\theoremstyle{thmstyleone}%
\newtheorem{theorem}{Theorem}%  meant for continuous numbers
\newtheorem{proposition}[theorem]{Proposition}% 
\theoremstyle{thmstyletwo}%
\newtheorem{remark}{Remark}%
\theoremstyle{thmstylethree}%
\newtheorem{definition}{Definition}%
\newtheorem{lemma}{Lemma}
\newtheorem{corollary}{Corollary}
\newcommand{\bn}{{\mathbf n}}
\newcommand{\bu}{{\mathbf u}}
\newcommand{\bv}{{\mathbf v}}
\newcommand{\bw}{{\mathbf w}}
\newcommand{\bff}{{\mathbf f}}
\newcommand{\bA}{{\mathbf A}}
\newcommand{\bB}{{\mathbf B}}
\newcommand{\bD}{{\mathbf D}}
\newcommand{\bF}{{\mathbf F}}
\newcommand{\bG}{{\mathbf G}}
\newcommand{\bI}{{\mathbf I}}
\newcommand{\bK}{{\mathbf K}}
\newcommand{\bL}{{\mathbf L}}
\newcommand{\bM}{{\mathbf M}}
\newcommand{\bN}{{\mathbf N}}
\newcommand{\bV}{{\mathbf V}}
\newcommand{\bS}{{\mathbf S}}
\newcommand{\bU}{{\mathbf U}}
\newcommand{\bP}{{\mathbf \Phi}}
\newcommand{\DV}{{\rm Div}\,}
\newcommand{\dv}{{\rm div}\,}
\newcommand{\BR}{{\mathbb R}}
\newcommand{\BC}{{\mathbb C}}
\newcommand{\BN}{{\mathbb N}}
\newcommand{\CA}{{\mathcal A}}
\newcommand{\CB}{{\mathcal B}}
\newcommand{\CC}{{\mathcal C}}
\newcommand{\CD}{{\mathcal D}}
\newcommand{\CE}{{\mathcal E}}
\newcommand{\CF}{{\mathcal F}}
\newcommand{\CL}{{\mathcal L}}
\newcommand{\CM}{{\mathcal M}}
\newcommand{\CN}{{\mathcal N}}
\newcommand{\CG}{{\mathcal G}}
\newcommand{\CR}{{\mathcal R}}
\newcommand{\CS}{{\mathcal S}}
\newcommand{\CT}{{\mathcal T}}
\newcommand{\CX}{{\mathcal X}}
\newcommand{\CY}{{\mathcal Y}}
\newcommand{\CZ}{{\mathcal Z}}
\newcommand{\R}{{\mathbb R}}
\newcommand{\C}{{\mathbb C}}
\newcommand{\N}{{\mathbb N}}
\newcommand{\fA}{{\mathfrak A}}
\newcommand{\fr}{{\mathfrak r}}
\newcommand{\fB}{{\mathfrak B}}
\newcommand{\fC}{{\mathfrak C}}
\newcommand{\fp}{{\mathfrak p}}
\newcommand{\fq}{{\mathfrak q}}
\newcommand{\fm}{{\mathfrak m}}
\newcommand{\fa}{{\mathfrak a}}
\newcommand{\bg}{{\mathbf g}}
\newcommand{\pd}{\partial}
\newcommand{\fl}{\mathfrak l}
\renewcommand{\Re}{\operatorname{Re}}
\renewcommand{\Im}{\operatorname{Im}}
\newcommand{\vp}{\varphi}
\newcommand{\X}{{\mathscr X}}
\begin{document}

\title[$\CR$-bounded operator families arising from a compressible fluid model of Korteweg type
with surface tension in the half-space]{$\CR$-bounded operator families arising from a compressible fluid model of Korteweg type
with surface tension in the half-space}

%%=============================================================%%
%% Prefix	-> \pfx{Dr}
%% GivenName	-> \fnm{Joergen W.}
%% Particle	-> \spfx{van der} -> surname prefix
%% FamilyName	-> \sur{Ploeg}
%% Suffix	-> \sfx{IV}
%% NatureName	-> \tanm{Poet Laureate} -> Title after name
%% Degrees	-> \dgr{MSc, PhD}
%% \author*[1,2]{\pfx{Dr} \fnm{Joergen W.} \spfx{van der} \sur{Ploeg} \sfx{IV} \tanm{Poet Laureate} 
%%                 \dgr{MSc, PhD}}\email{iauthor@gmail.com}
%%=============================================================%%

\author*[1]{\fnm{Sri} \sur{Maryani}}\email{sri.maryani@unsoed.ac.id}

%\author[2]{\fnm{Miho} \sur{Murata}}\email{murata.miho@shizuoka.ac.jp}
%\equalcont{These authors contributed equally to this work.}

\author[2]{\fnm{Miho} \sur{Murata}}\email{murata.miho@shizuoka.ac.jp}
\equalcont{These authors contributed equally to this work.}

\affil*[1]{\orgdiv{Department of Mathematics}, \orgname{Faculty of Mathematics and Natural Sciences, Jenderal Soedirman University}, \orgaddress{\street{Karangwangkal}, \city{Purwokerto}, \postcode{53123}, \state{Central Java}, \country{Indonesia}}}

\affil[2]{\orgdiv{Department of Mathematical and System Engineering}, \orgname{Faculty of Engineering, Shizuoka University}, \orgaddress{\street{3-5-1 Johoku, Naka-ku}, \city{Hamamatsu-shi}, \postcode{432-8561}, \state{Shizuoka}, \country{Japan}}}

%%==================================%%
%% sample for unstructured abstract %%
%%==================================%%

\abstract{In this paper, we consider a resolvent problem 
arising from the free boundary value problem for the compressible fluid model 
of Korteweg type, which is called as the {\it Navier-Stokes-Korteweg system}, with surface tension in the half-space.
The Navier-Stokes-Korteweg system is known as a diffuse interface model for liquid-vapor two-phase flows.
Our purpose is to show the $\CR$-boundedness for the solution operator families of the resolvent problem, which gives us the maximal regularity estimates 
in the $L_p$-in-time and $L_q$-in-space setting by applying the Weis's
operator valued Fourier multiplier theorem \cite{W}.}

\keywords{Free boundary value problem, Navier-Stokes-Korteweg system, Surface tension, $\CR$-bounded, Half-space}

%%\pacs[JEL Classification]{D8, H51}

\pacs[MSC Classification]{35Q35, 76N10}

\maketitle

\section{Introduction}\label{sec1}

\subsection{Model}
Let $\R^N_+$ and $\R^N_0$ be the upper half-space and  its boundary for $N \ge 2$, respectively; namely,
\begin{align*}
	\R^N_+&=\{x=(x', x_N) \mid x' \in \R^{N-1}, x_N>0\},\\
	\R^N_0&=\{x=(x', x_N) \mid x' \in \R^{N-1}, x_N=0\},
\end{align*}
where $x'=(x_1, \ldots, x_{N-1})$.
In this paper, we consider the following resolvent problem 
arising from a compressible fluid model of Korteweg type in the half-space with taking the surface tension in account.
\begin{equation}\label{r1}
\left\{
\begin{aligned}
	&\lambda \rho + \rho_* \dv \bu = d & \quad&\text{in $\R^N_+$}, \\
	&\rho_* \lambda \bu - \DV\{\bS(\bu) - (\gamma_* - \rho_* \kappa \Delta) \rho \bI\}=\bff& \quad&\text{in $\R^N_+$},\\
	&\{\bS(\bu) -(\gamma_* - \rho_* \kappa \Delta) \rho \bI\} \bn - \sigma \Delta' h \bn =\bg & \quad&\text{on $\R^N_0$},\\
	&\bn \cdot \nabla \rho = k & \quad&\text{on $\R^N_0$},\\
	&\lambda h - \bu \cdot \bn = \zeta & \quad&\text{on $\R^N_0$},
\end{aligned}
\right.
\end{equation}
where $\lambda$ is the resolvent parameter varying in the sectorial region
\[
	\Sigma_{\epsilon, \lambda_0} = \{\lambda \in \C \setminus\{0\} \mid |\arg \lambda| < \pi-\epsilon, |\lambda| \ge \lambda_0\}
\]
for $0<\epsilon<\pi/2$ and $\lambda_0 \ge 0$;
$\rho = \rho(x)$, $x=(x_1, \ldots, x_N)$
and  
$\bu = \bu(x) = (u_1(x), \ldots, u_N(x))^\mathsf{T} \footnote{$\bA^\mathsf{T}$denotes the transpose of $\bA$.}$
are unknown density field and velocity field, respectively; $h=h(x)$ is an unknown function on $\R^N_{0}$ obtained by linearization of the kinematic equation, 
$\Delta' h = \sum^{N-1}_{j=1} \pd_j ^2 h$; $\bn=(0, \ldots, 0, -1)^\mathsf{T}$ is the unit outer normal to $\R^N_0$;
$\rho_*, \kappa, \sigma>0$ and $\gamma_*\in \R$ are constants;
the right members $d=d(x)$, $\bff=\bff(x)= (f_1(x), \ldots, f_N(x))^\mathsf{T}$, $\bg=\bg(x)= (g_1(x), \ldots, g_N(x))^\mathsf{T}$, $k=k(x)$, and $\zeta=\zeta(x)$ are given functions.
The viscous stress tensor $\bS(\bu)$ is given by 
\begin{align*}
\bS (\bu) &= \mu \bD(\bu) + 
(\nu - \mu) \dv \bu \bI, 
\end{align*} 
where $\mu>0$ and $\nu>0$ are the viscosity coefficients,
$\bD(\bu)$ denotes 
the deformation tensor whose $(j, k)$ components are
 $D_{jk}(\bu) = \pd_ju_k
+ \pd_ku_j$ with $\pd_j
= \pd/\pd x_j$.
For any vector of functions $\bv = (v_1, \ldots, v_N)^\mathsf{T}$, 
we set $\dv \bv = \sum_{j=1}^N\pd_jv_j$ with $\pd_j=\pd/\pd x_j$,
and also for any $N\times N$ matrix field $\bL$ with $(j,k)^{\rm th}$ components $L_{jk}$, 
the quantity $\DV \bL$ is an 
$N$-vector with $j^{\rm th}$ component $\sum_{k=1}^N\pd_kL_{jk}$;
$\bI=(\delta_{ij})_{1\le i, j \le N}$ is the $N\times N$ identity matrix.

A diffuse interface model for liquid-vapor two-phase flows
was introduced by Korteweg \cite{K} based on Van der Waals's idea \cite{Wa}
and derived rigorously by Dunn and Serrin \cite{DS}.
There are many results for the following whole space problem:
\begin{equation}\label{comp}
\left\{
\begin{aligned}
	&\pd_t \rho + \dv (\rho \bu) = 0 & &\quad\text{in $\R^N$, $t>0$}, \\
	&\rho (\pd_t \bu + \bu \cdot \nabla \bu)  
	= \DV (\bS(\bu)+\bK(\rho) - P(\rho)\bI)& &\quad\text{in $\R^N$, $t>0$}, \\	
	&(\rho, \bu)|_{t=0} = (\rho_0, \bu_0)& &\quad\text{in $\R^N$},
\end{aligned}
\right.
\end{equation}
where $\pd_t = \pd/\pd t$, $t$ is the time variable.
Here  
$\bK(\rho)$ is the 
Korteweg stress tensor given by 
\begin{align*}
%\bS (\bu) &= \mu \bD(\bu) + 
%(\nu - \mu) \dv \bu \bI, \\
\bK (\rho) &= \frac{\kappa}{2} (\Delta \rho^2 -  |\nabla \rho|^2 )\bI 
- \kappa \nabla \rho \otimes \nabla \rho,
\end{align*} 
 where %$\mu$ and $\nu$ are the viscosity coefficients,
 $\kappa$ is the capillary coefficient;
$\nabla \rho \otimes \nabla \rho$ denotes an $N\times N$ matrix with $(j, k)^{\rm th}$
component $(\pd_j\rho)(\pd_k\rho)$
for $\nabla \rho = (\pd_1 \rho, \dots, \pd_N \rho)^\mathsf{T}$;
$P(\rho)$ is the pressure field 
satisfying a $C^\infty$ function defined on
$\rho\ge 0$;
$\rho_0$ and $\bu_0$ are given initial data. 

We refer to mathematical results on system \eqref{comp}.
Bresch, Desjardins, and Lin \cite{BDL} 
proved the existence of global weak solution,
 and then Haspot improved their result in \cite{H2011}.
Hattori and Li \cite{HL1, HL2} first showed
the local and global unique existence of the strong solution
in Sobolev space. 
Hou, Peng, and Zhu \cite{HPZ} improved the results \cite{HL1, HL2}
when the total energy is small. 
Danchin and Desjardins \cite{DD} proved the local and global existence and uniqueness of the solution 
in critical Besov space.
Chikami and Kobayashi \cite{CK} improved the result \cite{DD}.
In particular, in the case $P'(\rho_*) = 0$ for some constant $\rho_*$, 
they proved the global estimates
under an additional low frequency assumption
to control a pressure term.
For the whole space problem in the case $P'(\rho_*)=0$,
we also refer to Kobayashi and Tsuda \cite{KT}; Kobayashi and the second author \cite{KM}.
Large time behavior of solutions was established by 
Wang and Tan \cite{WT}, 
Tan and Wang \cite{TW}, 
Tan, Wang, and Xu \cite{TWX}, and 
Tan and Zhang \cite{TZ} 
in $L_2$-setting; the second author and Shibata \cite{MS} in $L_p$-$L_q$ setting.
For the optimality of decay estimates in the $L^p$ critical framework, we refer to Kawashima, Shibata, and Xu \cite{KSX}.

Regarding boundary value problems,
the system \eqref{comp} was studied in a domain $\Omega$ with the boundary condition:
\begin{equation}\label{bc}
	\bu=0, \quad \bn \cdot \nabla \rho=0 \quad\text{on $\Gamma$.}
\end{equation}
Kotschote \cite{K2008} proved the existence and uniqueness of strong solutions in both bounded and exterior domains locally in time in the $L_p$-setting.
He also considered a non-isothermal case for Newtonian and Non-Newtonian fluids in \cite{K2010, K2012}.
Moreover, he proved the global existence and exponential decay estimates of strong solutions in a bounded domain for small initial data in \cite{K2014}. 
Saito \cite{S2020} proved the existence of $\CR$-bounded solution operator families 
for the large resolvent parameter, and then he obtained the maximal $L_p$-$L_q$ regularity for the linearized problem in uniform $C^3$ domains and a generation of continuous analytic semigroup
associated with the linearized problem.
Concerning the resolvent estimates with small resolvent parameter,
Kobayashi, the second author, and Saito \cite{KMS} proved the resolvent estimate for $\lambda \in \overline{\C_+}=\{z \in \C \mid \Re z\ge 0\}$ in a bounded domain
under the condition that the pressure $P(\rho)$ satisfies not only $P'(\rho_*) \ge 0$
but also $P'(\rho_*)<0$, and then they obtained a global solvability.
Moreover, they proved the resolvent estimate for $\lambda \in \overline{\C_+}$ with $|\lambda| \ge \delta$ for any $\delta>0$ in an exterior domain if $P'(\rho_*) \ge 0$.

On the other hand, we are interested in a free boundary value problem with the surface tension;
namely, 
the domain $\Omega$ and the boundary condition \eqref{bc} are replaced by a time-dependent domain $\Omega_t$ and
%The system \eqref{r1} originates from the free boundary problem with the surface tension:
\begin{equation}\label{nsk}
\left\{
\begin{aligned}
	%&\pd_t \rho + \dv (\rho \bu) = 0 & &\quad\text{in $\Omega_t$, $t>0$}, \\
	%&\rho (\pd_t \bu + \bu \cdot \nabla \bu)  
	%= \DV (\bS(\bu)+\bK(\rho) - P(\rho)\bI)& &\quad\text{in $\Omega_t$, $t>0$}, \\
	&(\bS(\bu)+\bK(\rho)-P(\rho)\bI)\bn_t
	=-P(\rho_*)\bn_t+\sigma(H(\Gamma_t)-H(\Gamma_0))\bn_t& &\quad\text{on $\Gamma_t$, $t>0$}, \\
	&\bn_t \cdot \nabla \rho=0 & &\quad\text{on $\Gamma_t$, $t>0$}, \\	
	&V_{\Gamma_t}=\bu \cdot \bn_t & &\quad\text{on $\Gamma_t$, $t>0$},
	%&(\rho, \bu)|_{t=0} = (\rho_0, \bu_0)& &\quad\text{in $\Omega_0$},
\end{aligned}
\right.
\end{equation}
%where $\pd_t = \pd/\pd t$ , $t$ is the time variable; $\Omega_t$ is a time-dependent domain with boundary $\Gamma_t$, 
where %$\Omega_0$ is the given initial domain with boundary $\Gamma_0$;
$\bn_t$ is the unit outer normal;
$\sigma$ is the coefficient of the surface tension;
$H(\Gamma_t)$ is the $N-1$-fold mean curvature on $\Gamma_t$;
$\Gamma_0$ is the boundary of the given initial domain $\Omega_0$.
The third equation of \eqref{nsk} is called the kinematic equation, where
$V_{\Gamma_t}$ is the velocity of the evolution of free surface $\Gamma_t$ in the direction of $\bn_t$.
 
Since $\Omega_t$ is unknown, we transform $\Omega_t$ to the fixed domain $\Omega_0$ by the so-called {\it Hanzawa transform} \cite{H}, 
and then the system of the linearized equations in $\Omega$ is given by the following forms:
\begin{equation}\label{lnsk}
\left\{
\begin{aligned}
	&\pd_t \rho + \rho_*\dv \bu = d & &\quad\text{in $\Omega_0$, $t>0$}, \\
	&\rho_* \pd_t \bu - \DV (\bS(\bu)+ \kappa \rho_* \Delta \rho) + P'(\rho_*)\nabla \rho =\bff& &\quad\text{in $\Omega_0$, $t>0$}, \\
	&(\bS(\bu) + \kappa\rho_* \Delta \rho - P'(\rho_*) \rho \bI)\bn  - \sigma (\Delta_{\Gamma_0} + \bB) h \bn=\bg
	& &\quad\text{on $\Gamma_0$, $t>0$}, \\
	&\bn \cdot \nabla \rho=k & &\quad\text{on $\Gamma_0$, $t>0$}, \\	
	&\pd_t h -\bu \cdot \bn=\zeta& &\quad\text{on $\Gamma_0$, $t>0$}, \\	
	&(\rho, \bu)|_{t=0} = (\rho_0, \bu_0)& &\quad\text{in $\Omega_0$},
\end{aligned}
\right.
\end{equation}
where $\Delta_{\Gamma_0}$ is the Laplace-Beltrami operator on $\Gamma_0$; the right member $(d, \bff, \bg, k, \zeta)$, initial data $(\rho_0, \bu_0)$, and $\bB$ are given functions.
Since $\bB$ can be treated by the perturbation method, we consider the linearized problem \eqref{lnsk} excluding $\bB$, then
we have the resolvent problem \eqref{r1} by the Laplace transform if $\Omega_0=\R^N_+$ and $\Gamma_0=\R^N_0$.

Concerning a free boundary value problem, Saito \cite{S} considered the resolvent problem arising from a free boundary value problem without surface tension in $\R^N_+$;
namely, $\sigma=0$ in \eqref{nsk}.
He constructed $\CR$-bounded operator families satisfying the resolvent problems in $\R^N$ and $\R^N_+$. 

In this paper, we discuss the existence of the $\CR$-bounded operator families for the resolvent problem \eqref{r1}. 
Once we obtain $\CR$-boundedness for the solution operator families, we can consider the maximal $L_p$-$L_q$ regularity for the linearized problem
by the Weis's
operator valued Fourier multiplier theorem \cite{W},
which is the key estimate when we consider the local solvability for the nonlinear problem in the maximal $L_p$-$L_q$ regularity class.
Here we introduce
the definition of $\CR$-boundedness of operator families.

\begin{definition}\label{dfn2}
Let $X$ and $Y$ be Banach spaces, and let $\CL(X,Y)$ be the set of 
all bounded linear operators from $X$ into $Y$.
A family of operators $\CT \subset \CL(X,Y)$ is called $\CR$-bounded 
on $\CL(X,Y)$, if there exist constants $C > 0$ and $p \in [1,\infty)$ 
such that for any $n \in \BN$, $\{T_{j}\}_{j=1}^{n} \subset \CT$,
$\{f_{j}\}_{j=1}^{n} \subset X$ and sequences $\{r_{j}\}_{j=1}^{n}$
 of independent, symmetric, $\{-1,1\}$-valued random variables on $[0,1]$, 
we have  the inequality:
$$
\bigg \{ \int_{0}^{1} \|\sum_{j=1}^{n} r_{j}(u)T_{j}f_{j}\|_{Y}^{p}\,du
 \bigg \}^{1/p} \leq C\bigg\{\int^1_0
\|\sum_{j=1}^n r_j(u)f_j\|_X^p\,du\biggr\}^{1/p}.
$$ 
The smallest such $C$ is called $\CR$-bound of $\CT$, 
which is denoted by $\CR_{\CL(X,Y)}(\CT)$.
\end{definition}
Concerning $\CR$-boundedness, we introduce the following lemma proved by \cite[Proposition 3.4]{DHP}.

\begin{lemma}\label{lem:5.3}
$\thetag1$ 
Let $X$ and $Y$ be Banach spaces, 
and let $\CT$ and $\CS$ be $\CR$-bounded families in $\CL(X, Y)$. 
Then $\CT+\CS=\{T+S \mid T\in \CT, S\in \CS\}$ is also 
$\CR$-bounded family in $\CL(X, Y)$ and 
\[
\CR_{\CL(X, Y)}(\CT+\CS)\leq \CR_{\CL(X, Y)}(\CT)
+\CR_{\CL(X, Y)}(\CS).
\]
$\thetag2$
Let $X$, $Y$ and $Z$ be Banach spaces and 
let $\CT$ and $\CS$ be $\CR$-bounded families
 in $\CL(X, Y)$ and $\CL(Y, Z)$, respectively. 
Then $\CS\CT=\{ST \mid T\in \CT, S\in \CS\}$ is also 
an $\CR$-bounded family 
in $\CL(X, Z)$ and 
\[
\CR_{\CL(X, Z)}(\CS\CT)\leq \CR_{\CL(X, Y)}(\CT)\CR_{\CL(Y, Z)}(\CS). 
\]
\end{lemma}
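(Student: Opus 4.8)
The plan is to read both assertions off Definition \ref{dfn2} directly: $\thetag1$ from Minkowski's inequality in $L_p((0,1);Y)$, and $\thetag2$ by chaining the two defining estimates. One preliminary reduction is needed. Definition \ref{dfn2} only asserts the existence of \emph{some} exponent $p\in[1,\infty)$, whereas both arguments below want a single exponent shared by $\CT$ and $\CS$; I would first recall, via the Kahane--Khintchine inequality, that all the Rademacher-sum quantities in Definition \ref{dfn2} are mutually equivalent in $p$, so that $\CR$-boundedness and finiteness of the $\CR$-bound do not depend on $p$, and then fix one common $p$.

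For $\thetag1$, I would fix $n\in\BN$, operators $T_j+S_j$ with $T_j\in\CT$, $S_j\in\CS$ $(j=1,\dots,n)$, vectors $f_1,\dots,f_n\in X$, and a sequence $\{r_j\}_{j=1}^n$ of independent symmetric $\{-1,1\}$-valued random variables on $[0,1]$. Splitting
\[
\sum_{j=1}^n r_j(u)(T_j+S_j)f_j=\sum_{j=1}^n r_j(u)T_jf_j+\sum_{j=1}^n r_j(u)S_jf_j,
\]
taking $Y$-norms and the $L_p((0,1))$-norm, and using the triangle inequality in $L_p((0,1);Y)$ gives
\[
\Bigl\{\int_0^1\Bigl\|\sum_{j=1}^n r_j(u)(T_j+S_j)f_j\Bigr\|_Y^p\,du\Bigr\}^{1/p}\le\Bigl\{\int_0^1\Bigl\|\sum_{j=1}^n r_j(u)T_jf_j\Bigr\|_Y^p\,du\Bigr\}^{1/p}+\Bigl\{\int_0^1\Bigl\|\sum_{j=1}^n r_j(u)S_jf_j\Bigr\|_Y^p\,du\Bigr\}^{1/p}.
\]
Applying the $\CR$-boundedness of $\CT$ to the first summand and of $\CS$ to the second then bounds the right-hand side by $\bigl(\CR_{\CL(X,Y)}(\CT)+\CR_{\CL(X,Y)}(\CS)\bigr)\{\int_0^1\|\sum_j r_j(u)f_j\|_X^p\,du\}^{1/p}$, and since the data were arbitrary this is the desired estimate.

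For $\thetag2$, I would fix $n\in\BN$, operators $S_jT_j$ with $T_j\in\CT$, $S_j\in\CS$, vectors $f_1,\dots,f_n\in X$, and $\{r_j\}_{j=1}^n$ as above, and set $g_j:=T_jf_j\in Y$. Since $\{S_j\}_{j=1}^n\subset\CS$ and $\{g_j\}_{j=1}^n\subset Y$, the $\CR$-boundedness of $\CS$ in $\CL(Y,Z)$ gives
\[
\Bigl\{\int_0^1\Bigl\|\sum_{j=1}^n r_j(u)S_jT_jf_j\Bigr\|_Z^p\,du\Bigr\}^{1/p}\le\CR_{\CL(Y,Z)}(\CS)\Bigl\{\int_0^1\Bigl\|\sum_{j=1}^n r_j(u)T_jf_j\Bigr\|_Y^p\,du\Bigr\}^{1/p},
\]
and then the $\CR$-boundedness of $\CT$ in $\CL(X,Y)$ bounds the last factor by $\CR_{\CL(X,Y)}(\CT)\{\int_0^1\|\sum_j r_j(u)f_j\|_X^p\,du\}^{1/p}$; multiplying the two inequalities yields $\CR_{\CL(X,Z)}(\CS\CT)\le\CR_{\CL(X,Y)}(\CT)\,\CR_{\CL(Y,Z)}(\CS)$. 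I do not expect a serious obstacle here; the only point needing genuine care is the preliminary exponent reduction, since without the $p$-independence of $\CR$-boundedness the chaining in $\thetag2$ (and the single integral in $\thetag1$) would not be legitimate for families carrying different exponents — so I would state the Kahane--Khintchine step explicitly, or cite it, before the two displays above. Everything else is routine.
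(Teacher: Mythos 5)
The paper does not prove Lemma~\ref{lem:5.3} at all — it simply cites it as \cite[Proposition 3.4]{DHP} — so there is no in-paper argument to compare against. Your proof is the standard one from that source and is correct: $\thetag1$ is the triangle inequality in $L_p((0,1);Y)$, and $\thetag2$ is the chaining argument via $g_j:=T_jf_j$. One small remark on the exponent reduction: you are right that Definition~\ref{dfn2} as phrased ties $\CR$-boundedness to an unspecified $p$, and invoking Kahane--Khintchine does show the \emph{property} of $\CR$-boundedness is $p$-independent, so a common $p$ can be fixed. But if $\CT$ and $\CS$ came with genuinely different exponents, passing to a common one via Kahane--Khintchine would inflate the numerical $\CR$-bounds by Kahane constants, and the sharp conclusions $\CR(\CT+\CS)\le\CR(\CT)+\CR(\CS)$ and $\CR(\CS\CT)\le\CR(\CT)\CR(\CS)$ should therefore be read as holding for each \emph{fixed} $p$ with all three $\CR$-bounds computed with respect to that same $p$ — which is exactly what your two displays establish. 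So the Kahane--Khintchine step is best presented as justifying the choice of a common $p$ (and hence applicability of both hypotheses), not as a renormalization of the bounds themselves.
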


\subsection{Notation}
We summarize several symbols and functional spaces used 
throughout the paper.
%%%%%%%%%%%%%%
Let $\BN$, $\BR$ and $\BC$ denote the sets of 
all natural numbers, real numbers, and complex numbers, respectively. 
%We set $\BN_0=\BN \cup \{0\}$ and $\BR_+ = (0, \infty)$. 
We use boldface letters, e.g. $\bu$ to 
denote vector-valued functions. 

For scalar function $f$ and $N$-vector functions $\bg$, we set
\begin{align*}
\nabla f &= (\pd_1f,\ldots,\pd_Nf)^\mathsf{T},&
\enskip 
\nabla^2 f &= (\pd_i\pd_j f)_{1\le i, j\le N},\\
\nabla^3 f &= \{\pd_i \pd_j \pd_k f \mid i, j, k = 1,\ldots, N \},\\ %,& \enskip
\nabla \bg &= (\pd_i g_j)_{1\le i, j\le N},&
\enskip
\nabla^2 \bg &= \{\pd_i \pd_j g_k \mid i, j, k = 1,\ldots, N \},&
\end{align*} 
where $\pd_i = \pd/\pd x_i$.

Let $\N_0=\N \cup \{0\}$.
For multi-index $\alpha'=(\alpha_1, \ldots, \alpha_{N-1}) \in \N_0^{N-1}$ and scalar function $f=f(\xi_1, \ldots, \xi_{N-1})$,
\[
	\pd^{\alpha'}_{\xi'} f = \frac{\pd^{|\alpha'|}}{\pd \xi_1^{\alpha_1} \cdots \pd \xi_{N-1}^{\alpha_{N-1}}}f,
\quad |\alpha'|=\alpha_1 + \cdots +\alpha_{N-1}.
\]

For complex valued functions $f=f(x)$ and $g=g(x)$;
$N$-vector functions $\bff=(f_1(x), \ldots, f_N(x))$ and $\bg=(g_1(x), \ldots, g_N(x))$, 
the inner products $(f, g)_{\R^N_+}$, $(f, g)_{\R^N_0}$, $(\bff, \bg)_{\R^N_+}$, and 
$(\bff, \bg)_{\R^N_0}$ are defined by
\begin{align*}
(f, g)_{\R^N_+}&=\int_{\R^N_+} f(x) \overline{g(x)}\,dx, &
(f, g)_{\R^N_0}&=\int_{\R^N_0} f(x) \overline{g(x)}\,dx',\\
(\bff, \bg)_{\R^N_+}&=\sum^N_{j=1} (f_j, g_j)_{\R^N_+}, &
(\bff, \bg)_{\R^N_0}&=\sum^N_{j=1} (f_j, g_j)_{\R^N_0},
\end{align*}
where $\overline{g(x)}$ is the complex conjugate of $g(x)$.

For Banach spaces $X$ and $Y$, $\CL(X,Y)$ denotes the set of 
all bounded linear operators from $X$ into $Y$,
$\CL(X)$ is the abbreviation of $\CL(X, X)$, and 
$\rm{Hol}\,(U, \CL(X,Y))$ denotes
 the set of all $\CL(X,Y)$ valued holomorphic 
functions defined on a domain $U$ in $\BC$. 

For any $1 < q < \infty$, $m \in \N$, 
$L_q(\BR^N_+)$ and $H_q^m(\BR^N_+)$ 
denote the usual Lebesgue space and Sobolev space;
while $\|\cdot\|_{L_q(\R^N_+)}$, $\|\cdot\|_{H_q^m(\R^N_+)}$
denote their norms, respectively; 
$W^{m+s}_q(\R^N_0) = (H^m_q(\R^N_0), H^{m+1}_q(\R^N_0))_{s, q}$ for $m \in \N_0$ and $0<s<1$, where $(\cdot, \cdot)_{s, q}$ denotes the real interpolation functor;
$C^\infty((a, b))$ denotes the set of all $C^\infty$ functions defined on $(a, b)$. 
%$L_p((a, b), X)$ and $H_p^m((a, b), X)$ 
%denote the usual Lebesgue space and Sobolev space of 
%$X$-valued function defined on an interval $(a,b)$, respectively.
The $d$-product space of $X$ is defined by 
$X^d=\{f=(f, \ldots, f_d) \mid f_i \in X \, (i=1,\ldots,d)\}$,
while its norm is denoted by 
$\|\cdot\|_X$ instead of $\|\cdot\|_{X^d}$ for the sake of 
simplicity. 

For the Banach space X, we also denote the usual Lebesgue space and Sobolev space of $X$-valued functions 
defined on time interval $I$ by $L_p(I, X)$ and $H^m_p(I, X)$ with $m \in \N$; 
while $\|\cdot\|_{L_p(I, X)}$, $\|\cdot\|_{H_p^m(I, X)}$
denote their norms, respectively.

Let $\gamma \ge 1$.
Set
\begin{align*}
L_{p, \gamma}(\R, X) & = \{f(t) \in L_{p, {\rm loc}}
(\R, X) \mid e^{-\gamma t}f(t) \in L_p(\R, X)\}, \\
L_{p, \gamma, 0}(\R, X) & = \{f(t) \in L_{p, \gamma}(\R, X) 
\mid
f(t) = 0 \enskip (t < 0)\}, \\
H^m_{p, \gamma}(\R, X) & = \{f(t) \in
 L_{p, \gamma}(\R, X) \mid 
e^{-\gamma t}\pd_t^j f(t) \in L_p(I, X)
\enskip (j=1, \ldots, m)\}, \\
H^m_{p, \gamma, 0}(\R, X) & = H^m_{p, \gamma}(\R, X)
\cap L_{p, \gamma, 0}(\R, X).
\end{align*}
Let $\CL$ and $\CL^{-1}$ denote the Laplace transform and the
Laplace inverse transform, respectively, which are defined by 
\[
\CL[f](\lambda) = \int_{\R} e^{-\lambda t} f(t) \,dt, \quad
\CL^{-1}[g](t) = \frac{1}{2\pi}
\int_{\R} e^{\lambda t} g(\tau) \,d\tau
\]
with $\lambda = \gamma + i\tau \in \C$.  Given $s \ge 0$
and $X$-valued function $f(t)$, we set 
\[
(\Lambda^s_\gamma f)(t) = \CL^{-1} [\lambda^s
\CL[f](\lambda)](t).
\]
The Bessel potential space of $X$-valued functions 
of order $s$ is defined by the following:
\begin{align*}
H^s_{p, \gamma}(\R, X) & = \{f \in L_p(\R, X) \mid 
e^{-\gamma t} (\Lambda_\gamma^s f)(t) \in L_p(\R, X)\}, \\
H^s_{p, \gamma, 0}(\R, X) & = \{f \in H^s_{p, \gamma}
(\R, X) \mid f(t) = 0 \enskip(t < 0)\}.
\end{align*}

The letter $C$ denotes generic constants and the constant 
$C_{a,b,\ldots}$ depends on $a,b,\ldots$. 
The values of constants $C$ and $C_{a,b,\ldots}$ 
may change from line to line.

This paper is organized as follows.
In the next section, we state the main theorem concerning the $\CR$-bounded operator families arising from the free boundary value problem for the compressible fluid model of Korteweg type with surface tension. 
Section \ref{sec:idea} and Section \ref{sec:rbdd}
prove the main theorem. 
As preliminaries, we study the reduced resolvent problem by using the result for the Navier-Stokes-Korteweg system without surface tension.
Section \ref{general} proves the existence of the $\CR$-bounded operator families
for the system containing lower-order derivatives.
As an application of the main theorem, Section \ref{mr} proves the maximal $L_p$-$L_q$ regularity for the linearized equations.

\section{Main theorem}\label{sec:main}
Let us consider the following rescaled problem:
\begin{equation}\label{r2}
\left\{
\begin{aligned}
	&\lambda \rho + \dv \bu = d & \quad&\text{in $\R^N_+$}, \\
	&\lambda \bu - \DV\{\bS(\bu) - (\gamma_* - \kappa \Delta) \rho\bI\}=\bff& \quad&\text{in $\R^N_+$},\\
	&\{\bS(\bu) -(\gamma_* - \kappa \Delta) \rho \bI\} \bn
	 - \sigma \Delta' h \bn
	=\bg & \quad&\text{on $\R^N_0$},\\
	&\bn \cdot \nabla \rho = k & \quad&\text{on $\R^N_0$},\\
	&\lambda h - \bu \cdot \bn = \zeta & \quad&\text{on $\R^N_0$},
\end{aligned}
\right.
\end{equation}
where we have
replaced in \eqref{r1} $(\rho, \mu, \nu, \kappa, \sigma)$ with 
$(\rho_* \rho, \rho_* \mu, \rho_* \nu, \kappa/\rho_*, \rho_* \sigma)$.
%To prove the existence of $\CR$-bounded operator families for \eqref{r2},
Hereafter, we mainly consider the system \eqref{r2} in the case $\gamma_* = 0$; namely,
\begin{equation}\label{main prob}
\left\{
\begin{aligned}
	&\lambda \rho + \dv \bu = d & \quad&\text{in $\R^N_+$}, \\
	&\lambda \bu - \DV(\bS(\bu) + \kappa \Delta \rho\bI)=\bff& \quad&\text{in $\R^N_+$},\\
	&(\bS(\bu) + \kappa \Delta \rho \bI) \bn
	 - \sigma \Delta' h \bn
	=\bg & \quad&\text{on $\R^N_0$},\\
	&\bn \cdot \nabla \rho = k & \quad&\text{on $\R^N_0$},\\
	&\lambda h - \bu \cdot \bn = \zeta & \quad&\text{on $\R^N_0$}.
\end{aligned}
\right.
\end{equation}
Note that the result for \eqref{r2} in the case $\gamma_* \in \R$ can be obtained by 
the result in the case $\gamma_*=0$.
We discuss it in more detail in Sec. \ref{general} below.

Let $\bF=(d, \bff, \bg, k, \zeta)$.
To state the main theorem, we define a function space for the right member $\bF$ as 
\[
	X_q(\R^N_+) = H^1_q(\R^N_+) \times L_q(\R^N_+)^N \times H^1_q(\R^N_+) ^N
	\times H^2_q(\R^N_+) \times W^{2-1/q}_q(\R^N_0).
\]
For $\bF \in X_q(\R^n_+)$, we set
\begin{align*}
	\CX_q(\R^N_+) &=H^1_q(\R^N_+) \times L_q(\R^N_+)^\CN \times W^{2-1/q}_q(\R^N_0),
	\quad \CN = N+N^2+N+N^2+N+1,\\
	\CF_\lambda \bF &=(d, \bff, \nabla \bg, \lambda^{1/2} \bg, \nabla^2 k, \nabla \lambda^{1/2}k, \lambda k, \zeta) \in \CX_q(\R^N_+).
\end{align*}
Moreover, we define symbols for the solution $(\rho, \bu, h)$ of \eqref{main prob} as
\begin{align*}
	\fA_q(\R^N_+) &= L_q(\R^N_+)^{N^3+N^2} \times H^1_q(\R^N_+),
	&\CR_\lambda \rho &=(\nabla^3 \rho, \lambda^{1/2}\nabla^2\rho, \lambda \rho),\\
	\fB_q(\R^N_+) &= L_q(\R^N_+)^{N^3+N^2+N},
	&\CS_\lambda \bu &=(\nabla^2 \bu, \lambda^{1/2}\nabla \bu, \lambda \bu),\\
	\fC_q(\R^N_0) &= W^{3-1/q}_q(\R^N_0) \times W^{2-1/q}_q(\R^N_0),
	&\CT_\lambda h &=(h, \lambda h).
\end{align*}
To describe the sectorial region, we also set an angle due to analysis for the whole space problem (cf. \cite[Sec. 2]{S}). 
Let $\alpha$ be a constant given by
\[
	\alpha = \left(\frac{\mu + \nu}{2\kappa}\right)^2 - \frac{1}{\kappa},
\]
and let $\tilde \epsilon_* \in [0, \pi/2)$ be an angle defined as
\begin{equation}\label{angle}
	\tilde \epsilon_* =
\left\{
\begin{aligned}
&0&(\alpha \ge 0),\\
&\arg \left(\frac{\mu+\nu}{2\kappa} + i\sqrt{|\alpha|}\right) &(\alpha <0).
\end{aligned}
\right. 
\end{equation}
Our main result for the system \eqref{main prob} is as follows.

\begin{theorem}\label{main}
Let $1<q<\infty$.
Assume that $\mu$, $\nu$, $\kappa$, and $\sigma$
are positive constants satisfying 
\begin{equation}\label{condi}
\alpha \neq 0, \quad \kappa \neq \mu \nu.
\end{equation}
Then there is a constant $\epsilon_* \in (\tilde \epsilon_*, \pi/2)$ such that for any $\epsilon \in (\epsilon_*, \pi/2)$
there exists a constant $\lambda_0 \ge 1$ such that the following assertions hold true: 

$\thetag1$ 
For any $\lambda \in \Sigma_{\epsilon, \lambda_0}$ there exist operator families 
\begin{align*}
&\CA_0 (\lambda) \in 
{\rm Hol} (\Sigma_{\epsilon, \lambda_0}, 
\CL(\CX_q(\R^N_+), H^3_q(\R^N_+)))\\
&\CB_0 (\lambda) \in 
{\rm Hol} (\Sigma_{\epsilon, \lambda_0}, 
\CL(\CX_q(\R^N_+), H^2_q(\R^N_+)^N)),\\
&\CC_0 (\lambda) \in 
{\rm Hol} (\Sigma_{\epsilon, \lambda_0}, 
\CL(\CX_q(\R^N_+), W^{3-1/q}_q(\R^N_0)))
\end{align*}
such that 
for any $\bF=(d, \bff, \bg, k, \zeta) \in X_q(\R^N_+)$, 
\begin{equation*}
\rho = \CA_0 (\lambda) \CF_\lambda \bF, \quad
\bu = \CB_0 (\lambda) \CF_\lambda \bF, \quad
h = \CC_0 (\lambda) \CF_\lambda \bF
\end{equation*}
are unique solutions of the problem \eqref{main prob}.

$\thetag2$ 
There exists a positive constant $r$ such that
\begin{equation} \label{rbdd1}
\begin{aligned}
&\CR_{\CL(\CX_q(\R^N_+), \fA_q(\R^N_+))}
(\{(\tau \pd_\tau)^n \CR_\lambda \CA_0 (\lambda) \mid 
\lambda \in \Sigma_{\epsilon, \lambda_0}\}) 
\leq r,\\
&\CR_{\CL(\CX_q(\R^N_+), \fB_q(\R^N_+))}
(\{(\tau \pd_\tau)^n \CS_\lambda \CB_0 (\lambda) \mid 
\lambda \in \Sigma_{\epsilon, \lambda_0}\}) 
\leq r,\\
&\CR_{\CL(\CX_q(\R^N_+), \fC_q(\R^N_0))}
(\{(\tau \pd_\tau)^n \CT_\lambda \CC_0 (\lambda) \mid 
\lambda \in \Sigma_{\epsilon, \lambda_0}\}) 
\leq r
\end{aligned}
\end{equation}
for $n = 0, 1$.
Here above constants $\lambda_0$ and $r$ depend solely on $N$, $q$, $\epsilon$,
$\mu$, $\nu$, $\kappa$, and $\sigma$. 
\end{theorem}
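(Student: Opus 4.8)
The plan is to solve the resolvent problem \eqref{main prob} explicitly via partial Fourier transform in the tangential variables $x' \in \R^{N-1}$, reducing it to a system of ordinary differential equations in $x_N > 0$ with parameter $\xi' \in \R^{N-1}$ and $\lambda \in \Sigma_{\epsilon,\lambda_0}$, and then to extract the solution operators and prove their $\CR$-boundedness by applying the multiplier lemmas developed in the literature (e.g., the classes of symbols to which the Fourier-multiplier $\CR$-boundedness theorem of Shibata applies). First I would exploit the structure of the system: combine the first (mass) equation $\rho = \lambda^{-1}(d - \dv \bu)$ with the momentum equation to eliminate $\rho$ — but because the Korteweg term $\kappa \Delta \rho \bI$ contributes $\kappa \lambda^{-1}\Delta \dv \bu$, this yields a higher-order (essentially fourth-order in $\bu$ when counting the divergence) system for $\bu$ alone, together with boundary conditions coming from the third, fourth, and fifth equations of \eqref{main prob}. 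The kinematic equation $\lambda h - \bu\cdot\bn = \zeta$ lets me solve $h = \lambda^{-1}(\zeta + \bu\cdot\bn)$ and substitute $\sigma\Delta' h$ into the stress boundary condition, so $h$ is slaved to $\bu$ and $\zeta$; the surface-tension term then becomes $-\sigma\lambda^{-1}\Delta'(\bu\cdot\bn + \zeta)\bn$ in the boundary data, which is the source of the critical $\lambda^{1/2}\bg$ and related weights appearing in $\CF_\lambda\bF$.

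The next step is to reduce to the case of homogeneous interior data. Using the already-available result for the Navier–Stokes–Korteweg system without surface tension (Saito \cite{S}, referenced as the whole-space/half-space problem with $\sigma=0$), I would first solve an auxiliary whole-space problem to absorb $d$ and $\bff$, obtaining a remainder with zero interior right-hand side but modified boundary data $(\bg', k', \zeta')$ lying in the appropriate trace spaces with the correct $\lambda$-dependent norms; the $\CR$-boundedness of that auxiliary solution operator is inherited from \cite{S} together with Lemma \ref{lem:5.3}. Then the genuinely new work is the reduced boundary-value problem: find $(\rho,\bu,h)$ solving \eqref{main prob} with $d=\bff=0$. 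After tangential Fourier transform, the ODE system has a characteristic equation whose roots are $|\xi'|$ (from the part of $\bu$ that is, roughly, solenoidal and governed by the viscous operator) and the roots of a polynomial in $x_N$-frequency governed by the combined Korteweg–pressure–viscosity operator; the constant $\alpha$ and the angle $\tilde\epsilon_*$ defined in \eqref{angle} are precisely what guarantee that these "acoustic/capillary" roots have strictly positive real part for $\lambda \in \Sigma_{\epsilon,\lambda_0}$ with $\epsilon$ close enough to $\pi/2$, so that bounded (decaying) solutions on the half-line exist and are unique. I would write the solution as a superposition of $e^{-|\xi'|x_N}$ and the exponentials $e^{-B_j(\lambda,\xi')x_N}$, determine the coefficient vector by solving the linear system coming from the four scalar boundary conditions (the normal and tangential components of the stress condition, the Neumann condition $\bn\cdot\nabla\rho = k$, and — after elimination — the kinematic condition), and check that the associated Lopatinskii-type determinant, call it the boundary symbol $\mathcal{L}(\lambda,\xi')$, is nonvanishing and bounded below by a suitable power of $(|\lambda|^{1/2}+|\xi'|)$; the condition $\kappa \neq \mu\nu$ together with $\alpha \neq 0$ in \eqref{condi} is what I expect to be needed to keep this determinant from degenerating.

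Once the solution formulas are in hand, the $\CR$-boundedness assertion $\thetag2$ follows by the standard machinery: each entry of the solution operator applied to $\CF_\lambda\bF$ is, after the inverse tangential Fourier transform, an operator of the form $\bv \mapsto \CF^{-1}_{\xi'}[ m(\lambda,\xi',x_N)\, \widehat{\bv}(\xi')]$ (or an analogous one with an $x_N$-integration against a Poisson-type or Volterra-type kernel), where the symbol $m$, together with all the weights $(\tau\pd_\tau)^n$, belongs to one of the symbol classes $\mathbf{M}_{-2,s}$, $\mathbf{M}_{-1,s}$, etc., for which $\CR$-boundedness of the corresponding operator family on $L_q(\R^N_+)$ is known (the Volterra-kernel and multiplier lemmas of Shibata, which I would cite). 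The bookkeeping is in verifying, term by term, that applying $\nabla^3$, $\lambda^{1/2}\nabla^2$, $\lambda$ to $\rho$ (and the analogous derivatives to $\bu$, and $(h,\lambda h)$ to $h$) against the input weights $(d,\bff,\nabla\bg,\lambda^{1/2}\bg,\nabla^2 k,\nabla\lambda^{1/2}k,\lambda k,\zeta)$ produces symbols of order zero in the relevant sense, using the lower bound on $\mathcal{L}(\lambda,\xi')$ and the strict positivity of $\Re B_j$. The holomorphy in $\thetag1$ is immediate from the explicit formulas since $\mathcal{L}$ is nonvanishing and analytic on $\Sigma_{\epsilon,\lambda_0}$, and uniqueness follows from the same ODE analysis (a homogeneous solution must vanish because the bounded solution of the ODE system is unique once $\mathcal{L}\neq 0$). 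The main obstacle, and the step that will require the most care, is the analysis of the boundary symbol $\mathcal{L}(\lambda,\xi')$: establishing the uniform lower bound $|\mathcal{L}(\lambda,\xi')| \gtrsim (|\lambda|^{1/2}+|\xi'|)^{\ell}$ on the sector — in particular identifying the precise role of the algebraic conditions \eqref{condi} and of the threshold angle $\tilde\epsilon_*$ in \eqref{angle}, and handling the surface-tension contribution $\sigma\lambda^{-1}|\xi'|^2$ which couples $h$ back into the stress condition — is where the new content of the theorem lies, the rest being a (lengthy but routine) verification that all resulting operator families fall under the established $\CR$-boundedness lemmas.
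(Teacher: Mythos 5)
Your proposal follows the same high-level strategy as the paper: reduce by tangential Fourier transform, exhibit explicit solution formulas and a Lopatinskii-type determinant, establish its lower bound, and then invoke the standard multiplier $\CR$-boundedness lemmas (Shibata's Volterra/Poisson-kernel lemmas), using Saito's $\sigma=0$ result \cite{S} to dispose of part of the data. Two points differ from the paper, and one deserves caution. On the decomposition, the paper does not merely absorb the interior data $d,\bff$ by an auxiliary whole-space solve; it invokes the half-space $\sigma=0$ theorem (Proposition \ref{without}) to handle the full quadruple $(d,\bff,\bg,k)$ at once, leaving a reduced problem \eqref{r7} whose only nontrivial datum is the scalar $\eta=\zeta+\bv\cdot\bn$ in the kinematic equation. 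This is much lighter than your plan of carrying all four boundary conditions forward: in the paper the Fourier-side unknown boils down to the single scalar $\hat h(0)$, since $\hat\rho,\hat u_j,\hat u_N$ are expressed in terms of $\hat h(0)$ by formulas already computed in \cite{S}, and the new determinant $\bM(\xi',\lambda)$ in \eqref{m} is a scalar rather than the determinant of a coefficient matrix. On uniqueness, you assert it is immediate from the ODE uniqueness once the determinant is nonvanishing, but for $q\ne 2$ the tangential Fourier transform of an $L_q$ function is only a tempered distribution, so passing from a homogeneous solution in $H^3_q\times H^2_q\times W^{3-1/q}_q$ to a pointwise-in-$\xi'$ decaying ODE solution needs a nontrivial justification. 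The paper sidesteps this entirely by a duality argument: solve the adjoint problem with arbitrary $\bP\in C^\infty_0(\R^N_+)^N$, integrate by parts using the boundary conditions and $s_1s_2=\kappa^{-1}$, and deduce $(\bu,\bP)_{\R^N_+}=0$, whence $\bu=0$, $h=0$, $\rho=0$. Your outline is plausible but would need this gap filled; the paper's route is tighter on uniqueness and substantially reduces the bookkeeping in the Lopatinskii analysis.
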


Theorem \ref{main} can be proved by the following theorem, which will be discussed in Sec. \ref{sec:idea} and Sec. \ref{sec:rbdd} below.

\begin{theorem}\label{main'}
Let $1<q<\infty$.
Assume that $\mu$, $\nu$, $\kappa$, and $\sigma$ are positive constants satisfying \eqref{condi}.
Let $\epsilon \in (\epsilon_*, \pi/2)$ for $\epsilon_*$ given in Theorem \ref{main}. 
Set 
\begin{align*}
	Y_q(\R^N_+) &=  H^1_q(\R^N_+) \times L_q(\R^N_+)^N \times H^1_q(\R^N_+) ^N
	\times H^2_q(\R^N_+) \times H^2_q(\R^N_+),\\
	\CY_q(\R^N_+) &=H^1_q(\R^N_+) \times L_q(\R^N_+)^\CN \times H^2_q(\R^N_+),
	\quad \CN = N+N^2+N+N^2+N+1,\\
	\tilde \fC_q(\R^N_+) &= H^3_q(\R^N_+) \times H^2_q(\R^N_+).
\end{align*}
Then there exists a constant $\lambda_0 \ge 1$ such that the following assertions hold true:

$\thetag1$ 
For any $\lambda \in \Sigma_{\epsilon, \lambda_0}$ there exist operator families 
\begin{align*}
&\CA (\lambda) \in 
{\rm Hol} (\Sigma_{\epsilon, \lambda_0}, 
\CL(\CY_q(\R^N_+), H^3_q(\R^N_+)))\\
&\CB (\lambda) \in 
{\rm Hol} (\Sigma_{\epsilon, \lambda_0}, 
\CL(\CY_q(\R^N_+), H^2_q(\R^N_+)^N)),\\
&\CC (\lambda) \in 
{\rm Hol} (\Sigma_{\epsilon, \lambda_0}, 
\CL(\CY_q(\R^N_+), H^3_q(\R^N_+)))
\end{align*}
such that 
for any $\bF=(d, \bff, \bg, k, \zeta) \in Y_q(\R^N_+)$, 
\begin{equation*}
\rho = \CA (\lambda) \CF_\lambda \bF, \quad
\bu = \CB (\lambda) \CF_\lambda \bF, \quad
h = \CC (\lambda) \CF_\lambda \bF
\end{equation*}
are solutions of problem \eqref{main prob}.

$\thetag2$ 
There exists a positive constant $r$ such that
\[
\begin{aligned}
&\CR_{\CL(\CY_q(\R^N_+), \fA_q(\R^N_+))}
(\{(\tau \pd_\tau)^n \CR_\lambda \CA (\lambda) \mid 
\lambda \in \Sigma_{\epsilon, \lambda_0}\}) 
\leq r,\\
&\CR_{\CL(\CY_q(\R^N_+), \fB_q(\R^N_+))}
(\{(\tau \pd_\tau)^n \CS_\lambda \CB (\lambda) \mid 
\lambda \in \Sigma_{\epsilon, \lambda_0}\}) 
\leq r,\\
&\CR_{\CL(\CY_q(\R^N_+), \tilde \fC_q(\R^N_+))}
(\{(\tau \pd_\tau)^n \CT_\lambda \CC (\lambda) \mid 
\lambda \in \Sigma_{\epsilon, \lambda_0}\}) 
\leq r
\end{aligned}
\]
for $n = 0, 1$.
Here, above constants $\lambda_0$ and $r$ depend solely on $N$, $q$, $\epsilon$,  
$\mu$, $\nu$, $\kappa$, and $\sigma$. 
\end{theorem}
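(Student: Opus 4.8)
The plan is to reduce the boundary-value problem \eqref{main prob} to a half-space problem that has already been analyzed without surface tension, treat the surface-tension term $\sigma\Delta'h\,\bn$ and the kinematic equation for $h$ as a coupled perturbation, and then solve the resulting reduced system by partial Fourier transform in the tangential variables $x'$, constructing the solution operators explicitly as Fourier multipliers whose symbols we then show to be of the Bell class (i.e.\ satisfy the Mikhlin-type derivative bounds uniformly on $\Sigma_{\epsilon,\lambda_0}$).

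First I would split $\bF=(d,\bff,\bg,k,\zeta)$ and, using the divergence equation $\lambda\rho+\dv\bu=d$ to express $\rho$ in terms of $\bu$ and $d$, recast the system as a problem for $(\bu,h)$ with a fourth-order-in-$\rho$ structure inherited from the Korteweg term $\kappa\Delta\rho\,\bI$. The starting point is Saito's result \cite{S} for the Navier--Stokes--Korteweg system \emph{without} surface tension in $\R^N_+$: invoking that, one obtains $\CR$-bounded solution operators for the problem \eqref{main prob} with $\sigma=0$ and with the kinematic equation decoupled (so $h$ is recovered afterwards from $\lambda h=\bu\cdot\bn+\zeta$ on $\R^N_0$). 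The genuinely new contribution is the surface-tension term: write the solution of \eqref{main prob} as the $\sigma=0$ solution plus a correction driven by the extra boundary data $\sigma\Delta'h\,\bn$, which produces a fixed-point/Neumann-series coupling between $h$ and the trace of the stress tensor on $\R^N_0$.

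Concretely, after tangential Fourier transform $x'\mapsto\xi'$ the system becomes a system of ODEs in $x_N$ with parameter $(\lambda,\xi')$; for $\rho$ it is a fourth-order ODE (because of $\Delta^2$-type structure coming from applying $\Delta$ to $\rho$ inside a second-order operator), for each $u_j$ a second-order ODE, coupled through the boundary conditions on $x_N=0$. The bounded solutions on $x_N>0$ are spanned by exponentials $e^{-Ax_N}$, $e^{-Bx_N}$, $e^{-|\xi'|x_N}$ where $A=A(\lambda,\xi')$, $B=B(\lambda,\xi')$ are the relevant characteristic roots (the angle $\tilde\epsilon_*$ in \eqref{angle} and the conditions $\alpha\neq0$, $\kappa\neq\mu\nu$ guarantee these roots are distinct and have positive real part, and that the associated Lopatinski--Shapiro determinant does not vanish); then I impose the five scalar boundary conditions of \eqref{main prob}, obtain a linear system for the coefficient functions, invert the coefficient matrix (Cramer's rule), and read off the symbols of $\CA(\lambda),\CB(\lambda),\CC(\lambda)$. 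The solution operators are then defined by $\CF^{-1}_{\xi'}[\,m(\lambda,\xi',x_N)\,\CF_{x'}[\cdot]\,]$ where the multipliers $m$ are products of the inverse Lopatinski determinant, the exponentials, and polynomial factors in $\xi'$; the $\CR$-boundedness of the families $\{(\tau\pd_\tau)^n\CR_\lambda\CA(\lambda)\}$ etc.\ follows from the standard operator-valued Fourier multiplier / Bell-class machinery (e.g.\ the lemmas of Shibata--Shimizu type) once one verifies that each symbol, together with its $\tau\pd_\tau$-derivative, satisfies bounds of the form $|\pd_{\xi'}^{\alpha'}m|\le C_{\alpha'}|\xi'|^{-|\alpha'|}$ uniformly for $\lambda\in\Sigma_{\epsilon,\lambda_0}$, with the correct weights $\lambda,\lambda^{1/2}$ matching the target spaces $\fA_q,\fB_q,\tilde\fC_q$; Lemma \ref{lem:5.3} is used to assemble the composite operators.

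The main obstacle I anticipate is the uniform invertibility of the boundary symbol matrix (the Lopatinski determinant) and the extraction of sharp decay estimates for its inverse on the whole sector $\Sigma_{\epsilon,\lambda_0}$: because of the surface tension the determinant acquires a term of order $\sigma|\xi'|^2$ competing with the Korteweg/viscous contributions of order involving $\lambda$, $|\xi'|$, and the roots $A,B$, so one must carefully track the behaviour in the three regimes $|\xi'|^2\lesssim|\lambda|$, $|\xi'|^2\sim|\lambda|$, and $|\xi'|^2\gtrsim|\lambda|$ and show no cancellation occurs — this is precisely where the hypotheses $\alpha\neq0$ and $\kappa\neq\mu\nu$ and the enlargement of the sector angle from $\tilde\epsilon_*$ to $\epsilon_*$ (and the lower bound $\lambda_0\ge1$) enter. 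Once the determinant is bounded below by an explicit positive combination of $|\lambda|$, $|\xi'|$, and $|A|,|B|$ in each regime, the remaining work is the (lengthy but routine) verification of the Bell-class estimates for every entry of the inverse matrix and every resulting multiplier, after which part $\thetag1$ (existence of the holomorphic operator families) and part $\thetag2$ ($\CR$-bounds) both follow; uniqueness in the stronger spaces of Theorem \ref{main} is then obtained separately by an energy/duality argument on the homogeneous problem.
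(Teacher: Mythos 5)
Your overall plan is close to the paper's: split off Saito's result for the $\sigma=0$ Navier--Stokes--Korteweg half-space problem, solve the remaining surface-tension-coupled problem explicitly by tangential Fourier transform, and then run the Bell-class / Shibata--Shimizu multiplier machinery. You also correctly flag the decisive technical issue, namely the uniform lower bound on the Lopatinski-type determinant on $\Sigma_{\epsilon,\lambda_0}$, and the role of the conditions $\alpha\neq 0$, $\kappa\neq\mu\nu$, the enlarged angle $\epsilon_*$, and $\lambda_0\ge 1$. However, I see two genuine gaps.

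First, the ``fixed-point/Neumann-series coupling'' between $h$ and the boundary trace is not how the paper proceeds and is in fact a dead end. The operator $\sigma\Delta'$ on the boundary is second order with no smallness in $\sigma$ or in $\lambda$, so a Neumann series in which you iteratively correct for $\sigma\Delta'h\,\bn$ will not converge uniformly over the sector. What the paper does instead is an \emph{exact, one-step superposition}: write $(\rho,\bu,h)=(\theta,\bv,0)+(\omega,\bw,h)$, where $(\theta,\bv)$ solves the $\sigma=0$ problem \eqref{r5} with the full data $(d,\bff,\bg,k)$ and \emph{no} $h$-equation at all (so that the stress boundary condition is satisfied without any surface tension term), and $(\omega,\bw,h)$ solves the surface-tension problem \eqref{r6} with \emph{all} right members zero except the kinematic equation, whose right member is $\eta=\zeta+\bv\cdot\bn$. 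There is no iteration; Lemma \ref{lem:5.3} then assembles the two $\CR$-bounded families additively.

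Second, and relatedly, you propose to impose all five scalar boundary conditions of \eqref{main prob} and invert the coefficient matrix directly by Cramer's rule. While this is possible in principle, it misses the structural reduction that makes the computation tractable. In the reduced problem \eqref{r7} every right member is zero except the kinematic one, so after the tangential Fourier transform the whole solution $(\hat\rho,\hat u_j,\hat u_N)$ is a linear combination of exponentials whose coefficients are \emph{proportional to the single scalar $\hat h(0)$}; see \eqref{rho}, \eqref{uj}, \eqref{un}. Plugging the resulting expression for $\hat u_N(0)$ into the kinematic relation $\lambda\hat h(0)+\hat u_N(0)=\hat\eta(0)$ gives one scalar equation, whose inversion produces the multiplier $\fm(\xi',\lambda)=2\mu\omega_\lambda\kappa^{-1}\fl_1\fl_2/\bM$, with $\bM(\xi',\lambda)$ as in \eqref{m}. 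The entire Lopatinski-type analysis then reduces to the single lower bound $|\bM(\xi',\lambda)|\gtrsim(|\lambda|+|\xi'|)(|\lambda|^{1/2}+|\xi'|)^{13}$ of Lemma \ref{lem:em}, rather than to a $5\times5$ determinant estimate for the fully inhomogeneous system. Without identifying this reduction you would be carrying the full $(d,\bff,\bg,k,\zeta)$ dependence through the Cramer inversion, which is far more cumbersome and is not needed once Saito's result has absorbed all the data except $\eta$.
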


Admitting Theorem \ref{main'}, we prove Theorem \ref{main}. 

\begin{proof}[proof of Theorem \ref{main}]
Let $\bF\in X_q(\R^N_+)$.
Note that there exist linear mappings
\[
	\CT : H^n_q(\R^N_+) \to W^{n-1/q}_q(\R^N_0),
\quad
	\CE : W^{n-1/q}_q(\R^N_0) \to H^n_q(\R^N_+)
\]
such that $\|\CT f\|_{W^{n-1/q}_q(\R^N_0)} \le C\|f\|_{W^n_q(\R^N_+)}$
and $\|\CE g\|_{W^n_q(\R^N_+)}\le C\|g\|_{W^{n-1/q}_q(\R^N_0)}$ for $n=2, 3$ (cf. \cite[Proposition 9.5.4]{S2016}).
For $\bF \in X_q (\R^N_+)$, we define operator $\tilde \CE$ as
$\tilde \CE \CF_\lambda \bF=(d, \bff, \nabla \bg, \lambda^{1/2} \bg, \nabla^2 k, \nabla \lambda^{1/2}k, \lambda k, \CE \zeta) \in \CY_q (\R^N_+)$. 
Setting $\CA_0(\lambda) = \CA(\lambda) \tilde \CE$, 
$\CB_0(\lambda) = \CB(\lambda) \tilde \CE$, and 
$\CC_0(\lambda) = \CT \CC(\lambda) \tilde \CE$,
Theorem \ref{main'} implies the existence of solutions to \eqref{main prob}
and $\CA_0(\lambda)$, $\CB_0(\lambda)$, and $\CC_0(\lambda)$ satisfy \eqref{rbdd1}.

The uniqueness of the solution to \eqref{main prob} follows from the existence theorem of the dual problem.
In fact, we assume that $(\rho, \bu, h)$ satisfies the following system, which is \eqref{main prob} with the right member $(d, \bff, \bg, h, \zeta)$ vanishing.
\begin{equation}\label{homo}
\left\{
\begin{aligned}
	&\lambda \rho + \dv \bu = 0 & \quad&\text{in $\R^N_+$}, \\
	&\lambda \bu - \DV(\bS(\bu) + \kappa \Delta \rho\bI)=0& \quad&\text{in $\R^N_+$},\\
	&(\bS(\bu) + \kappa \Delta \rho \bI) \bn
	 - \sigma \Delta' h \bn
	=0 & \quad&\text{on $\R^N_0$},\\
	&\bn \cdot \nabla \rho = 0 & \quad&\text{on $\R^N_0$},\\
	&\lambda h - \bu \cdot \bn = 0 & \quad&\text{on $\R^N_0$}.
\end{aligned}
\right.
\end{equation}
For any $\lambda \in \Sigma_{\epsilon, \lambda_0}$, $\bP \in C^\infty_0(\R^N_+)^N$, 
there exist a solution $(\theta, \bv, k) \in H^3_{q'}(\R^N_+) \times H^2_{q'}(\R^N_+)^N \times W^{3-1/q'}_{q'}(\R^N_+)$ such that
\begin{equation*}\label{dual}
\left\{
\begin{aligned}
	&\bar \lambda \theta + \dv \bv = 0 & \quad&\text{in $\R^N_+$}, \\
	&\bar \lambda \bv - \DV(\bS(\bv) + \kappa \Delta \theta \bI)=\bP& \quad&\text{in $\R^N_+$},\\
	&(\bS(\bv) + \kappa \Delta \theta \bI) \bn
	 - \sigma \Delta' k  \bn
	=0 & \quad&\text{on $\R^N_0$},\\
	&\bn \cdot \nabla \theta = 0 & \quad&\text{on $\R^N_0$},\\
	&\bar \lambda k - \bv \cdot \bn = 0 & \quad&\text{on $\R^N_0$}.
\end{aligned}
\right.
\end{equation*}
Note the facts that
\[
	 \sum^N_{i, j=1} \int_{\R^N_+} \pd_j u_i \overline{(\bS(\bv)_{ij} + \Delta \theta \delta_{ij})} \,dx 
	= 	\sum^N_{i, j=1} \int_{\R^N_+} (\bS(\bu)_{ij} + \Delta \rho \delta_{ij}) \overline{\pd_j v_i}\,dx,
\]
%and $(\Delta' h, k)_{\R^N_0}=(h, \Delta' k)_{\R^N_0}$,
where we have used $\bS(\bu)_{ij}=\mu(\pd_i u_j + \pd_j u_j)+(\nu-\mu)\dv\bu \delta_{ij}$ and 
the boundary condition $\bn \cdot \nabla \rho = \bn \cdot \nabla \theta = 0$.
Then we have
\begin{align*}
	(\bu, \bP)_{\R^N_+}
	&=(\bu, \bar \lambda \bv - \DV(\bS(\bv) + \kappa \Delta \theta \bI))_{\R^N_+}\\
	&=(\lambda \bu, \bv)_{\R^N_+} - (\bu, (\bS(\bv) + \kappa \Delta \theta \bI)\bn)_{\R^N_0}
	+ \sum^N_{i, j=1} \int_{\R^N_+}\pd_j u_i \overline{(\bS(\bv)_{ij}+\kappa \Delta \theta \delta_{ij})} \,dx\\
	&=(\DV(\bS(\bu) + \kappa \Delta \rho \bI), \bv)_{\R^N_+}
	- (\bu, \sigma \Delta'k\bn)_{\R^N_0}
	+ \sum^N_{i, j=1} \int_{\R^N_+} (\bS(\bu)_{ij} + \kappa \Delta \rho \delta_{ij}) \overline{\pd_j v_i}\,dx\\
	&=((\bS(\bu) + \kappa \Delta \rho \bI)\bn, \bv)_{\R^N_0} 
	- (\bu, \sigma \Delta' k \bn)_{\R^N_0}
	=(\sigma \Delta' h, \bv \cdot \bn)_{\R^N_0} 
	- (\bu \cdot \bn, \sigma \Delta'k)_{\R^N_0}\\
	&=(\sigma \Delta' h, \bar \lambda k )_{\R^N_0} 
	- (\lambda h, \sigma \Delta'k)_{\R^N_0}
	=(\sigma \Delta' h, \bar \lambda k )_{\R^N_0} 
	- (\sigma \Delta' h, \bar \lambda k)_{\R^N_0}\\
	&=0
\end{align*}
for arbitrary $\bP$, 
which implies that $\bu=0$ in $\R^N_+$.
Then by the last equation of \eqref{homo} % and Gauss' theorem,
we have $h=0$ on $\R^N_0$ because $\lambda \neq 0$.
Furthermore, the first equation of \eqref{homo} implies that $\lambda \rho =0$, then we have $\rho=0$ in $\R^N_+$ by $\lambda \neq 0$.
This completes the proof of Theorem \ref{main}.
\end{proof}

\section{The main idea and preliminaries}\label{sec:idea}
\subsection{The main idea}
Assume that $(\theta, \bv)$ and $(\omega, \bw, h)$ satisfy the following systems, respectively.
\begin{equation}\label{r5}
\left\{
\begin{aligned}
	&\lambda \theta + \dv \bv = d & \quad&\text{in $\R^N_+$}, \\
	&\lambda \bv -\DV(\bS(\bv)+ \kappa \Delta \theta\bI)=\bff& \quad&\text{in $\R^N_+$},\\
	&(\bS(\bv)+ \kappa \Delta \theta\bI) \bn=\bg &\quad&\text{on $\R^N_0$},\\
	&\bn \cdot \nabla \theta = k & \quad&\text{on $\R^N_0$},
\end{aligned}
\right.
\end{equation}
\begin{equation}\label{r6}
\left\{
\begin{aligned}
	&\lambda \omega + \dv \bw = 0 & \quad&\text{in $\R^N_+$}, \\
	&\lambda \bw - \DV(\bS(\bw)+ \kappa \Delta \omega\bI)=0& \quad&\text{in $\R^N_+$},\\
	&(\bS(\bw)+ \kappa \Delta \omega\bI) \bn 
	- \sigma \Delta' h \bn=0 & \quad&\text{on $\R^N_0$},\\
	&\bn \cdot \nabla \omega = 0 & \quad&\text{on $\R^N_0$},\\
	&\lambda h - \bw \cdot \bn = \zeta + \bv \cdot \bn & \quad&\text{on $\R^N_0$}.
\end{aligned}
\right.
\end{equation}
Setting
$\rho= \theta + \omega$ and $\bu = \bv + \bw$, 
we see that $(\rho, \bu, h)$ satisfies \eqref{main prob}.
The system \eqref{r5} was studied by \cite[Theorem 1.3]{S} as follows.
\begin{proposition}\label{without}
Let $1<q<\infty$ and $\lambda_0>0$.
Assume that $\mu$, $\nu$, and $\kappa$
are positive constants satisfying \eqref{condi};
$\tilde \epsilon_*$ is a constant given by \eqref{angle}.
Set 
\begin{align*}
	Z_q(\R^N_+) &= H^1_q(\R^N_+) \times L_q(\R^N_+)^N \times H^1_q(\R^N_+) ^N
	\times H^2_q(\R^N_+),\\
	\bG &=(d, \bff, \bg, k)\in Z_q(\R^N_+),\\
	\CZ_q(\R^N_+) &=H^1_q(\R^N_+) \times L_q(\R^N_+)^\CN,
	\quad \CN = N+N^2+N+N^2+N+1,\\
	\CG_\lambda \bG &=(d, \bff, \nabla \bg, \lambda^{1/2} \bg, \nabla^2 k, \nabla \lambda^{1/2}k, \lambda k) \in \CZ_q(\R^N_+).
\end{align*}
Then there is a constant $\epsilon_* \in (\tilde \epsilon_*, \pi/2)$ such that for any $\epsilon \in (\epsilon_*, \pi/2)$
the following assertions hold true: 

$\thetag1$ 
For any $\lambda \in \Sigma_{\epsilon, \lambda_0}$ there exist operator families 
\begin{align*}
&\CA_1 (\lambda) \in 
{\rm Hol} (\Sigma_{\epsilon, \lambda_0}, 
\CL(\CZ_q(\R^N_+), H^3_q(\R^N_+)))\\
&\CB_1 (\lambda) \in 
{\rm Hol} (\Sigma_{\epsilon, \lambda_0}, 
\CL(\CZ_q(\R^N_+), H^2_q(\R^N_+)^N))
\end{align*}
such that 
for any $\bF=(d, \bff, \bg, k) \in Z_q(\R^N_+)$, 
\begin{equation*}
\rho = \CA_1 (\lambda) \CG_\lambda \bG, \quad
\bu = \CB_1 (\lambda) \CG_\lambda \bG
\end{equation*}
are unique solutions of problem \eqref{r5}.

$\thetag2$ 
There exists a positive constant $r$ such that
\begin{equation*}
\begin{aligned}
&\CR_{\CL(\CZ_q(\R^N_+), \fA_q(\R^N_+))}
(\{(\tau \pd_\tau)^n \CR_\lambda \CA_1 (\lambda) \mid 
\lambda \in \Sigma_{\epsilon, \lambda_0}\}) 
\leq r,\\
&\CR_{\CL(\CZ_q(\R^N_+), \fB_q(\R^N_+))}
(\{(\tau \pd_\tau)^n \CS_\lambda \CB_1 (\lambda) \mid 
\lambda \in \Sigma_{\epsilon, \lambda_0}\}) 
\leq r
\end{aligned}
\end{equation*}
for $n = 0, 1$.
Here, above constant $r$ depend solely on $N$, $q$, $\epsilon$, $\lambda_0$, $\mu$, $\nu$, $\kappa$, and $\sigma$. 
\end{proposition}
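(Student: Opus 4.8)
The plan is to follow the half-space resolvent strategy of Saito \cite{S}: one splits the solution of \eqref{r5} into the part generated from the interior data by the corresponding whole-space problem and the part solving a model problem with vanishing right members in $\R^N_+$, proves the stated $\CR$-boundedness of each piece by the Fourier-multiplier technique, and recombines. For the first piece I would eliminate $\theta$ by the first equation, $\theta=\lambda^{-1}(d-\dv\bv)$, so that in $\R^N$ the density obeys the fourth-order equation with symbol $\kappa|\xi|^4+\lambda(\mu+\nu)|\xi|^2+\lambda^2$; under \eqref{condi} with $\epsilon>\tilde\epsilon_*$ this symbol does not vanish on $\R^N\setminus\{0\}$ (the discriminant condition $\alpha\ne0$ keeps its two roots in $|\xi|^2$ distinct, and the angle $\tilde\epsilon_*$ of \eqref{angle} is precisely the sector rotation that pushes those roots off $[0,\infty)$). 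The whole-space analysis of \cite[Sec.~2]{S} then yields $\CR$-bounded solution operators for the $\R^N$-problem; extending $(d,\bff)$ boundedly to $\R^N$ and subtracting the corresponding solution reduces the task to \eqref{r5} with $d=\bff=0$ and new boundary data $\bg,k$ whose norms in $H^1_q(\R^N_+)^N\times H^2_q(\R^N_+)$, together with the weighted quantities entering $\CG_\lambda$, are controlled by $\|\CG_\lambda\bG\|_{\CZ_q(\R^N_+)}$.

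\emph{Model problem.} For the reduced problem I would apply the partial Fourier transform in $x'$, turning \eqref{r5} into a system of linear ordinary differential equations in $x_N>0$. Using $\lambda\theta=-\dv\bv$ to eliminate the density, the solution bounded as $x_N\to\infty$ is a linear combination of $e^{-A(\lambda,\xi')x_N}$, $e^{-B_+(\lambda,\xi')x_N}$, $e^{-B_-(\lambda,\xi')x_N}$, where $A=\sqrt{|\xi'|^2+\lambda/\mu}$ is the viscous (Stokes) root and $B_\pm$ are the two roots with positive real part of the characteristic equation attached to the fourth-order operator for $\theta$; the condition $\alpha\ne0$ forces $B_+\ne B_-$, while $\kappa\ne\mu\nu$ is exactly the assertion that $\lambda/\mu$ is not a root of that capillary characteristic polynomial, i.e.\ that the Stokes mode is distinct from the two capillary modes. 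Imposing the $N$ scalar stress conditions $(\bS(\bv)+\kappa\Delta\theta\bI)\bn=\bg$ and the single condition $\bn\cdot\nabla\theta=k$ gives a linear algebraic system of size $N+1$ for the coefficients; after reduction by tangential rotation to $\xi'=(|\xi'|,0,\dots,0)$ one solves it by Cramer's rule, and $\rho,\bu$ are recovered as inverse partial Fourier transforms of explicit multipliers built from $A$, $B_\pm$, $\lambda$, $\xi'$ and the Lopatinskii determinant $L(\lambda,\xi')$ appearing in the denominator. Holomorphy in $\lambda$ is then immediate from these formulas, and uniqueness for \eqref{r5} follows from a duality argument of the same type as in the proof of Theorem~\ref{main}.

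\emph{$\CR$-boundedness and assembly.} The key analytic step is a lower bound $|L(\lambda,\xi')|\ge c>0$ uniform over $\lambda\in\Sigma_{\epsilon,\lambda_0}$, $\xi'\ne0$, for $\epsilon_*$ and $\lambda_0$ suitably chosen; granted this, one checks that every symbol occurring in $\CR_\lambda\CA_1(\lambda)$ and $\CS_\lambda\CB_1(\lambda)$ satisfies Mikhlin-type bounds $|\pd_{\xi'}^{\alpha'}(\tau\pd_\tau)^n m(\lambda,\xi')|\le C_{\alpha'}|\xi'|^{-|\alpha'|}$ for $\lambda=\gamma+i\tau$ and $n=0,1$, uniformly in the sector. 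This uses the comparabilities $|A|,|B_\pm|\sim|\lambda|^{1/2}+|\xi'|$ and the fact that $\pd_{\xi'}$ and $\lambda\pd_\lambda$ act tamely on square-root symbols of this kind, the extra powers $\lambda^{1/2}$ and tangential derivatives hidden in $\CR_\lambda,\CS_\lambda,\CG_\lambda$ being absorbed by exactly these weights. One then invokes the now-standard lemma that Fourier-multiplier families with symbols in this class are $\CR$-bounded on $L_q(\R^N_+)$ (a Volevich-type argument together with the criterion of \cite{DHP}); since the extension and restriction operators used above are bounded, hence $\CR$-bounded, Lemma~\ref{lem:5.3} assembles the whole-space and model-problem pieces into the families $\CA_1(\lambda),\CB_1(\lambda)$ with the asserted $\CR$-bounds.

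\emph{Main obstacle.} I expect the genuinely difficult point to be the uniform lower bound for the Lopatinskii determinant $L(\lambda,\xi')$, particularly in the transition regime $|\xi'|\sim|\lambda|^{1/2}$ where $A$ and $B_\pm$ are all of comparable magnitude and the cancellations in $L$ are most delicate; this is precisely where $\alpha\ne0$ and $\kappa\ne\mu\nu$ are indispensable and where one is forced to choose $\epsilon_*>\tilde\epsilon_*$ strictly and $\lambda_0$ large. As the statement records, this proposition is \cite[Theorem~1.3]{S}, where a complete argument along these lines is carried out.
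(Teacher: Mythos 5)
The paper itself does not prove this proposition; it simply quotes it as \cite[Theorem 1.3]{S}, and all of the machinery (partial Fourier transform in $x'$, the roots $\omega_\lambda$, $t_1$, $t_2$ of \eqref{notation}, the Volevich trick, and the multiplier classes $\bM^1_s$, $\bM^2_s$) that you sketch is precisely the machinery of that reference, so your outline matches the route the paper implicitly relies on. Two small remarks on precision: first, the phrase ``eliminate $\theta$ \dots\ so that the density obeys the fourth-order equation'' mixes up which unknown is kept --- one either eliminates $\theta$ to get a fourth-order equation for $\dv\bv$, or eliminates $\dv\bv$ to get one for $\theta$; both lead to the same characteristic polynomial $\kappa|\xi|^4 + (\mu+\nu)\lambda|\xi|^2 + \lambda^2$, so this is harmless but worth stating cleanly. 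Second, the Lopatinskii lower bound should not be stated as $|L(\lambda,\xi')|\ge c>0$ uniformly: $L$ is homogeneous in the parabolic scale, so the correct statement is a lower bound of the form $|L(\lambda,\xi')|\ge c\,(|\lambda|^{1/2}+|\xi'|)^m$ for the appropriate $m$ (compare the paper's Lemma~\ref{lem:em}, where $|\bM|\ge C(|\lambda|+|\xi'|)(|\lambda|^{1/2}+|\xi'|)^{13}$), and it is exactly this combination of a lower bound of the right degree and the symbol estimates of Lemma~\ref{est:fl} that feeds the $\bM^1_s$-classes and the Volevich/$\CR$-boundedness step. Apart from these two formulation issues, your sketch is a faithful account of the argument carried out in \cite{S}.
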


\begin{remark}
The constant $\epsilon_*$ will be determined in Lemma \ref{est:fl} $\thetag3$ below.
\end{remark}
In view of Prop \ref{without}, it suffices to consider the following system arising \eqref{r6}.
\begin{equation}\label{r7}
\left\{
\begin{aligned}
	&\lambda \rho + \dv \bu = 0 & \quad&\text{in $\R^N_+$}, \\
	&\lambda \bu - \mu \Delta \bu - \nu \nabla \dv \bu 
	- \kappa \Delta \nabla \rho=0& \quad&\text{in $\R^N_+$},\\
	&\{\mu \bD(\bu) + (\nu-\mu)\dv \bu \bI + \kappa \Delta \rho \bI\} \bn 
	- \sigma \Delta' h \bn=0 & \quad&\text{on $\R^N_0$},\\
	&\bn \cdot \nabla \rho = 0 & \quad&\text{on $\R^N_0$},\\
	&\lambda h - \bu \cdot \bn = \eta & \quad&\text{on $\R^N_0$},
\end{aligned}\right.
\end{equation}
where we have used $\DV \bS(\bu)=\mu \Delta \bu + \nu \nabla \dv \bu$.
For \eqref{r7}, we will prove the following result from the next subsection.
\begin{proposition}\label{with}
Let $1<q<\infty$.
Assume that $\mu$, $\nu$, $\kappa$, and $\sigma$
are positive constants satisfying \eqref{condi}.
Let $\epsilon \in (\epsilon_*, \pi/2)$ for $\epsilon_*$ given in Proposition \ref{without}. 
Then 
there exists a constant $\lambda_0 \ge 1$ such that the following assertions hold true:

$\thetag1$ 
For any $\lambda \in \Sigma_{\epsilon, \lambda_0}$ there exist operator families 
\begin{equation}\label{family}
\begin{aligned}
&\CA_2 (\lambda) \in 
{\rm Hol} (\Sigma_{\epsilon, \lambda_0}, 
\CL(H^2_q(\R^N_+), H^3_q(\R^N_+)))\\
&\CB_2 (\lambda) \in 
{\rm Hol} (\Sigma_{\epsilon, \lambda_0}, 
\CL(H^2_q(\R^N_+), H^2_q(\R^N_+)^N)),\\
&\CC_2 (\lambda) \in 
{\rm Hol} (\Sigma_{\epsilon, \lambda_0}, 
\CL(H^2_q(\R^N_+), H^3_q(\R^N_+)))
\end{aligned}
\end{equation}
such that 
for any $\eta \in H^2_q(\R^N_+)$, 
\begin{equation*}
\rho = \CA_2 (\lambda) \eta, \quad
\bu = \CB_2 (\lambda) \eta, \quad
h = \CC_2 (\lambda) \eta
\end{equation*}
are solutions of problem \eqref{r7}.

$\thetag2$ 
There exists a positive constant $r$ such that
\begin{equation} \label{est:rbdd}
\begin{aligned}
&\CR_{\CL(H^2_q(\R^N_+), \fA_q(\R^N_+))}
(\{(\tau \pd_\tau)^n \CR_\lambda \CA_2 (\lambda) \mid 
\lambda \in \Sigma_{\epsilon, \lambda_0}\}) 
\leq r,\\
&\CR_{\CL(H^2_q(\R^N_+), \fB_q(\R^N_+))}
(\{(\tau \pd_\tau)^n \CS_\lambda \CB_2 (\lambda) \mid 
\lambda \in \Sigma_{\epsilon, \lambda_0}\}) 
\leq r,\\
&\CR_{\CL(H^2_q(\R^N_+), \tilde \fC_q(\R^N_+))}
(\{(\tau \pd_\tau)^n \CT_\lambda \CC_2 (\lambda) \mid 
\lambda \in \Sigma_{\epsilon, \lambda_0}\}) 
\leq r
\end{aligned}
\end{equation}
for $n = 0, 1$.
Here, above constants $\lambda_0$ and $r$ depend solely on $N$, $q$, $\epsilon$, 
$\mu$, $\nu$, $\kappa$, and $\sigma$. 
\end{proposition}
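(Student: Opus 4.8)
The strategy is to solve the system \eqref{r7} explicitly via the partial Fourier transform in the tangential variables $x' \in \R^{N-1}$, dual variable $\xi'$, following the framework developed for resolvent problems in the half-space by Shibata and collaborators. First I would use the divergence equation $\lambda\rho + \dv\bu = 0$ to eliminate $\rho = -\lambda^{-1}\dv\bu$, reducing \eqref{r7} to a system for $\bu$ and $h$ alone; substituting into the momentum equation yields a single higher-order equation for $\bu$ whose symbol, after partial Fourier transform, is a polynomial in $\pd_N$ of the form dictated by the factorization analyzed in \cite[Sec. 2]{S}. The characteristic roots will be $-A := -\sqrt{|\xi'|^2 + \lambda/\mu}$ (the Stokes-type root) together with the two roots $-B_\pm$ coming from the Korteweg structure, i.e. the roots of the quadratic in $t^2$ associated with $\mu\nu$, $\kappa$, and the constant $\alpha$ in \eqref{angle}; the hypothesis $\alpha \neq 0$ guarantees these two roots are distinct, and $\kappa \neq \mu\nu$ keeps them separated from $A$, so the solution formula is built from the three exponentials $e^{-Ax_N}$, $e^{-B_+x_N}$, $e^{-B_-x_N}$. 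The free-boundary unknown $h$ enters only through the third and fifth equations of \eqref{r7}; the kinematic equation $\lambda h - \bu\cdot\bn = \eta$ determines $h$ once $\bu$ is known, so effectively $h = \lambda^{-1}(\eta + \bu\cdot\bn|_{x_N=0})$, and the real work is to solve the boundary conditions (the stress condition with the extra $-\sigma\Delta' h\,\bn$ term and the Neumann condition $\bn\cdot\nabla\rho=0$) for the coefficients of the exponentials.

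Next I would assemble the boundary conditions into a linear system $L(\xi',\lambda)\,\ba = (\text{data from }\eta)$ for the coefficient vector $\ba$, where the $\sigma\Delta' h$ term contributes a factor $\sigma|\xi'|^2/\lambda$ times $\bu\cdot\bn|_0$ into the tangential/normal stress balance. The crucial step is to show that the Lopatinski-type determinant $\det L(\xi',\lambda)$ does not vanish and is bounded below in the right way: concretely, one needs an estimate of the form $|\det L(\xi',\lambda)| \gtrsim (\text{product of the relevant symbol factors})$ uniformly for $\lambda \in \Sigma_{\epsilon,\lambda_0}$ with $\lambda_0$ large, on the whole range $\xi' \in \R^{N-1}\setminus\{0\}$. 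I expect this determinant to factor, after pulling out obvious nonvanishing pieces, into a principal part plus lower-order terms controlled by $\lambda_0^{-1/2}$; the surface-tension contribution $\sigma|\xi'|^2/\lambda$ should be sign-definite in the dominant balance (this is the usual stabilizing effect of surface tension), so choosing $\lambda_0$ large enough absorbs all remainders. This lower bound, together with Cramer's rule, gives the solution operators $\CA_2,\CB_2,\CC_2$ as explicit multipliers, and their holomorphy in $\lambda$ is immediate from the holomorphy of the roots $A, B_\pm$ and of $\det L$ away from its zeros.

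For part $\thetag2$, the $\CR$-boundedness, I would invoke the standard machinery: the symbols for $\CR_\lambda\CA_2(\lambda)$, $\CS_\lambda\CB_2(\lambda)$, $\CT_\lambda\CC_2(\lambda)$ are, after the Fourier multiplier representation, of the form $\int_0^\infty m(\xi',\lambda,x_N,y_N)\,(\cdot)\,dy_N$ with $m$ built from the exponentials and rational functions of $A$, $B_\pm$, $\lambda^{1/2}$, $|\xi'|$; one checks that these symbols satisfy the Mikhlin-type bounds of the multiplier classes $\mathbf{M}_{0,b}$ (in the notation of \cite{S}), i.e. $|(\tau\pd_\tau)^n(\xi'\pd_{\xi'})^{\alpha'} m| \le C$ uniformly, using the elementary estimates $|A| \sim |\lambda|^{1/2} + |\xi'|$, $|B_\pm| \sim |\lambda|^{1/2} + |\xi'|$ valid on $\Sigma_{\epsilon,\lambda_0}$ and the determinant lower bound just established; the exponential decay $e^{-cA x_N}$ etc. handles the $y_N$-integration via Young's inequality. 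Then the abstract lemmas of \cite{S} (Volevich-type identities plus the $\CR$-boundedness of the operators $\CL^{-1}[\,|\xi'|^a e^{-B x_N}\CF'[\cdot]\,]$ and their companions, and Lemma \ref{lem:5.3} for composition) upgrade the uniform symbol bounds to the claimed $\CR$-bounds. \textbf{The main obstacle} I anticipate is the determinant estimate: because the Korteweg term raises the order of the density equation, $\det L$ is a more intricate polynomial than in the pure Navier–Stokes free-boundary case, and establishing a clean, sign-favorable leading balance — so that surface tension and viscosity together dominate and $\lambda_0$ can absorb the rest — will require careful bookkeeping of which monomials in $A, B_\pm, |\xi'|, \lambda^{1/2}$ survive, especially in the two regimes $|\xi'| \lesssim |\lambda|^{1/2}$ and $|\xi'| \gtrsim |\lambda|^{1/2}$.
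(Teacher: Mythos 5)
Your outline takes essentially the same route as the paper: partial Fourier transform, an explicit solution built from the three exponentials $e^{-\omega_\lambda x_N}$, $e^{-t_1 x_N}$, $e^{-t_2 x_N}$ (your $A$, $B_\pm$), a nonvanishing Lopatinski-type denominator bounded below on $\Sigma_{\epsilon,\lambda_0}$ for large $\lambda_0$, and Volevich integral representations fed into the multiplier lemmas. Two differences are worth flagging. First, the paper does \emph{not} redo the boundary-system algebra or re-establish symbol bounds for the Korteweg roots: it imports the solution formulas (parametrized by $\hat h(0)$) and the bound $\fl_j^{\pm 1}\in\bM^1_{\pm 6}$ from \cite[Sec. 5]{S}, so the only new denominator to control is the scalar $\bM(\xi',\lambda)$ in \eqref{m}, and Lemma \ref{lem:em} handles it by a two-regime split on $|\xi'|/|\lambda|$ — in the small-ratio regime the dominant term is the viscous/Korteweg piece $2\mu\omega_\lambda\lambda\kappa^{-1}\fl_1\fl_2$ and the surface-tension terms are remainders, while only in the large-ratio regime does the asymptotic balance $|\xi'|^{13}(2\mu\lambda+\sigma|\xi'|)$ involve $\sigma$ (there the sector condition on $\lambda$, not the positivity of $\sigma$ per se, gives the lower bound). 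This does not match your expectation that the surface-tension term carries the sign-definiteness throughout, and it also means the small-ratio regime needs no largeness of $\lambda_0$ at all.

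Second, and more substantively, you do not address how $\CC_2(\lambda)$ lands in $\CL(H^2_q(\R^N_+),H^3_q(\R^N_+))$: $h$ in \eqref{r7} lives only on the boundary, so the paper extends it to the half-space by $h(x)=\vp(x_N)\CF^{-1}_{\xi'}[\fm(\xi',\lambda)e^{-|\xi'|x_N}\hat\eta(\xi',0)]$ with a cutoff $\vp\in C^\infty_0$. The kernel $e^{-|\xi'|x_N}$ has no $\lambda$-decay, so the "exponential decay $e^{-cAx_N}$ handles the $y_N$-integration" step in your plan fails for this operator; one must instead use the cutoff to localize and pass to symbols of type $\bM^2_0$ (bounds degrading like $|\xi'|^{-|\alpha'|}$, not $(|\lambda|^{1/2}+|\xi'|)^{-|\alpha'|}$) together with the dedicated Lemma \ref{lem:t3}, not the same lemma as for $\CA_2,\CB_2$. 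If you run your plan as written, the estimate for $\CT_\lambda\CC_2(\lambda)$ will not come out.
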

Let us point out that Theorem \ref{main'} is obtained by Proposition \ref{without} and Proposition \ref{with}.

\begin{proof}[Proof of Theorem \ref{main'}]
Set $\bv \cdot \bn=-v_N=-\CB_{1N}(\lambda)\CG_\lambda \bG$.
In view of \eqref{r6} and Proposition \ref{with}, we also set
\begin{align*}
	\CA_3(\lambda) \CF_\lambda \bF &=\CA_2(\lambda)(\zeta - \CB_{1N}(\lambda)\CG_\lambda \bG),\\
	\CB_3(\lambda) \CF_\lambda \bF &=\CB_2(\lambda)(\zeta - \CB_{1N}(\lambda)\CG_\lambda \bG),\\
	\CC(\lambda) \CF_\lambda \bF &=\CC_2(\lambda)(\zeta - \CB_{1N}(\lambda)\CG_\lambda \bG),
\end{align*}
and then $\omega=\CA_3(\lambda) \CF_\lambda \bF$, $\bw=\CB_3(\lambda) \CF_\lambda \bF$, and $h=\CC(\lambda) \CF_\lambda \bF$
satisfy \eqref{r6}.
Define operator families $\CA(\lambda)$ and $\CB(\lambda)$ as 
$\CA(\lambda) \CF_\lambda \bF = \CA_1(\lambda) \CF_\lambda \bF + \CA_3(\lambda) \CF_\lambda \bF$
and
$\CB(\lambda) \CF_\lambda \bF = \CB_1(\lambda) \CF_\lambda \bF + \CB_3(\lambda) \CF_\lambda \bF$.
Then we see that $\CA(\lambda)$, $\CB(\lambda)$, and $\CC(\lambda)$ are the desired operator families by
Proposition \ref{without}, Proposition \ref{with}, and Lemma \ref{lem:5.3}.
\end{proof}

Therefore, our main task is to show Proposition \ref{with}.

\subsection{Solution formulas}
The system \eqref{r7} is equivalent to 
\begin{equation}\label{r8}
\left\{
\begin{aligned}
	&\lambda \rho + \dv \bu = 0 & \quad&\text{in $\R^N_+$}, \\
	&\lambda \bu - \mu \Delta \bu - \nu \nabla \dv \bu 
	- \kappa \Delta \nabla \rho=0& \quad&\text{in $\R^N_+$},\\
	&\mu (\pd_j u_N + \pd_N u_j) = 0 & \quad&\text{on $\R^N_0$},\\
	&2\mu \pd_N u_N + (\nu-\mu)\dv \bu + \kappa \Delta \rho
	- \sigma \Delta' h =0 & \quad&\text{on $\R^N_0$},\\
	&\pd_N \rho = 0 & \quad&\text{on $\R^N_0$},\\
	&\lambda h+u_N = \eta & \quad&\text{on $\R^N_0$}
\end{aligned}
\right.
\end{equation}
for $j=1, \ldots, N-1$.
In this subsection, we calculate the solution formulas for \eqref{r8}.
For this purpose, we define the partial Fourier transform with respect to $x'=(x_1, \ldots, x_{N-1})$ and its inverse transform as
\begin{align*}
	&\hat u = \hat u(x_N) = \int_{\R^{N-1}} e^{- i x'\cdot \xi'} u(x', x_N)\,dx',\\
	&\CF^{-1}_{\xi'}[v(\xi', x_N)](x') = \frac{1}{(2\pi)^{N-1}} \int_{\R^{N-1}} e^{ i x'\cdot \xi'} v(\xi', x_N)\,d\xi'.
\end{align*}  
Set $\vp = \dv \bu$. Applying the partial Fourier transform to \eqref{r8}, we have
\begin{equation}\label{r}
\left\{
\begin{aligned}
	&\lambda \hat \rho + \hat \vp = 0, & x_N>0,\\
	&\lambda \hat u_j - \mu(\pd_N^2 - |\xi'|^2) \hat u_j -\nu i\xi_j \hat \vp - \kappa i\xi_j(\pd_N^2-|\xi'|^2)\hat \rho=0, & x_N>0,\\
	&\lambda \hat u_N - \mu(\pd_N^2 - |\xi'|^2) \hat u_N -\nu \pd_N \hat \vp - \kappa \pd_N(\pd_N^2-|\xi'|^2)\hat \rho=0, & x_N>0,\\
	&\mu(i\xi_j \hat u_N(0) + \pd_N \hat u_j(0))=0,\\
	&2\mu \pd_N \hat u_N(0) + (\nu-\mu) \hat \vp(0) + \kappa(\pd_N^2-|\xi'|^2) \hat \rho(0) + \sigma |\xi'|^2 \hat h(0)=0,\\
	&\pd_N \hat \rho(0)=0,\\
	&\lambda \hat h(0) + \hat u_N(0)=\hat \eta(0).
\end{aligned}
\right.
\end{equation}
To represent a solution, we set and recall the following notations:
\begin{equation}\label{notation}
\begin{aligned}
	&s_1=s_+, \quad s_2 = s_-, \quad
	s_\pm = \left\{
	\begin{aligned}
	&\frac{\mu+\nu}{2 \kappa} \pm \sqrt{\alpha} \enskip (\alpha > 0),\\ 
	&\frac{\mu+\nu}{2 \kappa} \pm i \sqrt{|\alpha|} \enskip (\alpha < 0),
	\end{aligned}
	\right.
	\quad \alpha = \left(\frac{\mu+\nu}{2\kappa}\right)^2 - \frac{1}{\kappa} \neq 0,\\
	&\omega_\lambda = \sqrt{|\xi'|^2 + \mu^{-1} \lambda}, \quad t_j = \sqrt{|\xi'|^2 + s_j\lambda}, \\
	&\fl_j(\xi', \lambda) = \mu^{-2} \lambda t_j (t_j + \omega_\lambda)(t_2^2 + t_2t_1 + t_1^2 - |\xi'|^2) \\
	&\qquad+ 4 \omega_\lambda |\xi'|^2 \{s_j t_j \omega_\lambda (t_j+\omega_\lambda)
	-(s_j-\mu^{-1}) t_1 t_2 (t_2+t_1)\},\\
	&\fa(\xi', \lambda) = \frac{\kappa^{-1} (t_2+t_1)}{s_2-s_1}, \\
	&\fp_j(\xi', \lambda) = (4s_j -3\mu^{-1})\omega_\lambda +\mu^{-1}t_j,
	\quad \fq_j(\xi', \lambda) = (2s_j - \mu^{-1})\omega_\lambda + \mu^{-1}t_j,\\
	&\CM_0(x_N)=\frac{e^{-t_2 x_N}-e^{-t_1 x_N}}{t_2-t_1},
	\quad \CM_j(x_N)=\frac{e^{-t_j x_N}-e^{-\omega_\lambda x_N}}{t_2-t_1}
\end{aligned}
\end{equation}
for $j=1, 2$. Here we note that $s_1s_2=\kappa^{-1}$.
Thanks to \cite[Sec.5]{S}, the solution formulas for $\hat \rho$, $\hat u_j$, and $\hat u_N$ satisfying \eqref{r} are written by $\hat h(0)$ as follows:
\begin{align}
	\hat \rho(x_N) &= \frac{t_1s_1s_2 (t_1+\omega_\lambda)(\omega_\lambda^2+|\xi'|^2)}{\mu \fl_1(\xi',\lambda)}e^{-t_1x_N}\sigma |\xi'|^2 \hat h(0)\nonumber \\
	&\enskip + \frac{s_1s_2 t_1^2(t_1+\omega_\lambda)(\omega_\lambda^2+|\xi'|^2)}{\mu \fl_1(\xi', \lambda)}\CM_0(x_N)\sigma |\xi'|^2 \hat h(0),\label{rho}\\
	\hat u_j(x_N) &=\frac{i\xi_j}{2\mu \omega_\lambda^2}e^{-\omega_\lambda x_N}\sigma |\xi'|^2 \hat h(0)\nonumber\\
	&\enskip +
	\sum^2_{l=1} \frac{(-1)^{l+1}i\xi_j t_1 t_2 (\omega_\lambda^2+|\xi'|^2)\fa(\xi', \lambda)\fp_l(\xi', \lambda)}{2\mu \omega_\lambda^2 s_l \fl_l(\xi', \lambda)}
e^{-\omega_\lambda x_N} 
	\sigma |\xi'|^2 \hat h(0)\nonumber\\
	&\enskip -  
	\sum^2_{l=1} \frac{(-1)^{l+1}i \xi_j t_1t_2 (\omega_\lambda^2 + |\xi'|^2)s_1s_2(t_l+\omega_\lambda)}{\mu s_l \fl_l(\xi, \lambda)}\CM_l(x_N)
	\sigma |\xi'|^2 \hat h(0), \label{uj}\\
	\hat u_N(x_N) &=\frac{1}{2\mu \omega_\lambda}e^{-\omega_\lambda x_N}\sigma |\xi'|^2 \hat h(0)\nonumber\\
	&\enskip + \sum^2_{l=1}\frac{(-1)^{l+1}t_1 t_2 (\omega_\lambda^2 +|\xi'|^2)\fa(\xi', \lambda) \fq_l(\xi', \lambda)}{2\mu \omega_\lambda s_l\fl_l(\xi', \lambda)}e^{-\omega_\lambda x_N}\sigma |\xi'|^2 \hat h(0) \nonumber\\
	&\enskip + 
	\sum^2_{l=1}\frac{(-1)^{l+1}t_1t_2 (\omega_\lambda^2 +|\xi'|^2)s_1s_2t_l(t_l+\omega_\lambda)}{\mu s_l \fl_l(\xi', \lambda)} \CM_l(x_N)
	\sigma |\xi'|^2 \hat h(0) \label{un}
\end{align}
for $j=1, \ldots, N-1$.
Set $x_N=0$ in \eqref{un}, then we have 
\begin{equation}\label{un0}
	\hat u_N(0) = \frac{1}{2 \mu \omega_\lambda} \sigma |\xi'|^2 \hat h(0)
	+ \sum^2_{l=1} \frac{(-1)^{l+1} t_1 t_2 (\omega_\lambda^2 +|\xi'|^2) \fa(\xi', \lambda) \fq_l(\xi', \lambda)}
	{2\mu \omega_\lambda s_l \fl_l(\xi', \lambda)} \sigma |\xi'|^2 \hat h(0).
\end{equation}
Substituting \eqref{un0} into the last equation of \eqref{r} yields that
\begin{equation}\label{h}
	\hat h(0) = %\frac{2\mu \omega_\lambda \kappa^{-1} \fl_1(\xi', \lambda) \fl_2(\xi', \lambda)}{\bM(\xi', \lambda)}\hat \eta(0),
	\fm(\xi', \lambda)\hat \eta(0),
\end{equation}
where 
\begin{align}
	\fm(\xi', \lambda)&=\frac{2\mu \omega_\lambda \kappa^{-1} \fl_1(\xi', \lambda) \fl_2(\xi', \lambda)}{\bM(\xi', \lambda)}, \nonumber \\
	\bM(\xi', \lambda)&= 2\mu \omega_\lambda \lambda  \kappa^{-1}  \fl_1(\xi', \lambda) \fl_2(\xi', \lambda)  \nonumber\\
	& \enskip + \sigma|\xi'|^2 \kappa^{-1} \fl_1(\xi', \lambda) \fl_2(\xi', \lambda) \label{m}\\
	& \enskip
	+  \sigma|\xi'|^2 t_1 t_2 (\omega_\lambda^2 +|\xi'|^2) \fa(\xi', \lambda) 
	(s_2 \fl_2(\xi', \lambda) \fq_1(\xi', \lambda)-s_1 \fl_1(\xi', \lambda) \fq_2(\xi', \lambda)). \nonumber
\end{align}

Inserting \eqref{h} into \eqref{rho}, \eqref{uj}, and \eqref{un}, we have solution formulas for \eqref{r8} as follows:
\begin{equation}\label{sol}
\begin{aligned}
	\rho &= \CF^{-1}_{\xi'}\left[\frac{t_1s_1s_2 (t_1+\omega_\lambda)(\omega_\lambda^2+|\xi'|^2)}{\mu \fl_1(\xi',\lambda)} \sigma|\xi'|^2 \fm(\xi', \lambda) e^{-t_1x_N} \hat \eta(0)\right](x')\\
	&\enskip + \CF^{-1}_{\xi'}\left[\frac{s_1s_2 t_1^2(t_1+\omega_\lambda)(\omega_\lambda^2+|\xi'|^2)}{\mu \fl_1(\xi', \lambda)} \sigma|\xi'|^2 \fm(\xi', \lambda)\CM_0(x_N) \hat \eta(0)\right](x'),\\
	u_j &=\CF^{-1}_{\xi'}\left[\frac{i\xi_j}{2\mu \omega_\lambda^2} \sigma|\xi'|^2 \fm(\xi', \lambda)e^{-\omega_\lambda x_N} \hat \eta(0)\right](x')\\
	&\enskip +
	\sum^2_{l=1} \CF^{-1}_{\xi'}\left[\frac{(-1)^{l+1}i\xi_j t_1 t_2 (\omega_\lambda^2+|\xi'|^2)\fa(\xi', \lambda)\fp_l(\xi', \lambda)}{2\mu \omega_\lambda^2 s_l \fl_l(\xi', \lambda)} \sigma|\xi'|^2 
	\fm(\xi', \lambda)e^{-\omega_\lambda x_N} \hat \eta(0)\right](x')\\
	&\enskip -  
	\sum^2_{l=1} \CF^{-1}_{\xi'}\left[\frac{(-1)^{l+1}i \xi_j t_1t_2 (\omega_\lambda^2 + |\xi'|^2)s_1s_2(t_l+\omega_\lambda)}{\mu s_l \fl_l(\xi, \lambda)} \sigma|\xi'|^2 \fm(\xi', \lambda)\CM_l(x_N)
\hat \eta(0)\right](x'), \\
	u_N &=\CF^{-1}_{\xi'}\left[\frac{1}{2\mu \omega_\lambda} \sigma|\xi'|^2 \fm(\xi', \lambda)e^{-\omega_\lambda x_N}\hat \eta(0)\right](x')\\
	&\enskip + \sum^2_{l=1}\CF^{-1}_{\xi'}\left[\frac{(-1)^{l+1}t_1 t_2 (\omega_\lambda^2 +|\xi'|^2)\fa(\xi', \lambda) \fq_l(\xi', \lambda)}{2\mu \omega_\lambda s_l\fl_l(\xi', \lambda)} \sigma|\xi'|^2 \fm(\xi', \lambda)e^{-\omega_\lambda x_N}\hat \eta(0)\right](x')\\
	&\enskip + 
	\sum^2_{l=1}\CF^{-1}_{\xi'}\left[\frac{(-1)^{l+1}t_1t_2 (\omega_\lambda^2 +|\xi'|^2)s_1s_2t_l(t_l+\omega_\lambda)}{\mu s_l \fl_l(\xi', \lambda)} \sigma|\xi'|^2 
	\fm(\xi', \lambda) \CM_l(x_N)\hat \eta(0)\right](x')
\end{aligned}
\end{equation}
for $j=1, \ldots, N-1$.
We will discuss $\CR$-boundedness using the solution formulas in Sec. \ref{sec:rbdd} below. 
As the end of the subsection, we verify that $\bM(\xi', \lambda)^{-1}$ is well-defined by the following lemma.
\begin{lemma}
	Assume that $\mu$, $\nu$, $\kappa$, and $\sigma$ are positive constants satisfying \eqref{condi}.
	Then $\bM(\xi', \lambda) \neq 0$ for any $(\xi', \lambda) \in (\R^{N-1} \setminus\{0\}) \times (\overline{\C_+} \setminus \{0\})$, 
	where $\overline{\C_+} = \{z \in \C \mid \Re z \ge 0\}$.
\end{lemma}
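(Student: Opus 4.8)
The goal is to show the nonvanishing of $\bM(\xi',\lambda)$ on $(\R^{N-1}\setminus\{0\})\times(\overline{\C_+}\setminus\{0\})$. The natural approach is an energy/uniqueness argument at the level of the reduced boundary-layer system, rather than a direct manipulation of the explicit expression \eqref{m}. The plan is as follows: first, I would recall from \cite[Sec.~5]{S} that $\fl_1(\xi',\lambda),\fl_2(\xi',\lambda)\neq0$ for $(\xi',\lambda)\in(\R^{N-1}\setminus\{0\})\times(\overline{\C_+}\setminus\{0\})$, so that the numerator $2\mu\omega_\lambda\kappa^{-1}\fl_1\fl_2$ of $\fm$ is nonzero and the construction \eqref{rho}--\eqref{un} of $(\hat\rho,\hat u)$ from $\hat h(0)$ is meaningful. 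The key observation is that $\bM(\xi',\lambda)=0$ for some $(\xi',\lambda)$ would produce a nontrivial solution of the homogeneous ODE system \eqref{r} with $\hat\eta(0)=0$: indeed, by \eqref{h} one could then take $\hat h(0)\neq0$ arbitrary, build $\hat\rho,\hat u_j,\hat u_N$ via \eqref{rho}--\eqref{un}, and the last equation of \eqref{r} reads $\lambda\hat h(0)+\hat u_N(0)=0$, which is exactly the statement $\bM(\xi',\lambda)\hat h(0)=0$ divided through by the (nonzero) factor $2\mu\omega_\lambda\kappa^{-1}\fl_1\fl_2/\bM$; so a zero of $\bM$ is equivalent to a nontrivial $(\hat\rho,\hat u,\hat h(0))$ solving \eqref{r} with zero data.

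\textbf{Energy argument.} Second, I would rule out such a nontrivial solution by a Fourier-side energy identity. Transforming back, a nontrivial solution of \eqref{r} with $\hat\eta(0)=0$ gives, for the fixed $\xi'$, functions $\rho(x_N),\bu(x_N)\in L_2$-type spaces on $(0,\infty)$ together with $h$ solving the homogeneous system obtained from \eqref{r8} with $\eta=0$. Testing the momentum equation against $\overline{\bu}$ and integrating over $x_N>0$, then integrating by parts using the stress boundary conditions (the first four lines of \eqref{r8} with the $\sigma\Delta'h$ term), and combining with $\lambda\rho=-\dv\bu$ from the first equation and $\lambda h=-u_N|_{x_N=0}$ from the last, one obtains an identity of the schematic form
\[
\lambda\|\bu\|^2+\frac{\mu}{2}\|\bD(\bu)\|^2+(\nu-\mu)\|\dv\bu\|^2+\frac{\kappa}{\lambda}\|\nabla\rho\|^2_{\text{(weighted)}}+\frac{\sigma}{\lambda}|\xi'|^2|h|^2=0,
\]
where all norms are the appropriate $x_N$-integrals at the fixed frequency $\xi'$ (the precise constants and the exact form of the density and surface-tension contributions will require care, using $\bn\cdot\nabla\rho=0$ and $\pd_N\hat\rho(0)=0$ to kill boundary terms). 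Taking real and imaginary parts of this identity for $\lambda\in\overline{\C_+}\setminus\{0\}$ — note $\Re\lambda\ge0$ and $\Re(1/\lambda)=\Re\bar\lambda/|\lambda|^2\ge0$ — forces $\bD(\bu)\equiv0$, $\dv\bu\equiv0$, $\nabla\rho\equiv0$, and $|\xi'|^2|h|^2=0$; since $|\xi'|\neq0$ this gives $h=0$, hence $u_N|_{x_N=0}=-\lambda h=0$, and then $\bu\equiv0$ (a rigid-motion field that decays is zero), and finally $\rho\equiv0$ from $\lambda\rho=-\dv\bu=0$. This contradicts nontriviality, so $\bM(\xi',\lambda)\neq0$.

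\textbf{Main obstacle.} The delicate point is making the formal energy identity rigorous at the level of the ODE system \eqref{r} at a fixed $\xi'$: one must check that the solutions produced by the formulas \eqref{rho}--\eqref{un} actually lie in the right function classes on $(0,\infty)$ (they do, being finite linear combinations of decaying exponentials $e^{-t_jx_N}$, $e^{-\omega_\lambda x_N}$ with $\Re t_j>0$, $\Re\omega_\lambda>0$ on $\overline{\C_+}\setminus\{0\}$, a fact again traceable to \cite[Sec.~5]{S} and the definition of $s_\pm$), and that all integration-by-parts boundary terms at $x_N=0$ are exactly accounted for by the boundary conditions in \eqref{r}. In particular the Korteweg term $\kappa\Delta\nabla\rho$ in the momentum equation, when tested against $\bu$, produces after two integrations by parts both an interior term $\sim\kappa|\nabla^2\rho|^2$-type and boundary contributions that must be shown to cancel against the $\kappa(\pd_N^2-|\xi'|^2)\hat\rho(0)$ term in the fourth line of \eqref{r}, using $\pd_N\hat\rho(0)=0$ and $\lambda\hat\rho=-\hat\vp$; tracking the $\kappa$-dependent algebra cleanly is where the bulk of the work lies. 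An alternative, if one prefers to avoid the ODE energy method, is to argue directly: $\bM(\xi',\lambda)\neq0$ is equivalent to invertibility of the coefficient matrix of the linear system \eqref{r} determining the exponential amplitudes, and this determinant is (up to the nonzero factor $\fl_1\fl_2$) a polynomial-type expression in $t_1,t_2,\omega_\lambda,|\xi'|^2,\lambda,\sigma$ whose sign/nonvanishing can be extracted by splitting into the three summands in \eqref{m} and showing the first two have the same argument sector while the third is lower order — but the energy route is cleaner and I would carry that one out.
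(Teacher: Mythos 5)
Your proposal takes essentially the same route as the paper: a zero of $\bM(\xi',\lambda)$ is identified with the existence of a nontrivial solution to the homogeneous ODE system \eqref{r} at the fixed frequency $\xi'$, and that is excluded by testing the momentum equation against $\overline{\bu}$, integrating by parts with the boundary conditions, and then examining real and imaginary parts of the resulting Fourier-side energy identity. One small detail you glide over but will need when you carry this out: when $\Re\lambda=0$ the real-part inequality alone does not force $|\xi'|^2|h(0)|^2=0$, since the surface-tension term there carries a factor $\Re\lambda$ (or $\Re(1/\lambda)$ in your normalization), which vanishes on the imaginary axis; the paper handles this by first deducing $\bu=0$, $\Phi=0$ from the real part and then invoking the imaginary-part identity together with $\Im\lambda\neq 0$ (since $\lambda\neq0$) to conclude $h(0)=0$ — your plan already includes taking the imaginary part, so this is just a matter of filling in that case split.
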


\begin{proof}
Let us suppose contradiction that $\bM(\xi', \lambda) = 0$ for some $(\xi', \lambda) \in \R^{N-1} \times (\overline{\C_+} \setminus \{0\})$,
then \eqref{r} with $\eta=0$ has a non-trivial solution.
This implies that there exist $h(0) \neq 0$ and $(\rho(x_N), \bu(x_N)) \neq (0, \bf 0)$ for $x_N>0$ sufficiently smooth and decaying exponentially as $x_N \to \infty$
such that 
\begin{align}
	&\lambda \rho(x_N) + \vp(x_N) = 0, \label{e1}\\
	&\lambda u_j(x_N) - \mu(\pd_N^2 - |\xi'|^2) u_j(x_N) -\nu i\xi_j \vp(x_N) - \kappa i\xi_j(\pd_N^2-|\xi'|^2)\rho(x_N)=0, \label{e2}\\
	&\lambda u_N(x_N) - \mu(\pd_N^2 - |\xi'|^2) u_N(x_N) -\nu \pd_N\vp(x_N) - \kappa \pd_N(\pd_N^2-|\xi'|^2)\rho(x_N)=0, \label{e3}\\
	&\mu(i\xi_j u_N(0) + \pd_N u_j(0))=0, \label{e4}\\
	&2\mu \pd_N u_N(0) + (\nu-\mu) \vp(0) + \kappa(\pd_N^2-|\xi'|^2) \rho(0) + \sigma|\xi'|^2 h(0)=0, \label{e5}\\
	&\pd_N \rho(0)=0, \label{e6}\\
	&\lambda h(0) + u_N(0)=0, \label{e7}
\end{align}
where $\vp(x_N)=\sum_{j=1}^{N-1} i\xi_j u_j(x_N) + \pd_N u_N(x_N)$.
Let
$(a, b)=\int^\infty_0 a(x_N) \overline{b(x_N)}\,dx_N$ and $\|a\|=\sqrt{(a, a)}$ for functions $a=a(x_N)$, $b=b(x_N)$ in $\R_+$,
and let $\Phi=\|\pd_N \vp\|^2 + |\xi'|^2\|\vp\|^2$.

First, we prove
\begin{equation}\label{i}
\begin{aligned}
	&\lambda \sum^{N}_{J=1} \|u_J\|^2 + \mu \sum^{N-1}_{j, k=1}\|i\xi_k u_j\|^2 + \mu \left\|\sum^{N-1}_{j =1}i\xi_j u_j\right\|^2 + 2\mu \|\pd_N u_N\|^2\\
	& \quad +\mu \sum^{N-1}_{j =1}\|\pd_N u_j + i\xi_j u_N\|^2 + (\nu-\mu)\|\vp\|^2 + \frac{\kappa}{|\lambda|^2} \bar{\lambda} \Phi + \bar{\lambda} \sigma |\xi'|^2 |h(0)|^2=0.
\end{aligned}
\end{equation}
Employing the same calculation as in the proof of \cite[Lemma 3.4]{S}, 
\eqref{e2} and \eqref{e3} can be written as 
\begin{align}
	&\lambda u_j(x_N) - \mu\sum^{N-1}_{k=1} i\xi_k(i\xi_k u_j(x_N) +i\xi_j u_k(x_N))
	-\mu \pd_N(\pd_N u_j(x_N) + i\xi_j u_N(x_N)) \nonumber\\
	&\quad -(\nu-\mu) i\xi_j \vp(x_N) 
	- \kappa i\xi_j(\pd_N^2-|\xi'|^2)\rho(x_N)=0, \label{e2'}\\
	&\lambda u_N(x_N) - \mu\sum^{N-1}_{k=1} i\xi_k(i\xi_k u_N(x_N) + \pd_N u_k(x_N))
	-2\mu \pd_N^2 u_N(x_N) \nonumber\\
	&\quad -(\nu-\mu) \pd_N\vp(x_N) - \kappa \pd_N(\pd_N^2-|\xi'|^2)\rho(x_N)=0. \label{e3'}
\end{align}
Multiplying \eqref{e2'} and \eqref{e3'} by $\overline{u_j(x_N)}$ and $\overline{u_N(x_N)}$, respectively, integrating with respect to $x_N$,
 and using integration by parts with \eqref{e4} and \eqref{e5}, we have
\begin{align}
	&\lambda \|u_j\|^2 + \mu\sum^{N-1}_{k=1} (i\xi_k u_j +i\xi_j u_k, i\xi_k u_j)
	+\mu (\pd_N u_N + i\xi_j u_N, \pd_N u_j) \nonumber\\
	&\quad +(\nu-\mu) (\vp, i\xi_j u_j) 
	+ \kappa ((\pd_N^2-|\xi'|^2)\rho, i\xi_j u_j)=0, \nonumber\\
	&\lambda \|u_N\|^2 + \mu\sum^{N-1}_{k=1} (i\xi_k u_N + \pd_N u_k, i\xi_k u_N)
	+2\mu \|\pd_N u_N\|^2 \nonumber\\
	&\quad +(\nu-\mu) (\vp, \pd_N u_N) + \kappa ((\pd_N^2-|\xi'|^2)\rho, \pd_N u_N) +\bar  \lambda \sigma |\xi'|^2 |h(0)|^2=0, \label{uN}
\end{align}
where we have used \eqref{e7} to obtain the last term of the left-hand side of \eqref{uN}.
Summing the above equations, applying integration by parts with \eqref{e6}, and using \eqref{e1},
we have
\begin{align*}
	&\lambda \sum^{N}_{J=1} \|u_J\|^2 + \mu \sum^{N-1}_{j, k=1} (i\xi_k u_j +i\xi_j u_k, i\xi_k u_j)
	+\mu \sum^{N-1}_{j=1} (\pd_N u_j + i\xi_j u_N, \pd_N u_j) \\
	& \quad + \mu \sum^{N-1}_{k=1} (i\xi_k u_N + \pd_N u_k, i\xi_k u_N) + 2\mu \|\pd_N u_N\|^2
	+ (\nu-\mu)\|\vp\|^2 + \frac{\kappa}{\lambda} \Phi + \bar \lambda \sigma |\xi'|^2 |h(0)|^2=0. 
\end{align*}
Noting that
\begin{align*}
	&\sum^{N-1}_{j, k=1} (i\xi_k u_j +i\xi_j u_k, i\xi_k u_j) 
	=\sum^{N-1}_{j, k=1}\|i\xi_k u_j\|^2 + \left\|\sum^{N-1}_{j =1}i\xi_j u_j\right\|^2,\\
	&\sum^{N-1}_{j=1} (\pd_N u_j + i\xi_j u_N, \pd_N u_j) + \sum^{N-1}_{k=1} (i\xi_k u_N + \pd_N u_k, i\xi_k u_N)
	= \sum^{N-1}_{j =1}\|\pd_N u_j + i\xi_j u_N\|^2,
\end{align*}
we have \eqref{i}.

Next, we prove the contradiction.
For this purpose, we take the real part and imaginary part of \eqref{i} as follows:
\begin{align}
	&(\Re \lambda) \sum^{N}_{J=1} \|u_J\|^2 + \mu \sum^{N-1}_{j, k=1}\|i\xi_k u_j\|^2 + \mu \left\|\sum^{N-1}_{j =1}i\xi_j u_j\right\|^2 + 2\mu \|\pd_N u_N\|^2\nonumber\\
	& \quad \mu \sum^{N-1}_{j =1}\|\pd_N u_j + i\xi_j u_N\|^2 + (\nu-\mu)\|\vp\|^2 + \frac{\kappa}{|\lambda|^2} (\Re \lambda) \Phi + (\Re \lambda) \sigma|\xi'|^2 |h(0)|^2=0, \label{re}\\
	&(\Im \lambda) \left(\sum^{N}_{J=1} \|u_J\|^2 - \frac{\kappa}{|\lambda|^2} \Phi - \sigma |\xi'|^2 |h(0)|^2\right)=0. \label{im}
\end{align}
Combining \eqref{re} and
\[
	\|\vp\|^2 \le \sum^{N-1}_{j, k=1} \|i \xi_k u_j\|^2 + \left\|\sum^{N-1}_{j =1}i\xi_j u_j\right\|^2 + 2\|\pd_N u_N\|^2,
\]
we have
\begin{equation*}\label{re'}
	(\Re \lambda) \sum^{N}_{J=1} \|u_J\|^2 + \mu \sum^{N-1}_{j =1}\|\pd_N u_j + i\xi_j u_N\|^2 + \nu\|\vp\|^2 
	+ \frac{\kappa}{|\lambda|^2} (\Re \lambda) \Phi + (\Re \lambda) \sigma|\xi'|^2 |h(0)|^2 \le 0.
\end{equation*}
Thus, since $\Re \lambda \ge 0$, we have $\vp=0$, which implies that 
$\rho=0$ for $\lambda \in \overline{\C_+} \setminus \{0\}$ by \eqref{e1}.
Combining $\vp=0$ and \eqref{re}, we have
\[
	0 = \sum^{N-1}_{j, k=1}\|i\xi_k u_j\|^2 = -|\xi'|^2 \sum^{N-1}_{j=1}\|u_j\|^2,
\]
furnishes $u_j=0$ for $j=1, \ldots, N-1$ if $\xi' \neq 0$. 
Since $\sum^{N-1}_{j =1}\|\pd_N u_j + i\xi_j u_N\|^2=0$, we also have 
\[
	0 = \sum^{N-1}_{j =1}\|i\xi_j u_N\|^2 = -|\xi'|^2 \|u_N\|^2, 
\]
then we have $u_N=0$ if $\xi' \neq 0$.
Moreover, $h(0)=0$ follows from \eqref{re} when $\Re \lambda > 0$. 
Note that $\Im \lambda \neq 0$ when $\Re \lambda = 0$, 
then \eqref{im} implies that $h(0)=0$ 
for $(\xi', \lambda) \in (\R^{N-1} \setminus\{0\}) \times (\overline{\C_+} \setminus \{0\})$
because $\Phi=0$ follows from $\vp=0$.

Summing up, we have $\rho=0$, $\bu=\bf 0$, and $h(0)=0$, which contradicts \eqref{r} with $\eta=0$ has a non-trivial solution when $\xi' \in \R^{N-1} \setminus\{0\}$, $\lambda \in \overline{\C_+} \setminus \{0\}$.
\end{proof}

\subsection{Preliminary results}
In this subsection, we prepare the estimates and recall some results to show the existence of the $\CR$-bounded operator families. 
First, we introduce the definition of the class of symbols.

\begin{definition}
Let $\Lambda \subset \C$, and let $m(\xi', \lambda)$ be a function defined on $\Lambda$
that is infinitely many times differentiable with respect to $\xi'$ and analytic with respect to $\lambda$.

$\thetag1$
$m(\xi', \lambda)$ is called multiplier of order $s$ with type $1$, denoted by $m \in \bM^1_s (\Lambda)$,
if there exists a real number $s$ such that for any multi-index $\alpha'=(\alpha_1, \ldots, \alpha_{N-1}) \in \bN^{N-1}_0$
and $(\xi',\lambda) \in (\R^{N-1}\setminus \{0\}) \times \Lambda$
\[
	|\pd^{\alpha'}_{\xi'}((\tau \pd_\tau)^n m(\xi', \lambda))| \le C(|\lambda|^{1/2}+|\xi'|)^{s-|\alpha'|} \quad (n=0, 1)
\]
with some constant $C=C_{s, \alpha', \Lambda}$.

$\thetag2$
$m(\xi', \lambda)$ is called multiplier of order $s$ with type $2$, denoted by $m \in \bM^2_s (\Lambda)$,
if there exists a real number $s$ such that for any multi-index $\alpha'=(\alpha_1, \ldots, \alpha_{N-1}) \in \bN^{N-1}_0$
and $(\xi',\lambda) \in (\R^{N-1}\setminus \{0\}) \times \Lambda$
\[
	|\pd^{\alpha'}_{\xi'}((\tau \pd_\tau)^n m(\xi', \lambda))| \le C(|\lambda|^{1/2}+|\xi'|)^s|\xi'|^{-|\alpha'|} \quad (n=0, 1)
\]
with some constant $C=C_{s, \alpha', \Lambda}$.

%For any $m \in \bM^j_s (\Lambda)$, $j=1, 2$, we denote $M(m, \Lambda)=\max_{|\alpha'|\le N}C_{s, \alpha', \Lambda}$.
\end{definition}

\begin{remark}\label{remarkm}
Let $s_1, s_2 \in \R$, $\lambda_0 \ge 0$, and $\epsilon \in (0, \pi/2)$.
%
%$\thetag1$
For $m_j \in \bM^1_{s_j}(\Lambda)$~$(j=1, 2)$, we have $m_1 m_2 \in \bM^1_{s_1+s_2}(\Lambda)$.
%
%$\thetag2$
%For $m_j \in \bM^j_{s_j}(\Lambda)$~$(j=1, 2)$, we have $m_1 m_2 \in \bM^2_{s_1+s_2}(\Lambda)$.
%
%$\thetag3$
%For $m_j \in \bM^2_{s_j}(\Lambda)$~$(j=1, 2)$, we have $m_1 m_2 \in \bM^2_{s_1+s_2}(\Lambda)$.
%
\end{remark}
In the next section, we will use the following corollary and lemmas to show $\CR$-boundedness for operator families.
Lemma \ref{lem:t1} was proved by \cite[Lemma 4.8]{S} and
Lemma \ref{lem:t3} was proved by \cite[Lemma 3.5.16]{Shi2020}. 
%%%%%%%%%%%%%%%%%%%
\begin{lemma}\label{lem:t1} Let $1 < q < \infty$, $\lambda_0 \ge 0$, $\epsilon_1 \in (0, \pi/2)$, and $\epsilon_2 \in (\tilde \epsilon_*, \pi/2)$ for $\tilde \epsilon_*$ given in \eqref{angle}. 
For 
\[
	k (\xi', \lambda) \in \bM^1_1(\Sigma_{\epsilon_1, \lambda_0}),
	\quad
	l (\xi', \lambda) \in \bM^1_1(\Sigma_{\epsilon_2, \lambda_0}),
	\quad
	m (\xi', \lambda) \in \bM^1_2(\Sigma_{\epsilon_2, \lambda_0}),
\] 
we define operators $K(\lambda)$, $L(\lambda)$, and $M_j(\lambda)$~$(j=0, 1, 2)$ by
\begin{align*}
	[K(\lambda)f](x) & = \int^\infty_0 \CF^{-1}_{\xi'}[k (\xi', \lambda)
	e^{-\omega_\lambda (x_N+y_N)} \hat f(\xi', y_N)](x')\, dy_N &(\lambda \in \Sigma_{\epsilon_1, \lambda_0}),\\
	[L(\lambda)f](x) & = \int^\infty_0 \CF^{-1}_{\xi'}[l (\xi', \lambda)
	e^{-t_1 (x_N+y_N)} \hat f(\xi', y_N)](x')\, dy_N &(\lambda \in \Sigma_{\epsilon_2, \lambda_0}),\\
	[M_j(\lambda)f](x) & = \int^\infty_0 \CF^{-1}_{\xi'}[m (\xi', \lambda)
	\CM_j(x_N+y_N) \hat f(\xi', y_N)](x')\, dy_N &(\lambda \in \Sigma_{\epsilon_2, \lambda_0}).
\end{align*}
Then we have 
\begin{align*}
	&\CR_{\CL(L_q(\R^N_+))}
	(\{(\tau \pd_\tau)^n K(\lambda) \mid 
	\lambda \in \Sigma_{\epsilon_1, \lambda_0} \}) \leq k,\\
	&\CR_{\CL(L_q(\R^N_+))}
	(\{(\tau \pd_\tau)^n L(\lambda) \mid 
	\lambda \in \Sigma_{\epsilon_2, \lambda_0} \}) \leq l,\\
	&\CR_{\CL(L_q(\R^N_+))}
	(\{(\tau \pd_\tau)^n M_j(\lambda) \mid 
	\lambda \in \Sigma_{\epsilon_2, \lambda_0} \}) \leq m_j
\end{align*}
for $n=0, 1$, where $k$ is some constant depending on $\epsilon_1, \lambda_0, \mu, \nu, \kappa, N, q$
and $l, m_j$ are some constants depending on $\epsilon_2, \lambda_0, \mu, \nu, \kappa, N, q$.
\end{lemma}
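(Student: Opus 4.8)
The plan is to realise each of $K(\lambda)$, $L(\lambda)$, $M_j(\lambda)$ as the Fourier multiplier operator $\CF^{-1}_{\xi'}[\Psi(\xi',\lambda)\,\CF_{x'}\,\cdot\,]$ in the tangential variable $x'$, whose symbol $\Psi(\xi',\lambda)$ is operator valued with values in $\CL(L_q(\R_+))$ (here $x_N\in\R_+$ is the remaining variable, and we use $L_q(\R^{N-1};L_q(\R_+))=L_q(\R^N_+)$), and then to apply the operator valued Fourier multiplier theorem of \cite{W} in its $\CR$-bounded form (cf. \cite{DHP,Shi2020}). Since $L_q(\R_+)$ is a UMD space, it then suffices to prove, for each of the three $\CL(L_q(\R_+))$-valued symbols $\Psi$ built below, that
$$\CR_{\CL(L_q(\R_+))}\bigl(\{\,|\xi'|^{|\alpha'|}\pd^{\alpha'}_{\xi'}(\tau\pd_\tau)^n\Psi(\xi',\lambda)\mid \xi'\in\R^{N-1}\setminus\{0\},\;\lambda\in\Sigma\,\}\bigr)\le C$$
for all $\alpha'\in\N_0^{N-1}$ with $|\alpha'|\le N-1$ and $n=0,1$, where $\Sigma=\Sigma_{\epsilon_1,\lambda_0}$ for the $K$-symbol and $\Sigma=\Sigma_{\epsilon_2,\lambda_0}$ for the $L$- and $M_j$-symbols. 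As preliminaries I would record $\Re\omega_\lambda\ge c(\epsilon_1)(|\lambda|^{1/2}+|\xi'|)$ on $\Sigma_{\epsilon_1,\lambda_0}$ and $\Re t_j\ge c(\epsilon_2)(|\lambda|^{1/2}+|\xi'|)$ on $\Sigma_{\epsilon_2,\lambda_0}$ (the hypothesis $\epsilon_2>\tilde\epsilon_*$ being precisely what keeps $s_j\lambda$, whose argument is $\arg\lambda\pm\tilde\epsilon_*$, off the negative real axis), together with $\omega_\lambda,t_j\in\bM^1_1$, $\omega_\lambda^{-1},t_j^{-1}\in\bM^1_{-1}$, and the Hadamard type representations
$$\CM_0(x_N)=-x_N\!\int_0^1\! e^{-((1-\theta)t_1+\theta t_2)x_N}\,d\theta,\qquad \CM_j(x_N)=-\frac{t_j-\omega_\lambda}{t_2-t_1}\,x_N\!\int_0^1\! e^{-((1-\theta)\omega_\lambda+\theta t_j)x_N}\,d\theta\quad(j=1,2).$$
Here $\alpha\neq0$ gives $s_1\neq s_2$, so $t_2-t_1=(s_2-s_1)\lambda/(t_2+t_1)\neq0$ and $\sim|\lambda|(|\lambda|^{1/2}+|\xi'|)^{-1}$, while $t_j-\omega_\lambda=(s_j-\mu^{-1})\lambda/(t_j+\omega_\lambda)$ is comparable to it ($\kappa\neq\mu\nu$ is used here to ensure $s_j\neq\mu^{-1}$), so that $(t_j-\omega_\lambda)/(t_2-t_1)\in\bM^1_0$; and by convexity $\Re((1-\theta)t_1+\theta t_2)$ and $\Re((1-\theta)\omega_\lambda+\theta t_j)$ are $\ge c(|\lambda|^{1/2}+|\xi'|)$ for $\theta\in[0,1]$.

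For $K(\lambda)$, set $A=|\lambda|^{1/2}+|\xi'|$ and split $k=(k\,\omega_\lambda^{-1})\cdot\omega_\lambda$ with $k\,\omega_\lambda^{-1}\in\bM^1_0$ (Remark \ref{remarkm}); then
$$[K(\lambda)f](x',x_N)=\CF^{-1}_{\xi'}\Bigl[(k\,\omega_\lambda^{-1})(\xi',\lambda)\,e^{-\omega_\lambda x_N}\int_0^\infty\omega_\lambda e^{-\omega_\lambda y_N}\hat f(\xi',y_N)\,dy_N\Bigr](x'),$$
so the symbol is the rank one operator ${\mathbf k}(\xi',\lambda):g\mapsto(k\,\omega_\lambda^{-1})(\xi',\lambda)\,e^{-\omega_\lambda(\cdot)}\bigl(\int_0^\infty\omega_\lambda e^{-\omega_\lambda y_N}g(y_N)\,dy_N\bigr)$. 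From $\|e^{-\omega_\lambda(\cdot)}\|_{L_q(\R_+)}\sim(\Re\omega_\lambda)^{-1/q}\lesssim A^{-1/q}$, $\|\omega_\lambda e^{-\omega_\lambda(\cdot)}\|_{L_{q'}(\R_+)}\sim|\omega_\lambda|(\Re\omega_\lambda)^{-1/q'}\lesssim A^{1/q}$, and $|k\,\omega_\lambda^{-1}|\lesssim1$ one obtains $\|{\mathbf k}(\xi',\lambda)\|_{\CL(L_q(\R_+))}\lesssim1$; distributing $\pd^{\alpha'}_{\xi'}$ and $\tau\pd_\tau$ costs $A^{-1}$ per derivative (by the type $1$ estimates for $k\,\omega_\lambda^{-1}$ and $\omega_\lambda$, each differentiation falling on an exponential producing an extra factor $x_N$ or $y_N$ times a bounded symbol, whose $L_q$ or $L_{q'}$ norm carries exactly the compensating $A^{-1}$), whence $\|A^{|\alpha'|}\pd^{\alpha'}_{\xi'}(\tau\pd_\tau)^n{\mathbf k}(\xi',\lambda)\|_{\CL(L_q(\R_+))}\lesssim1$ for $|\alpha'|\le N-1$, $n=0,1$. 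Since every such operator is rank one, its $\CR$-bound is controlled by its uniform operator norm (cf. \cite{DHP}), which closes this case; $L(\lambda)$ is treated identically, with $\omega_\lambda$ replaced by $t_1$ and $k$ by $l\in\bM^1_1$.

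For $M_j(\lambda)$, substitute the representation of $\CM_j$: with $B_\theta=(1-\theta)\omega_\lambda+\theta t_j$ one has $\CM_j(x_N+y_N)=-\frac{t_j-\omega_\lambda}{t_2-t_1}\int_0^1(x_N+y_N)e^{-B_\theta x_N}e^{-B_\theta y_N}\,d\theta$ (and the analogue of $\CM_0$ with prefactor $1$ and $B_\theta=(1-\theta)t_1+\theta t_2$), and $(x_N+y_N)e^{-B_\theta x_N}e^{-B_\theta y_N}$ splits into the two separated kernels $\bigl(x_Ne^{-B_\theta x_N}\bigr)\otimes\bigl(e^{-B_\theta(\cdot)}\bigr)$ and $\bigl(e^{-B_\theta x_N}\bigr)\otimes\bigl((\cdot)e^{-B_\theta(\cdot)}\bigr)$. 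From $\|x_Ne^{-B_\theta x_N}\|_{L_q}\lesssim A^{-1-1/q}$, $\|(\cdot)e^{-B_\theta(\cdot)}\|_{L_{q'}}\lesssim A^{-1-1/q'}$, $\|e^{-B_\theta(\cdot)}\|_{L_q}\lesssim A^{-1/q}$ and $\|e^{-B_\theta(\cdot)}\|_{L_{q'}}\lesssim A^{-1/q'}$, each rank one piece has operator norm $\lesssim A^{-2}$, so multiplying by $m\in\bM^1_2$ and by the bounded factor $(t_j-\omega_\lambda)/(t_2-t_1)$ restores order $0$, and integrating over $\theta\in[0,1]$ preserves the bound. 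Distributing $\pd^{\alpha'}_{\xi'}$ and $\tau\pd_\tau$ again costs $A^{-1}$ per derivative (on $m$, type $1$, and on $B_\theta$, $\omega_\lambda$, $t_j$, $(t_j-\omega_\lambda)/(t_2-t_1)$, with the same ``extra $A^{-1}$ per exponent differentiation'' bookkeeping), so $\|A^{|\alpha'|}\pd^{\alpha'}_{\xi'}(\tau\pd_\tau)^n{\mathbf m}_j(\xi',\lambda)\|_{\CL(L_q(\R_+))}\lesssim1$; since ${\mathbf m}_j$ and its derivatives are integral averages in $\theta$ of rank $\le2$ operators, their $\CR$-bounds are controlled by these uniform norm bounds. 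Feeding ${\mathbf k}$, ${\mathbf l}$, ${\mathbf m}_j$ into the multiplier theorem of the first paragraph (and using $|\xi'|\le A$) yields the three asserted estimates, with constants depending only on the listed quantities; this argument follows the scheme of \cite[Lemma 4.8]{S}.

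The step I expect to be the main obstacle is the precise two sided gain bookkeeping: one must verify that the order of the symbol ($1$ for $k,l$, $2$ for the $\CM_j$-kernels) is matched \emph{exactly} by the decay pulled out of the exponential factors --- one unit $A^{-1/q}$ from the output $x_N$-integral, the complementary contribution from the $y_N$-integral --- so that each of ${\mathbf k}$, ${\mathbf l}$, ${\mathbf m}_j$ and all of their $\xi'$- and $\tau\pd_\tau$-derivatives is a genuinely uniformly bounded finite rank operator on $L_q(\R_+)$. This is exactly the point at which the exponents in the definitions of $\bM^1_1$ and $\bM^1_2$ are calibrated, and where the non-degeneracies $\alpha\neq0$ (so that $s_1\neq s_2$, hence $t_2-t_1$ and the kernels $\CM_j$ are meaningful) and $\kappa\neq\mu\nu$ (so that $s_j\neq\mu^{-1}$, hence $(t_j-\omega_\lambda)/(t_2-t_1)$ remains bounded) intervene.
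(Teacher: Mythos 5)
The strategy of passing to a tangential Fourier multiplier with $\CL(L_q(\R_+))$-valued symbol and proving Mikhlin-type estimates in $\xi'$ is a legitimate idea, and most of your ingredients are correct: the Hadamard-type representations of $\CM_0$ and $\CM_j$, the observations $t_2-t_1=(s_2-s_1)\lambda/(t_2+t_1)$ and $t_j-\omega_\lambda=(s_j-\mu^{-1})\lambda/(t_j+\omega_\lambda)$, the role of $\alpha\neq 0$ and $\kappa\neq\mu\nu$, the lower bounds $\Re\omega_\lambda, \Re t_j\gtrsim |\lambda|^{1/2}+|\xi'|$, and the elementary $L_q$/$L_{q'}$ bounds $\|e^{-B(\cdot)}\|_{L_q}\lesssim A^{-1/q}$, $\|(\cdot)e^{-B(\cdot)}\|_{L_q}\lesssim A^{-1-1/q}$ are all fine and match the computations needed.

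The gap is the transition from those uniform operator-norm bounds to the $\CR$-bound of the symbol family, namely your claim ``since every such operator is rank one, its $\CR$-bound is controlled by its uniform operator norm (cf.\ \cite{DHP}).'' This is false in general. The operator-valued $\CR$-Mikhlin theorem (which is what you would need to conclude $\CR$-boundedness of the family $\{K(\lambda)\}_\lambda$) requires that the collections $\{|\xi'|^{|\alpha'|}\pd^{\alpha'}_{\xi'}(\tau\pd_\tau)^n\Psi(\xi',\lambda)\}$ be themselves $\CR$-bounded in $\CL(L_q(\R_+))$, not merely uniformly bounded in operator norm, and for families of rank-one operators on $L_q$ these two notions are genuinely different. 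A simple counterexample: fix $g\in L_{q'}(\R_+)$, let $h_1,\dots,h_n$ be disjointly supported $L_q$-normalized functions, and set $T_j f=\langle f,g\rangle h_j$, each of norm $\|g\|_{q'}$. Taking $f_j=f$ independent of $j$, the left-hand side of the $\CR$-bound inequality is $|\langle f,g\rangle|\,\bigl(\int_0^1\|\sum_j r_j(u)h_j\|_q^q du\bigr)^{1/q}\sim |\langle f,g\rangle|\, n^{1/q}$, while the right-hand side is $\|f\|_q\,\bigl(\int_0^1|\sum_j r_j(u)|^q du\bigr)^{1/q}\sim \|f\|_q\, n^{1/2}$; for $q<2$ the ratio $n^{1/q-1/2}\to\infty$, so the $\CR$-bound of $\{T_j\}$ is not controlled by $\sup_j\|T_j\|$. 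The same obstruction potentially applies to your symbols ${\mathbf k},{\mathbf l},{\mathbf m}_j$, whose one-dimensional ranges $\mathrm{span}\{e^{-\omega_\lambda(\cdot)}\}$ rotate with $(\xi',\lambda)$, so you cannot invoke the scalar-multiplier identification $\CR$-bound $=$ sup-norm. You would need to establish $\CR$-boundedness of those symbol families directly, which is essentially the content of \cite[Lemma 4.8]{S} and of the corresponding half-space lemmas in the Shibata--Shimizu/Enomoto--Shibata framework; the route taken there avoids the operator-valued multiplier theorem and instead exploits the specific kernel structure (reduction to scalar Fourier multipliers on the full space plus an integral estimate in the normal variable, or a Calder\'on--Zygmund-type argument). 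Either redo the argument along those lines, or replace the ``rank-one $\Rightarrow$ norm controls $\CR$-bound'' step by an actual proof of $\CR$-boundedness of the symbol families $\{|\xi'|^{|\alpha'|}\pd^{\alpha'}_{\xi'}(\tau\pd_\tau)^n\Psi(\xi',\lambda)\}$.
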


Lemma \ref{lem:t1} implies that the following corollary.

\begin{corollary}\label{cor:t1}
Let $1 < q < \infty$, $\lambda_0 > 0$, $\epsilon_1 \in (0, \pi/2)$, and $\epsilon_2 \in (\tilde \epsilon_*, \pi/2)$ for $\tilde \epsilon_*$ given in \eqref{angle}. 

$\thetag1$
For 
\[
	k (\xi', \lambda) \in \bM^1_{-1}(\Sigma_{\epsilon_1, \lambda_0}),
	\quad
%	l (\xi', \lambda) \in \bM^1_1(\Sigma_{\epsilon_2, \lambda_0}),
%	\quad
	m (\xi', \lambda) \in \bM^1_0(\Sigma_{\epsilon_2, \lambda_0}),
\] 
we define operators $K(\lambda)$ and $M_l(\lambda)$~$(l=1, 2)$ by
\begin{align*}
	[K(\lambda)f](x) & = \int^\infty_0 \CF^{-1}_{\xi'}[k (\xi', \lambda)
	e^{-\omega_\lambda (x_N+y_N)} \hat f(\xi', y_N)](x')\, dy_N &(\lambda \in \Sigma_{\epsilon_1, \lambda_0}),\\
%	[L(\lambda)f](x) & = \int^\infty_0 \CF^{-1}_{\xi'}[l (\xi', \lambda)
%	e^{-t_1 (x_N+y_N)} \hat f(\xi', y_N)](x')\, dy_N &(\lambda \in \Sigma_{\epsilon_2, \lambda_0}),\\
	[M_l(\lambda)f](x) & = \int^\infty_0 \CF^{-1}_{\xi'}[m (\xi', \lambda)
	\CM_l(x_N+y_N) \hat f(\xi', y_N)](x')\, dy_N &(\lambda \in \Sigma_{\epsilon_2, \lambda_0}).
\end{align*}
Then we have 
\begin{align*}
	&\CR_{\CL(L_q(\R^N_+), H^{2-j}_q(\R^N_+))}
	(\{(\tau \pd_\tau)^n \lambda^{j/2} K(\lambda) \mid 
	\lambda \in \Sigma_{\epsilon_1, \lambda_0} \}) \leq k,\\
%	&\CR_{\CL(L_q(\R^N_+))}
%	(\{(\tau \pd_\tau)^n L(\lambda) \mid 
%	\lambda \in \Sigma_{\epsilon_2, \lambda_0} \}) \leq l,\\
	&\CR_{\CL(L_q(\R^N_+), H^{2-j}_q(\R^N_+))}
	(\{(\tau \pd_\tau)^n \lambda^{j/2} M_l(\lambda) \mid 
	\lambda \in \Sigma_{\epsilon_2, \lambda_0} \}) \leq m_l
\end{align*}
for  $j=0, 1, 2$, $n=0, 1$, where $k$ is some constant depending on $\epsilon_1, \lambda_0, \mu, \nu, \kappa, N, q$
and $m_l$ are some constants depending on $\epsilon_2, \lambda_0, \mu, \nu, \kappa, N, q$.

$\thetag2$
For 
\[
%	k (\xi', \lambda) \in \bM^1_{-1}(\Sigma_{\epsilon_1, \lambda_0}),
%	\quad
	l (\xi', \lambda) \in \bM^1_{-2}(\Sigma_{\epsilon_2, \lambda_0}),
	\quad
	m (\xi', \lambda) \in \bM^1_{-1}(\Sigma_{\epsilon_2, \lambda_0}),
\] 
we define operators $L(\lambda)$ and $M_0(\lambda)$ by
\begin{align*}
%	[L(\lambda)f](x) & = \int^\infty_0 \CF^{-1}_{\xi'}[l (\xi', \lambda)
%	e^{-\omega_\lambda (x_N+y_N)} \hat f(\xi', y_N)](x')\, dy_N &(\lambda \in \Sigma_{\epsilon_1, \lambda_0}),\\
	[L(\lambda)f](x) & = \int^\infty_0 \CF^{-1}_{\xi'}[l (\xi', \lambda)
	e^{-t_1 (x_N+y_N)} \hat f(\xi', y_N)](x')\, dy_N &(\lambda \in \Sigma_{\epsilon_2, \lambda_0}),\\
	[M_0(\lambda)f](x) & = \int^\infty_0 \CF^{-1}_{\xi'}[m (\xi', \lambda)
	\CM_0(x_N+y_N) \hat f(\xi', y_N)](x')\, dy_N &(\lambda \in \Sigma_{\epsilon_2, \lambda_0}).
\end{align*}
Then we have 
\begin{align*}
%	&\CR_{\CL(L_q(\R^N_+), H^{2-j}_q(\R^N_+))}
%	(\{(\tau \pd_\tau)^n \lambda^{j/2} K(\lambda) \mid 
%	\lambda \in \Sigma_{\epsilon_1, \lambda_0} \}) \leq k,\\
	&\CR_{\CL(L_q(\R^N_+), H^{3-j}_q(\R^N_+))}
	(\{(\tau \pd_\tau)^n \lambda^{j/2} L(\lambda) \mid 
	\lambda \in \Sigma_{\epsilon_2, \lambda_0} \}) \leq l,\\
	&\CR_{\CL(L_q(\R^N_+), H^{3-j}_q(\R^N_+))}
	(\{(\tau \pd_\tau)^n \lambda^{j/2} M_0(\lambda) \mid 
	\lambda \in \Sigma_{\epsilon_2, \lambda_0} \}) \leq m_0
\end{align*}
for $j=0, 1, 2$, $n=0, 1$, where $l$ and $m_0$ are some constants depending on $\epsilon_2, \lambda_0, \mu, \nu, \kappa, N, q$.

\end{corollary}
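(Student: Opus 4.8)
The plan is to deduce all four families of estimates from Lemma~\ref{lem:t1} by expanding the target Sobolev norms into $L_q$-norms of spatial derivatives and then tracking the orders of the Fourier symbols that appear. By the definition of the $H^{m}_q(\R^N_+)$-norm and Minkowski's inequality in $L_p([0,1])$ (cf.\ Lemma~\ref{lem:5.3}~$\thetag1$), for each of the operators and each $j\in\{0,1,2\}$ it suffices to bound the $\CR$-norm in $\CL(L_q(\R^N_+))$ of $\{(\tau\pd_\tau)^n\,\pd_x^\alpha\,\lambda^{j/2}T(\lambda)\mid\lambda\in\Sigma\}$ for $n=0,1$ and every multi-index $\alpha=(\alpha',\alpha_N)$ with $|\alpha|\le m$, where $(T,\Sigma,m)$ runs over $(K,\Sigma_{\epsilon_1,\lambda_0},2-j)$, $(M_l,\Sigma_{\epsilon_2,\lambda_0},2-j)$, $(L,\Sigma_{\epsilon_2,\lambda_0},3-j)$ and $(M_0,\Sigma_{\epsilon_2,\lambda_0},3-j)$.

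For the exponential kernels, $\pd_x^\alpha(\lambda^{j/2}K(\lambda))$, resp.\ $\pd_x^\alpha(\lambda^{j/2}L(\lambda))$, is again an operator of the $K(\lambda)$-, resp.\ $L(\lambda)$-, type of Lemma~\ref{lem:t1}, with symbol $\lambda^{j/2}(i\xi')^{\alpha'}(-\omega_\lambda)^{\alpha_N}k$, resp.\ $\lambda^{j/2}(i\xi')^{\alpha'}(-t_1)^{\alpha_N}l$. For the divided-difference kernels I would use the identities
\[
\pd_{x_N}\CM_l(x_N)=-t_l\,\CM_l(x_N)+c_l(\xi',\lambda)\,e^{-\omega_\lambda x_N}\ \ (l=1,2),\qquad
\pd_{x_N}\CM_0(x_N)=-t_2\,\CM_0(x_N)-e^{-t_1 x_N},
\]
with $c_l=(\omega_\lambda-t_l)/(t_2-t_1)$; iterating them shows that $\pd_{x_N}^{\alpha_N}\bigl(\CM_l(x_N+y_N)\bigr)$ is a finite sum of $\CM_l(x_N+y_N)$ with $\bM^1_{\alpha_N}$-coefficients and $e^{-\omega_\lambda(x_N+y_N)}$ with $\bM^1_{\alpha_N-1}$-coefficients, and that $\pd_{x_N}^{\alpha_N}\bigl(\CM_0(x_N+y_N)\bigr)$ is a finite sum of $\CM_0(x_N+y_N)$ with $\bM^1_{\alpha_N}$-coefficients and $e^{-t_1(x_N+y_N)}$ with $\bM^1_{\alpha_N-1}$-coefficients. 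Here one uses $i\xi_l,\omega_\lambda,t_1,t_2,\lambda^{1/2}\in\bM^1_1$ and $1/(\omega_\lambda+t_l)\in\bM^1_{-1}$ on the sectors in question ($\Re\omega_\lambda$ and $\Re t_l$ being comparable to $|\lambda|^{1/2}+|\xi'|$ there; see Lemma~\ref{est:fl} and cf.\ \cite{S}), so that, after cancelling the factor $\lambda$ via $t_2-t_1=(s_2-s_1)\lambda/(t_2+t_1)$ and $\omega_\lambda-t_l=(\mu^{-1}-s_l)\lambda/(\omega_\lambda+t_l)$, one obtains $c_l=\dfrac{\mu^{-1}-s_l}{s_2-s_1}\cdot\dfrac{t_2+t_1}{\omega_\lambda+t_l}\in\bM^1_0$ (note $s_1\neq s_2$ because $\alpha\neq0$); the product rule of Remark~\ref{remarkm} then yields the stated symbol classes for all the coefficients.

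Counting orders, for $|\alpha|\le 2-j$ the operator $\pd_x^\alpha(\lambda^{j/2}K(\lambda))$ is of $K(\lambda)$-type with symbol in $\bM^1_{j+|\alpha|-1}\subset\bM^1_1$, and $\pd_x^\alpha(\lambda^{j/2}M_l(\lambda))$ is a finite sum of $M_l(\lambda)$-type operators with symbols in $\bM^1_{j+|\alpha|}\subset\bM^1_2$ and of $K(\lambda)$-type operators with symbols in $\bM^1_{j+|\alpha|-1}\subset\bM^1_1$; for $|\alpha|\le 3-j$ the operator $\pd_x^\alpha(\lambda^{j/2}L(\lambda))$ is of $L(\lambda)$-type with symbol in $\bM^1_{j+|\alpha|-2}\subset\bM^1_1$, and $\pd_x^\alpha(\lambda^{j/2}M_0(\lambda))$ is a finite sum of $M_0(\lambda)$-type operators with symbols in $\bM^1_{j+|\alpha|-1}\subset\bM^1_2$ and of $L(\lambda)$-type operators with symbols in $\bM^1_{j+|\alpha|-2}\subset\bM^1_1$. (The inclusions $\bM^1_s\subset\bM^1_{s'}$ for $s\le s'$ hold because $|\lambda|^{1/2}+|\xi'|\ge\lambda_0^{1/2}>0$ on these sectors.) Applying Lemma~\ref{lem:t1} to each summand — which delivers the $\CR$-bounds for $n=0$ and $n=1$ simultaneously — and summing them via Lemma~\ref{lem:5.3}~$\thetag1$ gives the corollary, the $\CR$-bounds depending only on $N$, $q$, $\epsilon_1$, $\epsilon_2$, $\lambda_0$, $\mu$, $\nu$, $\kappa$.

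The only step I expect to be more than routine bookkeeping is the second one: one must check that $x_N$-differentiation of the divided-difference kernels $\CM_0,\CM_1,\CM_2$ never leaves the three prototypes of Lemma~\ref{lem:t1} and that the coefficients thereby created are genuine multipliers of the indicated orders. This is precisely where the assumption $\alpha\neq0$ enters — it forces $s_1\neq s_2$, which is what allows the apparently dangerous factor $1/(t_2-t_1)$ occurring in these identities to be rewritten as a bounded multiplier after a cancellation of $\lambda$; everything else is elementary order arithmetic.
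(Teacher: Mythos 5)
Your proof is correct and fills in exactly the bookkeeping the paper leaves implicit when it says ``Lemma~\ref{lem:t1} implies the following corollary.''  The reduction to $L_q$-bounds on $\partial_x^\alpha\lambda^{j/2}T(\lambda)$, the use of the recursions for $\partial_{x_N}\CM_l$ and $\partial_{x_N}\CM_0$ (which appear verbatim in the paper just after \eqref{vt}, with $c_l=-\mathfrak r_l$), the order counting, and the use of $\lambda_0>0$ to get $\bM^1_s\subset\bM^1_{s'}$ for $s\le s'$ are all what the authors have in mind; one small cosmetic remark is that the non-degeneracy $t_2\ne t_1$ needed to make sense of $\CM_l$ at all already requires $\alpha\ne0$, so this hypothesis is not introduced by the differentiation step but is baked into the definitions from the start.
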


\begin{lemma}\label{lem:t3} Let $1 < q < \infty$ and $j=1, 2$, and 
let $\Lambda$ be a domain in $\C$. Let $\varphi$ and $\psi$ be two $C^\infty_0((-2, 2))$ functions. 
For $m(\xi', \lambda) \in \bM^2_0(\Lambda)$,
we define operators $\Phi_j(\lambda)$ by
\begin{align*}
	[\Phi_1(\lambda)f](x) & = \varphi(x_N) \int^\infty_0\CF^{-1}_{\xi'}[m(\xi', \lambda)
	e^{-|\xi'| (x_N+y_N)} \psi(y_N) \hat f(\xi', y_N)](x')\,dy_N, \\
	[\Phi_2(\lambda)f](x) & = \varphi(x_N) \int^\infty_0\CF^{-1}_{\xi'}[m(\xi', \lambda)
	|\xi'|e^{-|\xi'| (x_N+y_N)} \psi(y_N) \hat f(\xi', y_N)](x')\,dy_N.
\end{align*}
Then we have 
\begin{align*}
	\CR_{\CL(L_q(\R^N_+))}
	(\{(\tau \pd_\tau)^n \Phi_j(\lambda) \mid 
	\lambda \in \Lambda \}) \leq r_j,
\end{align*}
for $n=0, 1$, and some constant $r_j$ depending on $\Lambda, \varphi, \psi, N, q$.
\end{lemma}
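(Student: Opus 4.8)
The statement is \cite[Lemma~3.5.16]{Shi2020}; here is how the argument goes. The plan is to read $\Phi_j(\lambda)$ as a Fourier multiplier in the tangential variable $x'$ whose symbol takes values in $\CL(E)$ with $E:=L_q(\R_+)$ acting on the normal variable $x_N$, and then to invoke the $\CR$-bounded form of the operator-valued Fourier multiplier theorem (cf.\ \cite{DHP}), available because $E$ is a UMD space.

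First I would record the symbol estimates. Put
\[
n_1(\xi',\lambda;x_N,y_N)=m(\xi',\lambda)\,e^{-|\xi'|(x_N+y_N)},\qquad
n_2(\xi',\lambda;x_N,y_N)=m(\xi',\lambda)\,|\xi'|\,e^{-|\xi'|(x_N+y_N)}.
\]
Since the factor $|\xi'|^{\,j-1}e^{-|\xi'|(x_N+y_N)}$ is independent of $\lambda$, we have $(\tau\pd_\tau)^n n_j=\big((\tau\pd_\tau)^n m\big)\,|\xi'|^{\,j-1}e^{-|\xi'|(x_N+y_N)}$, and because $m\in\bM^2_0(\Lambda)$ means $|\pd_{\xi'}^{\alpha'}((\tau\pd_\tau)^n m)|\le C|\xi'|^{-|\alpha'|}$ for $n=0,1$, the Leibniz rule together with the elementary bound $\big((x_N+y_N)|\xi'|\big)^k e^{-(x_N+y_N)|\xi'|/2}\le C_k$ — which absorbs the powers of $x_N+y_N$ produced when $\xi'$-derivatives hit the exponential — yields, for every multi-index $\alpha'$, every $n\in\{0,1\}$, and all $\lambda\in\Lambda$, $x_N,y_N>0$,
\[
\big|\pd_{\xi'}^{\alpha'}\big((\tau\pd_\tau)^n n_j\big)(\xi',\lambda;x_N,y_N)\big|\le C_{\alpha'}\,|\xi'|^{(j-1)-|\alpha'|}\,e^{-(x_N+y_N)|\xi'|/2}\qquad(j=1,2),
\]
with $C_{\alpha'}$ independent of $\lambda,x_N,y_N$.

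Next I would recast the operators. Since $\varphi=\varphi(x_N)$ and $\psi=\psi(y_N)$, one has, under the isometric identification $L_q(\R^N_+)=L_q(\R^{N-1},E)$,
\[
\Phi_j(\lambda)f=\CF^{-1}_{\xi'}\big[\,\Sigma_j(\xi',\lambda)\hat f(\xi',\cdot)\,\big],\qquad
\big(\Sigma_j(\xi',\lambda)g\big)(x_N)=\varphi(x_N)\int_0^\infty n_j(\xi',\lambda;x_N,y_N)\,\psi(y_N)\,g(y_N)\,dy_N,
\]
so $\Sigma_j(\xi',\lambda)\in\CL(E)$ is the integral operator with kernel $\varphi(x_N)n_j(\xi',\lambda;x_N,y_N)\psi(y_N)$, holomorphic in $\lambda$. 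As $(\tau\pd_\tau)^n$ commutes with $\CF^{-1}_{\xi'}$, the multiplier theorem reduces the claim to showing that, for $|\alpha'|\le N-1$ and $n\in\{0,1\}$, the families
\[
\Big\{\,|\xi'|^{|\alpha'|}\pd_{\xi'}^{\alpha'}\big((\tau\pd_\tau)^n\Sigma_j(\xi',\lambda)\big)\ :\ \xi'\in\R^{N-1}\setminus\{0\},\ \lambda\in\Lambda\,\Big\}
\]
are $\CR$-bounded in $\CL(E)$, uniformly in $\alpha'$ and $n$.

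The decisive step — and the place where the hypotheses really enter — is to obtain these $\CR$-bounds by domination. By the symbol estimates the operator in the last display has kernel bounded in modulus by $C_{\alpha'}|\varphi(x_N)|\,|\xi'|^{\,j-1}e^{-(x_N+y_N)|\xi'|/2}\,|\psi(y_N)|$. For $j=1$ this is $\le C_{\alpha'}|\varphi(x_N)||\psi(y_N)|$, the kernel of the fixed rank-one operator $S_1g=|\varphi|\int_0^\infty|\psi|\,g$, bounded on $L_q(\R_+)$ since $\varphi\in L_q(\R_+)$ and $\psi\in L_{q'}(\R_+)$. For $j=2$, using $\sup_{s>0}se^{-s(x_N+y_N)/2}\le C(x_N+y_N)^{-1}$ and $\operatorname{supp}\varphi,\operatorname{supp}\psi\subset(-2,2)$, the kernel is $\le C_{\alpha'}\,\mathbf 1_{(0,2)}(x_N)(x_N+y_N)^{-1}\mathbf 1_{(0,2)}(y_N)$, a truncation of the Hardy–Hilbert kernel, hence the kernel of a bounded positivity-preserving operator $S_2$ on $L_q(\R_+)$. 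In both cases every member of the family is dominated pointwise by the fixed bounded positivity-preserving operator $S_j$; by Khintchine's inequality the $\CR$-bound of such a family is $\lesssim_q\|S_j\|_{\CL(L_q(\R_+,\ell^2))}$, and $\|S_j\|_{\CL(L_q(\R_+,\ell^2))}=\|S_j\|_{\CL(L_q(\R_+))}$ because $\big(\sum_i(S_jg_i)^2\big)^{1/2}\le S_j\big((\sum_i g_i^2)^{1/2}\big)$ for $g_i\ge0$ by positivity of $S_j$. Inserting this into the multiplier theorem gives the $\CR$-bound for $\{(\tau\pd_\tau)^n\Phi_j(\lambda):\lambda\in\Lambda\}$, $n=0,1$. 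The genuinely delicate point is the case $\Phi_2$: the raw tangential symbol $n_2$ has order $+1$, so after the Mikhlin normalization the normal-variable operators are \emph{not} dominated by a fixed bounded operator on all of $\R_+$ — this is salvaged only because the cut-offs confine $(x_N,y_N)$ to a bounded box, on which $|\xi'|e^{-|\xi'|(x_N+y_N)/2}\lesssim(x_N+y_N)^{-1}$ still defines a bounded positive operator, and one must run the estimate at the level of square functions, not merely operator norms, to get $\CR$-boundedness rather than just uniform boundedness.
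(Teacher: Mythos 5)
Your argument is correct and, to my knowledge, it recovers the strategy behind the cited reference. The paper itself offers no proof for this lemma (it simply invokes \cite[Lemma 3.5.16]{Shi2020}), so there is no in-paper argument to compare against; but the route you take --- viewing $\Phi_j(\lambda)$ as a tangential Fourier multiplier with symbol $\Sigma_j(\xi',\lambda)\in\CL(L_q(\R_+))$, verifying the Mikhlin-type $\CR$-bound on $\{\xi'^{\alpha'}\pd_{\xi'}^{\alpha'}\Sigma_j\}$ by pointwise domination of the kernel by a fixed positive bounded operator on $L_q(\R_+)$, and then applying the $\CR$-bounded form of the operator-valued Fourier multiplier theorem (valid since $L_q$ is UMD with property $(\alpha)$) --- is exactly the standard machinery for this class of lemmas in Shibata's framework, and the Khintchine-plus-positivity step you use to pass from kernel domination to an $\CR$-bound is the right device.

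One correction to the concluding commentary: you have the roles of $\Phi_1$ and $\Phi_2$ reversed. After the Mikhlin normalization the dominating kernel for $j=2$ is $|\xi'|e^{-|\xi'|(x_N+y_N)/2}\le C(x_N+y_N)^{-1}$, and the Stieltjes operator with kernel $(x_N+y_N)^{-1}$ is already bounded on $L_q(\R_+)$ for every $1<q<\infty$ without any truncation, so the cut-offs $\varphi,\psi$ are not what rescues $\Phi_2$. It is $\Phi_1$ where the cut-offs are indispensable: the dominating kernel there is only $e^{-|\xi'|(x_N+y_N)/2}\le 1$, which degenerates as $|\xi'|\to 0$ and gives an unbounded operator on $L_q(\R_+)$ unless one restricts $(x_N,y_N)$ to the box $[0,2]^2$, after which it becomes the bounded rank-one operator $g\mapsto|\varphi|\int|\psi|g$. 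The substance of your proof is unaffected since you do use the cut-offs throughout, but the remark about where the hypotheses ``really enter'' should be swapped.
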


Next, we recall the class of several symbols proved by Shibata and Shimizu \cite[Lemma 5.2]{SS} and Saito \cite[Corollary 4.7]{S}, in order to apply the above lemmas,
where these symbols
are defined in \eqref{notation}.
\begin{lemma}\label{est:fl}
Let $s \in \R$. The following assertions hold true.

$\thetag1$
Let $\epsilon \in (0, \pi/2)$. Then we have 
\begin{equation*}\label{omega}
	\omega_\lambda^s \in \bM^1_s(\Sigma_{\epsilon, 0}).
\end{equation*}

$\thetag2$
Let $\epsilon \in (\tilde \epsilon_*, \pi/2)$. Then we have 
\begin{equation*}\label{tpqa}
	t^s_j \in \bM^1_s(\Sigma_{\epsilon, 0}), \quad
	\fp_j (\xi', \lambda), \fq_j  (\xi', \lambda) \in \bM^1_1(\Sigma_{\epsilon, 0}), \quad
	\fa (\xi', \lambda) \in \bM^1_1(\Sigma_{\epsilon, 0})
\end{equation*}
for $j=1, 2$.

$\thetag3$
There is a constant $\epsilon_* \in (\tilde \epsilon_*, \pi/2)$ such that for any $\epsilon \in (\epsilon_*, \pi/2)$
\begin{equation}\label{fl}
	\fl_j(\xi', \lambda)^s \in \bM^1_{6s}(\Sigma_{\epsilon, 0})
\end{equation}
for $j=1, 2$. 
%there exist a positive constant $C$ such that for any $(\xi', \lambda) \in \R^{N-1} \times \Sigma_{\epsilon, 0}$, we have for $j=1, 2$
%\begin{equation}\label{el}
%	|\fl_j(\xi', \lambda)| \ge C(|\lambda|^{1/2} + |\xi'|)^6.
%\end{equation}
\end{lemma}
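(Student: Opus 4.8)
The plan is to reduce the whole lemma to one elementary sectorial estimate, $|\lambda+a|\ge\sin(\epsilon/2)\,(|\lambda|+a)$ for $\lambda\in\Sigma_{\epsilon,0}$ and $a\ge0$, together with the stability of the classes $\bM^1_\bullet$ under sums of equal order and under products with orders adding (Remark \ref{remarkm}). For $\thetag1$: since $\mu>0$ we have $\mu^{-1}\lambda\in\Sigma_{\epsilon,0}$, hence $\omega_\lambda^2=|\xi'|^2+\mu^{-1}\lambda$ satisfies $|\arg\omega_\lambda^2|\le|\arg\mu^{-1}\lambda|<\pi-\epsilon$ (adding a nonnegative real number only decreases $|\arg|$) and $|\omega_\lambda^2|\sim|\lambda|+|\xi'|^2\sim(|\lambda|^{1/2}+|\xi'|)^2$; thus the principal power $\omega_\lambda^s=(\omega_\lambda^2)^{s/2}$ is holomorphic in $\lambda$ and $C^\infty$ in $\xi'\ne0$, and differentiating the relation $\omega_\lambda^2=|\xi'|^2+\mu^{-1}\lambda$ (using $\tau\partial_\tau=i\tau\partial_\lambda$ and $|\tau|\le|\lambda|$) one checks inductively that every $\partial^{\alpha'}_{\xi'}(\tau\partial_\tau)^n\omega_\lambda^s$ is a finite sum of terms (a power of $\omega_\lambda$) times a monomial in $(\xi',\tau)$, of modulus $\lesssim(|\lambda|^{1/2}+|\xi'|)^{s-|\alpha'|}$. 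For $\thetag2$: the roots $s_1,s_2$ are fixed nonzero constants — positive reals when $\alpha>0$ (as $s_1s_2=\kappa^{-1}>0$ and $s_1+s_2=(\mu+\nu)/\kappa>0$) and complex conjugates with $|\arg s_j|=\tilde\epsilon_*$ when $\alpha<0$ — so for $\epsilon>\tilde\epsilon_*$ one has $s_j\lambda\in\Sigma_{\epsilon-\tilde\epsilon_*,0}$ with $|s_j\lambda|\sim|\lambda|$, and the argument just used for $\omega_\lambda$ applies verbatim to $t_j^2=|\xi'|^2+s_j\lambda$, giving $t_j^s\in\bM^1_s(\Sigma_{\epsilon,0})$; since a constant lies in $\bM^1_0$, Remark \ref{remarkm} then yields $\fp_j,\fq_j,\fa\in\bM^1_1(\Sigma_{\epsilon,0})$ directly from their defining formulas.

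For $\thetag3$ I would split the estimate on $\fl_j$ into a cheap upper bound and a delicate lower bound. The upper bound $\fl_j\in\bM^1_6(\Sigma_{\epsilon,0})$, for any $\epsilon>\tilde\epsilon_*$, is purely formal: expanding the definition, $\fl_j$ is a finite sum of products of the atoms $\lambda\in\bM^1_2$, $t_1,t_2,\omega_\lambda\in\bM^1_1$, and $|\xi'|^2\in\bM^1_2$, and in every summand the orders add up to $6$ (for instance $\mu^{-2}\lambda\,t_j(t_j+\omega_\lambda)(t_2^2+t_1t_2+t_1^2-|\xi'|^2)$ has order $2+1+1+2$, and $4\omega_\lambda|\xi'|^2$ times the curly bracket has order $1+2+3$), so Remark \ref{remarkm} applies.

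The real content is the matching lower bound $|\fl_j(\xi',\lambda)|\ge c\,(|\lambda|^{1/2}+|\xi'|)^6$, and this is where the sector must be shrunk. The decisive structural fact is homogeneity: $t_j(\theta\xi',\theta^2\lambda)=\theta\,t_j(\xi',\lambda)$ and $\omega_{\theta^2\lambda}(\theta\xi')=\theta\,\omega_\lambda(\xi')$ for $\theta>0$, whence $\fl_j(\theta\xi',\theta^2\lambda)=\theta^6\fl_j(\xi',\lambda)$. Normalizing $|\xi'|^2+|\lambda|=1$, it therefore suffices to show $\inf|\fl_j|>0$ over the compact set $K_\epsilon=\{(\xi',\lambda):|\xi'|^2+|\lambda|=1,\ \lambda\in\overline{\Sigma_{\epsilon,0}}\}$, where at $\lambda=0$ the value of $\fl_j$ is read off as the continuous limit $8\mu^{-1}|\xi'|^6\ne0$. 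On $K_{\pi/2}$, i.e. for $\Re\lambda\ge0$, $\fl_j$ does not vanish: up to a nonzero factor it is the boundary (Lopatinski-type) determinant of the surface-tension-free half-space problem \eqref{r5}, whose non-vanishing on $\overline{\C_+}\setminus\{0\}$ underlies Proposition \ref{without} and can also be re-derived by the energy argument of \cite[Lemma 3.4]{S}, exactly as was done for $\bM$ above. Hence $\fl_j$ is continuous and zero-free on the compact $K_{\pi/2}$, so $|\fl_j|\ge\delta>0$ there and, by continuity, $|\fl_j|\ge\delta/2$ on an open neighborhood of $K_{\pi/2}$; this neighborhood contains $K_\epsilon$ for every $\epsilon$ close enough to $\pi/2$, and we fix $\epsilon_*\in(\tilde\epsilon_*,\pi/2)$ so that this holds for all $\epsilon\in(\epsilon_*,\pi/2)$. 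Rescaling then gives the lower bound on all of $\Sigma_{\epsilon,0}$. I expect this compactness-plus-openness step, together with checking that $\fl_j$ stays zero-free up to (and slightly past) the imaginary axis, to be the main obstacle; the rest is routine.

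It remains to pass from the two bounds to $\fl_j^s\in\bM^1_{6s}(\Sigma_{\epsilon,0})$ for arbitrary $s\in\R$. Since $\fl_j$ depends on $\xi'$ only through $|\xi'|$, it may be regarded as a zero-free holomorphic function on the simply connected domain $(0,\infty)\times\Sigma_{\epsilon,0}$, which therefore carries a holomorphic logarithm; set $\fl_j^s:=\exp(s\log\fl_j)$. By the Leibniz and chain rules, $\partial^{\alpha'}_{\xi'}(\tau\partial_\tau)^n(\fl_j^s)$ is a finite sum of the undifferentiated term $\fl_j^s$ together with terms $\fl_j^{\,s-m}\prod_{i=1}^m\partial^{\beta'_i}_{\xi'}(\tau\partial_\tau)^{n_i}\fl_j$ with $\sum_i\beta'_i=\alpha'$, $\sum_i n_i=n$ and $1\le m\le|\alpha'|+n$; using $|\fl_j|^{s-m}\sim(|\lambda|^{1/2}+|\xi'|)^{6(s-m)}$ (which needs the lower bound when $s-m<0$ and the upper bound when $s-m>0$) together with $\fl_j\in\bM^1_6$, each such term is $O\big((|\lambda|^{1/2}+|\xi'|)^{6s-|\alpha'|}\big)$ for $n=0,1$. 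That is precisely $\fl_j^s\in\bM^1_{6s}(\Sigma_{\epsilon,0})$, and the $\epsilon_*$ so obtained is the constant named in the statement (and used in Proposition \ref{without} and Theorem \ref{main}); this self-contained route coincides with the approach of \cite[Lemma 5.2]{SS} and \cite[Corollary 4.7]{S}.
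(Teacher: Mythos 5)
The paper itself gives no proof of this lemma — it simply cites Shibata–Shimizu \cite[Lemma 5.2]{SS} for the $\omega_\lambda$, $t_j$, $\fp_j$, $\fq_j$, $\fa$ bounds and Saito \cite[Corollary 4.7]{S} for the $\fl_j$ bound — and your reconstruction follows exactly the standard route those references take: sectorial estimates plus Leibniz/Bell bookkeeping for $\thetag1$–$\thetag2$, and for $\thetag3$ the homogeneity/compactness lower bound (the scaling $\fl_j(\theta\xi',\theta^2\lambda)=\theta^6\fl_j(\xi',\lambda)$, restriction to $|\xi'|^2+|\lambda|=1$, zero-freeness on $\overline{\C_+}\setminus\{0\}$ including the endpoint value $8\mu^{-1}|\xi'|^6$ at $\lambda=0$, and opening the sector by continuity to fix $\epsilon_*$), followed by the chain-rule passage to $\fl_j^s$. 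The one step you delegate — that $\fl_j$, as the Lopatinski determinant of the surface-tension-free problem \eqref{r5}, is zero-free on $\overline{\C_+}\setminus\{0\}$ — is precisely \cite[Lemma 3.4]{S}, so your argument is correct and consistent with the approach the paper relies on.
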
 

The lower bound of $\bM(\xi', \lambda)$ is obtained by Lemma \ref{est:fl}, where 
$\bM(\xi', \lambda)$ is defined in \eqref{m}.
\begin{lemma}\label{lem:em}
Let $\epsilon \in (\epsilon_*, \pi/2)$ for the same constant $\epsilon_*$ as in \eqref{fl}.
There exists a positive constant $C$ and $\lambda_0 >0$ such that 
\begin{equation}\label{em}
	|\bM(\xi', \lambda)| \ge C(|\lambda|+|\xi|)(|\lambda|^{1/2} + |\xi'|)^{13}
\end{equation}
for any $(\xi', \lambda) \in (\R^{N-1}\setminus\{0\}) \times \Sigma_{\epsilon, \lambda_0}$.
Here $\lambda_0$ and $C$ depend on $\epsilon$, $\mu$, $\nu$, $\kappa$, and 
$\sigma$.
\end{lemma}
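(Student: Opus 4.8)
The plan is to establish the lower bound \eqref{em} by exploiting the structure of $\bM(\xi',\lambda)$ displayed in \eqref{m} together with the multiplier orders collected in Lemma \ref{est:fl}. The key point is that the dominant term is the first one, $2\mu\omega_\lambda\lambda\kappa^{-1}\fl_1\fl_2$, whose modulus we can bound from below: by Lemma \ref{est:fl} $\thetag3$, $\fl_j \in \bM^1_6(\Sigma_{\epsilon,0})$, and more importantly a \emph{lower} bound $|\fl_j(\xi',\lambda)| \ge c(|\lambda|^{1/2}+|\xi'|)^6$ holds on $\Sigma_{\epsilon,0}$ for $\epsilon \in (\epsilon_*,\pi/2)$ (this is part of what Lemma \ref{est:fl} $\thetag3$ delivers, or follows from the explicit formula for $\fl_j$ in \eqref{notation} by the same argument that gives \eqref{fl}, since $\fl_j^{-1}\in\bM^1_{-6}$ already encodes such a bound). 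Combined with $|\omega_\lambda| \ge c(|\lambda|^{1/2}+|\xi'|)$ and $|\lambda| \ge c(|\lambda|+|\xi|)$ when $|\xi'|\lesssim|\lambda|^{1/2}$, this would give the leading term a modulus of order $(|\lambda|+|\xi|)(|\lambda|^{1/2}+|\xi'|)^{13}$, matching the right side of \eqref{em}.

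The difficulty is that $\lambda$ ranges over a sector of half-angle close to $\pi$, so $\lambda$ and the $\fl_j$ are genuinely complex and a naive triangle-inequality bound $|A+B+C| \ge |A| - |B| - |C|$ need not work if $|B|, |C|$ are comparable to $|A|$. I would therefore split the analysis into regions. In the region where $|\xi'|^2 \le \delta |\lambda|$ for small $\delta$, the first term dominates the other two (the second and third terms carry an extra factor $\sigma|\xi'|^2$ against the $\lambda$ in the first term, so they are $O(\delta)$ relative to it after accounting for the matching polynomial weights), and one concludes by the triangle inequality — here taking $\lambda_0$ large is harmless, in fact not even needed. In the complementary region $|\xi'|^2 \ge \delta|\lambda|$, one has $|\lambda| \le \delta^{-1}|\xi'|^2$, so $(|\lambda|+|\xi|) \sim (|\lambda|^{1/2}+|\xi'|)^2$ and $(|\lambda|+|\xi|)(|\lambda|^{1/2}+|\xi'|)^{13}\sim (|\lambda|^{1/2}+|\xi'|)^{15}$; then one must show $|\bM(\xi',\lambda)| \gtrsim (|\lambda|^{1/2}+|\xi'|)^{15}$, and here the second term $\sigma|\xi'|^2\kappa^{-1}\fl_1\fl_2$ (which is of order $|\xi'|^2\cdot(|\lambda|^{1/2}+|\xi'|)^{12}$, hence $\sim(|\lambda|^{1/2}+|\xi'|)^{14}$ at worst but $\sim|\xi'|^2(|\lambda|^{1/2}+|\xi'|)^{12}$) together with the third provide the needed size. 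This second region is the main obstacle: the three terms all scale the same way in $|\xi'|$ when $|\lambda|$ is comparable to $|\xi'|^2$, so a cancellation argument is required rather than pure size comparison.

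To handle that, the cleanest route I expect is an indirect argument mirroring the preceding uniqueness lemma: the factor $\bM(\xi',\lambda)^{-1}$ is exactly $\hat h(0)/(\fm(\xi',\lambda)^{-1}\hat\eta(0))$ up to the nonzero factor $2\mu\omega_\lambda\kappa^{-1}\fl_1\fl_2$, and we already know from the lemma just proved that $\bM(\xi',\lambda)\neq 0$ on $(\R^{N-1}\setminus\{0\})\times(\overline{\C_+}\setminus\{0\})$. Continuity and the explicit homogeneity-type scaling of $\bM$ (each constituent symbol has a definite order in $(|\lambda|^{1/2}+|\xi'|)$, so $\bM$ behaves like a polynomial of combined degree $15$ in that quantity) then upgrade nonvanishing on the closed half-plane to the quantitative bound \eqref{em} on a slightly smaller sector $\Sigma_{\epsilon,\lambda_0}$: one normalizes by setting $|\lambda|^{1/2}+|\xi'|=1$ via the scaling $\xi'\mapsto r\xi'$, $\lambda\mapsto r^2\lambda$, notes that under this scaling $\bM$ transforms by a clean power $r^{15}$ up to lower-order remainders that are controlled once $\lambda_0$ is large (these remainders come from the non-homogeneous pieces such as the $-|\xi'|^2$ inside $t_j^2+t_jt_1+\cdots$ versus $\omega_\lambda^2$), and then invokes compactness of the normalized parameter set on which $\bM\neq 0$ and is continuous, plus a limiting argument for the behavior as $|\xi'|/|\lambda|^{1/2}\to 0$ or $\to\infty$ (covered by the direct first region above), to get the uniform positive lower bound. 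Choosing $\epsilon_*$ as in \eqref{fl} guarantees all the symbol estimates of Lemma \ref{est:fl} are available throughout, and enlarging $\lambda_0$ absorbs the finitely many lower-order corrections; the constants $C$ and $\lambda_0$ then depend only on $\epsilon,\mu,\nu,\kappa,\sigma$ as claimed.
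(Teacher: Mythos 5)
Your two-region plan mirrors the paper's, which splits by whether $|\xi'| \le r|\lambda|$ or $|\xi'| \ge r|\lambda|$ (rather than comparing $|\xi'|^2$ with $\delta|\lambda|$); the first region is unproblematic in either version. In the second region, however, two things go wrong. The asserted equivalence $|\lambda|+|\xi'|\sim(|\lambda|^{1/2}+|\xi'|)^2$ is false — for $|\xi'|$ large and $|\lambda|\lesssim|\xi'|$ the left side is of order $|\xi'|$ while the right is of order $|\xi'|^2$ — so your reduction to showing $|\bM(\xi',\lambda)|\gtrsim(|\lambda|^{1/2}+|\xi'|)^{15}$ aims at a bound that is not true: the paper's expansion there gives $\bM(\xi',\lambda)=64\mu^{-2}\kappa^{-1}|\xi'|^{13}(2\mu\lambda+\sigma|\xi'|)\bigl(1+O(\lambda_0^{-1/2})\bigr)$, which is of size $|\xi'|^{13}(|\lambda|+|\xi'|)$, not $|\xi'|^{15}$. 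Beyond that, the compactness-plus-scaling route does not close: under $(\xi',\lambda)\mapsto(r\xi',r^2\lambda)$ the first term of $\bM$ is homogeneous of degree $15$ while the remaining two are of degree $14$, so after normalizing $|\lambda|^{1/2}+|\xi'|=1$ a free dilation parameter survives, and nonvanishing on a compact set of normalized variables does not preclude cancellation between the degree-$15$ and degree-$14$ pieces at some dilation; moreover, the nonvanishing established in the previous lemma covers only $\overline{\C_+}\setminus\{0\}$, which is strictly smaller than the sector $\Sigma_{\epsilon,\lambda_0}$ for $\epsilon<\pi/2$.

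The paper resolves the second region by a direct asymptotic computation: $|\xi'|\ge r|\lambda|$ together with $|\lambda|\ge\lambda_0$ forces $|\xi'|\gg|\lambda|^{1/2}$, so $t_j=\omega_\lambda=|\xi'|(1+O(\lambda_0^{-1/2}))$ and the three terms of $\bM$ assemble into $64\mu^{-2}\kappa^{-1}|\xi'|^{13}(2\mu\lambda+\sigma|\xi'|)$ up to a small relative error. The decisive observation, absent from your outline, is that the sector geometry yields $|2\mu\lambda+\sigma|\xi'||\ge\sin(\epsilon/2)(2\mu|\lambda|+\sigma|\xi'|)$: a positive real number $\sigma|\xi'|$ and a complex $\lambda$ with $|\arg\lambda|\le\pi-\epsilon$ cannot cancel, and this is exactly what keeps the degree-$15$ and degree-$14$ contributions from interfering destructively. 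Any scaling or compactness argument would have to reproduce precisely this fact, and the preceding nonvanishing lemma alone does not supply it.
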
 

\begin{proof}
%We determine $\lambda_0$ below.  
First, we consider the case that $|\xi'|/|\lambda| \le r$ for $(\xi', \lambda) \in \R^{N-1} \times \Sigma_{\epsilon, 0}$
for some sufficiently small positive number $r$ % and some large positive number $\lambda_1$
determined below. 
Lemma \ref{est:fl} implies that there exist constants $C$ and $c$ such that
\begin{equation*}
\begin{aligned}
	|\bM(\xi', \lambda)| 
	&\ge 2\mu |\omega_\lambda||\lambda|\kappa^{-1} |\fl_1(\xi', \lambda)||\fl_2(\xi', \lambda)|\\
	&\enskip - \sigma |\xi'|^2 \kappa^{-1} |\fl_1(\xi', \lambda)||\fl_2(\xi', \lambda)|\\
	&\enskip -\sigma |\xi'|^2 |t_1||t_2|(|\omega_\lambda^2|+|\xi|^2)|\fa(\xi', \lambda)|
	|s_2||\fl_2(\xi', \lambda)||\fq_1(\xi', \lambda)|\\
	&\enskip -\sigma |\xi'|^2 |t_1||t_2|(|\omega_\lambda^2|+|\xi|^2)|\fa(\xi', \lambda)|
	|s_1||\fl_1(\xi', \lambda)||\fq_2(\xi', \lambda)|\\
	&\ge c|\lambda|(|\lambda|^{1/2}+|\xi'|)^{13}\{1-C|\lambda|^{-1}(|\lambda|^{1/2}+|\xi'|)^{-1}|\xi'|^2\}\\
%	&\ge c|\lambda| (|\lambda|^{1/2}+|\xi'|)^{13}\{1/2-C|\lambda|^{-1}(|\lambda|^{1/2}+|\xi'|)^{-1}|\xi'|^2\}\\
	&\ge c|\lambda|(|\lambda|^{1/2}+|\xi'|)^{13}(1-Cr)
\end{aligned}
\end{equation*}
provided $|\xi'| \le r|\lambda|$, %where we have chosen $\lambda_1>0$ such that
%$c|\lambda|^{-1}(|\lambda|^{1/2}+|\xi'|)^{-1}m \le 1/2$ for $\lambda \in \Sigma_{\epsilon, \lambda_1}$,
then we have chosen $r$ sufficiently small that $1-Cr>1/2$, which furnishes \eqref{em}
for any $(\xi', \lambda) \in (\R^{N-1}\setminus\{0\}) \times \Sigma_{\epsilon, 0}$.

Next, we consider the case that $|\xi'|/|\lambda| \ge r$ for $(\xi', \lambda) \in \R^{N-1} \times \Sigma_{\epsilon, \lambda_0}$. 
Note that $|\xi'| \ge r|\lambda|^{1/2} \lambda_0^{1/2}$, 
then choosing $\lambda_0$ large enough, we have
\[
	t_j = |\xi'| (1+O(\lambda_0^{-1/2})), \quad
	\omega_\lambda = |\xi'| (1+O(\lambda_0^{-1/2})).
\]
Thus, we also have
\begin{align*}
	\fl_j (\xi', \lambda) &= 8\mu^{-1} |\xi'|^6 (1+O(\lambda_0^{-1/2})),\\
	\fa(\xi', \lambda) &= \frac{2\kappa^{-1}}{s_2-s_1}|\xi'| (1+ O(\lambda_0^{-1/2})),\\
	\fq_j(\xi', \lambda) &= 2s_j |\xi'| (1+O(\lambda_0^{-1/2})),
\end{align*}
furnishes that there exists a positive constant $K$ such that
\[
	\left|\frac{\bM(\xi', \lambda)-64\mu^{-2}\kappa^{-1}|\xi'|^{13}(2\mu \lambda  + \sigma|\xi'|)}{64\mu^{-2}\kappa^{-1}|\xi'|^{13}(2\mu \lambda  + \sigma|\xi'|)}\right|
	\le K \lambda_0^{-1/2}
\]
for $|\lambda| \ge \lambda_0$ and sufficient large $\lambda_0$.
Therefore, we have
\begin{align*}
|\bM(\xi', \lambda)| 
	&\ge 64\mu^{-2}\kappa^{-1} |\xi'|^{13} \left|2\mu \lambda  + \sigma|\xi'|\right|
	-64\mu^{-2}\kappa^{-1}|\xi'|^{13}(2\mu |\lambda| + \sigma|\xi'|) K \lambda_0^{-1/2} \\
	&\ge 64\sin (\epsilon/2) \mu^{-2}\kappa^{-1}|\xi'|^{13}(2\mu |\lambda| + \sigma|\xi'|)
	-64\mu^{-2}\kappa^{-1}|\xi'|^{13}(2\mu |\lambda| + \sigma|\xi'|) K \lambda_0^{-1/2}.
\end{align*}
Choosing $\lambda_0 >0$ so large that $\sin (\epsilon/2)/2 - K \lambda_0^{-1/2} \ge 0$, we have 
\[
	|\bM(\xi', \lambda)| \ge 32(\sin \epsilon/2)\mu^{-2}\kappa^{-1}|\xi'|^{13}(2\mu |\lambda| + \sigma|\xi'|).
\]
Then \eqref{em} holds true provided by $|\xi'| \ge r |\lambda|$. 
This completes the proof of Lemma \ref{lem:em}.
%Finally, choosing $\lambda_0 = \max\{\lambda_1, \lambda_2\}$, 
%we can conclude the proof of Lemma \ref{lem:em}.
\end{proof}

Note that $|\xi'|^2 \in \bM^1_2(\Sigma_{\epsilon, 0})$.
The following corollary is proved by Bell's formula, Lemma \ref{est:fl}, and Lemma \ref{lem:em}.

\begin{corollary}\label{mm}
Let $\epsilon \in (\epsilon_*, \pi/2)$ for the same constant $\epsilon_*$ as in \eqref{fl}.
Then there exist some constants $\lambda_0 >0$ and $C$ such that for any multi-index 
$\alpha' \in \N^{N-1}_0$ and 
$(\xi', \lambda) \in (\R^{N-1}\setminus\{0\}) \times \Sigma_{\epsilon, \lambda_0}$
\[
	|\pd^{\alpha'}_{\xi'}((\tau \pd_\tau)^n \bM(\xi', \lambda)^{-1})| 
	\le C(|\lambda|+|\xi'|)^{-1}(|\lambda|^{1/2}+|\xi'|)^{-13-|\alpha'|},
\]
where $C$ depend on $\epsilon$, $\alpha'$, $\mu$, $\nu$, $\kappa$, and $\sigma$.
\end{corollary}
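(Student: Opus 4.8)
The plan is to write $\bM(\xi',\lambda)^{-1}=g\bigl(\bM(\xi',\lambda)\bigr)$ with $g(z)=1/z$ and to differentiate by the Fa\`a di Bruno (Bell) formula. Then $\pd^{\alpha'}_{\xi'}\bM^{-1}$ is a finite linear combination of terms
\[
	\bM(\xi',\lambda)^{-(k+1)}\prod_{i=1}^{k}\pd^{\gamma'_i}_{\xi'}\bM(\xi',\lambda),
	\qquad \gamma'_1+\cdots+\gamma'_k=\alpha',\quad |\gamma'_i|\ge1,
\]
and the case $n=1$ reduces to the same kind of expression via $(\tau\pd_\tau)\bM^{-1}=-\bM^{-2}(\tau\pd_\tau)\bM$ and the Leibniz rule, using that $\tau\pd_\tau$ commutes with $\pd_{\xi'}$. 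So the proof needs only two inputs: a good upper multiplier bound on $\bM$ and its $\tau\pd_\tau$-derivative, and the lower bound on $|\bM|$ from Lemma \ref{lem:em}.

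For the upper bound I would verify, term by term in \eqref{m}, that on $\Sigma_{\epsilon,\lambda_0}$ with $\lambda_0\ge1$
\[
	|\pd^{\alpha'}_{\xi'}((\tau\pd_\tau)^n\bM(\xi',\lambda))|\le C(|\lambda|^{1/2}+|\xi'|)^{15-|\alpha'|}\qquad(n=0,1).
\]
By Lemma \ref{est:fl}, the fact that $|\xi'|^2\in\bM^1_2(\Sigma_{\epsilon,0})$, and the product rule of Remark \ref{remarkm}, one has $\omega_\lambda,t_j\in\bM^1_1$, $\lambda=\mu(\omega_\lambda^2-|\xi'|^2)\in\bM^1_2$, $\omega_\lambda^2+|\xi'|^2\in\bM^1_2$, $\fa,\fq_j\in\bM^1_1$, $\fl_j\in\bM^1_6$, so the first term of \eqref{m} lies in $\bM^1_{15}$ and the remaining two in $\bM^1_{14}$; since $|\lambda|^{1/2}+|\xi'|\ge\lambda_0^{1/2}\ge1$ on $\Sigma_{\epsilon,\lambda_0}$, the order-$14$ pieces are dominated by the order-$15$ bound, which gives the displayed estimate. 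For the lower bound I would just quote Lemma \ref{lem:em}, which on $\Sigma_{\epsilon,\lambda_0}$ (enlarging $\lambda_0$ if necessary, and using $|\lambda|+|\xi'|\le(|\lambda|^{1/2}+|\xi'|)^2$ there) yields both
\[
	|\bM^{-1}|\le C(|\lambda|+|\xi'|)^{-1}(|\lambda|^{1/2}+|\xi'|)^{-13}
	\quad\text{and}\quad
	|\bM^{-1}|\le C(|\lambda|^{1/2}+|\xi'|)^{-15}.
\]

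With these in hand the estimate is pure bookkeeping. In a generic Fa\`a di Bruno term I would bound exactly one factor $\bM^{-1}$ by the sharp estimate $C(|\lambda|+|\xi'|)^{-1}(|\lambda|^{1/2}+|\xi'|)^{-13}$, each of the remaining $k$ factors $\bM^{-1}$ by $C(|\lambda|^{1/2}+|\xi'|)^{-15}$, and the product $\prod_i\pd^{\gamma'_i}_{\xi'}\bM$ by $C(|\lambda|^{1/2}+|\xi'|)^{15k-|\alpha'|}$; the $(|\lambda|^{1/2}+|\xi'|)$-exponents then telescope to $-13-|\alpha'|$ and precisely one factor $(|\lambda|+|\xi'|)^{-1}$ survives, which is the claimed bound after summing the finitely many terms. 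The case $n=1$ is handled identically, the extra $\bM^{-1}$ contained in $\bM^{-2}$ being balanced exactly by the extra order-$15$ factor $(\tau\pd_\tau)\bM$.

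The only thing that needs care — and the step I expect to be the actual work — is this power count: one must check that the privileged factor $(|\lambda|+|\xi'|)^{-1}$ is consumed exactly once and that all the other exponents cancel. There is no analytic obstacle; once Lemma \ref{est:fl} and Lemma \ref{lem:em} are available, the corollary follows by these algebraic manipulations.
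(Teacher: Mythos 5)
Your proposal is correct and is precisely the argument the paper intends: the paper itself offers no written-out proof, only the one-line remark that the corollary ``is proved by Bell's formula, Lemma~\ref{est:fl}, and Lemma~\ref{lem:em},'' and your write-up supplies exactly the missing details---the order-$15$ upper multiplier bound on $\bM$ from Lemma~\ref{est:fl} together with $|\xi'|^2\in\bM^1_2$ (using $\lambda_0\ge1$ to absorb the order-$14$ terms), the lower bound of Lemma~\ref{lem:em} in both the sharp form and the form $|\bM^{-1}|\le C(|\lambda|^{1/2}+|\xi'|)^{-15}$, and the Fa\`a di Bruno power count in which the privileged factor $(|\lambda|+|\xi'|)^{-1}$ is used exactly once. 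The power-counting in the generic term, including the $n=1$ case via $(\tau\pd_\tau)\bM^{-1}=-\bM^{-2}(\tau\pd_\tau)\bM$ and the Leibniz rule, checks out.
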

Lemma \ref{est:fl} and Corollary \ref{mm} imply that the following corollary. 
\begin{corollary}\label{fm}
Let $\epsilon \in (\epsilon_*, \pi/2)$ for the same constant $\epsilon_*$ as in \eqref{fl}.
Then there exist some constants $\lambda_0 \ge 1$ and $C$ such that for any multi-index 
$\alpha' \in \N^{N-1}_0$ and 
$(\xi', \lambda) \in (\R^{N-1}\setminus\{0\}) \times \Sigma_{\epsilon, \lambda_0}$
\[
	|\pd^{\alpha'}_{\xi'}((\tau \pd_\tau)^n \fm(\xi', \lambda))| 
	\le C(|\lambda|+|\xi'|)^{-1}(|\lambda|^{1/2}+|\xi'|)^{-|\alpha'|},
\]
where $C$ depend on $\epsilon$, $\alpha'$, $\mu$, $\nu$, $\kappa$, and $\sigma$.
\end{corollary}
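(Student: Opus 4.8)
The plan is to read the estimate off directly from the representation
\[
	\fm(\xi', \lambda) = 2\mu\kappa^{-1}\,\omega_\lambda\,\fl_1(\xi', \lambda)\,\fl_2(\xi', \lambda)\,\bM(\xi', \lambda)^{-1}
\]
supplied by \eqref{m}, combining Lemma \ref{est:fl}, Remark \ref{remarkm}, and Corollary \ref{mm}. First I would fix $\epsilon \in (\epsilon_*, \pi/2)$ and choose $\lambda_0 \ge 1$ at least as large as the constant furnished by Corollary \ref{mm}, so that $\bM(\xi', \lambda)^{-1}$ is well-defined on $(\R^{N-1}\setminus\{0\})\times\Sigma_{\epsilon, \lambda_0}$ and satisfies
\[
	|\pd^{\alpha'}_{\xi'}((\tau\pd_\tau)^n \bM(\xi', \lambda)^{-1})| \le C(|\lambda|+|\xi'|)^{-1}(|\lambda|^{1/2}+|\xi'|)^{-13-|\alpha'|}
\]
for $n = 0, 1$ there.

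Next I would handle the numerator. By Lemma \ref{est:fl} we have $\omega_\lambda \in \bM^1_1(\Sigma_{\epsilon, 0})$ and, taking $s = 1$ in \eqref{fl}, $\fl_j(\xi', \lambda) \in \bM^1_6(\Sigma_{\epsilon, 0})$ for $j = 1, 2$; iterating the product rule of Remark \ref{remarkm} then gives $\omega_\lambda\fl_1\fl_2 \in \bM^1_{13}(\Sigma_{\epsilon, 0})$, that is,
\[
	|\pd^{\alpha'}_{\xi'}((\tau\pd_\tau)^n(\omega_\lambda\fl_1\fl_2))| \le C(|\lambda|^{1/2}+|\xi'|)^{13-|\alpha'|}
\]
for $n = 0, 1$. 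Since $\Sigma_{\epsilon, \lambda_0}\subset\Sigma_{\epsilon, 0}$, this bound is also valid on the smaller region.

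Finally I would expand $\pd^{\alpha'}_{\xi'}((\tau\pd_\tau)^n\fm)$ using the Leibniz rule in $\xi'$ together with the fact that $\tau\pd_\tau$ is a derivation, so that $(\tau\pd_\tau)^n$ of a product distributes over its factors for $n = 0, 1$. Every term produced is, up to a combinatorial constant, a product
\[
	\pd^{\beta'}_{\xi'}((\tau\pd_\tau)^{n_1}(\omega_\lambda\fl_1\fl_2))\cdot\pd^{\gamma'}_{\xi'}((\tau\pd_\tau)^{n_2}\bM(\xi', \lambda)^{-1})
\]
with $\beta'+\gamma' = \alpha'$ and $n_1+n_2 = n$; bounding the two factors by the two displayed estimates and multiplying yields
\[
	C(|\lambda|^{1/2}+|\xi'|)^{13-|\beta'|}(|\lambda|+|\xi'|)^{-1}(|\lambda|^{1/2}+|\xi'|)^{-13-|\gamma'|} = C(|\lambda|+|\xi'|)^{-1}(|\lambda|^{1/2}+|\xi'|)^{-|\alpha'|},
\]
which is exactly the asserted bound, and summing over the finitely many terms only enlarges $C$. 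I do not expect a real obstacle here: the content is essentially the bookkeeping of the Leibniz expansion and the observation that the order $+13$ of the numerator cancels the order $-13$ of $\bM^{-1}$ in the $(|\lambda|^{1/2}+|\xi'|)$ scale, leaving the sharper weight $(|\lambda|+|\xi'|)^{-1}$ from Corollary \ref{mm} intact. The only point needing care is precisely to retain that $(|\lambda|+|\xi'|)^{-1}$ factor rather than coarsening it to $(|\lambda|^{1/2}+|\xi'|)^{-1}$, since the statement records it explicitly.
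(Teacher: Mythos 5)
Your proof is correct and is essentially the same argument the paper intends: the paper disposes of Corollary \ref{fm} with the one-line remark that it follows from Lemma \ref{est:fl} and Corollary \ref{mm}, and your Leibniz bookkeeping---pairing $\omega_\lambda\fl_1\fl_2 \in \bM^1_{13}$ against the $(|\lambda|+|\xi'|)^{-1}(|\lambda|^{1/2}+|\xi'|)^{-13-|\alpha'|}$ bound on $\bM^{-1}$---just makes that explicit. You also correctly keep the sharper $(|\lambda|+|\xi'|)^{-1}$ weight instead of coarsening it, which is the only point where one could slip.
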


\section{The proof of Proposition \ref{with}}\label{sec:rbdd}
In this section, we prove \eqref{est:rbdd}, which is the $\CR$-boundedness for the operators $\CA_2(\lambda)$,
$\CB_2(\lambda)$, and $\CC_2(\lambda)$ arising from solution formulas \eqref{sol} and \eqref{h} for \eqref{r8}.
Note that \eqref{family} in Proposition \ref{with} $\thetag1$ is obtained by \eqref{est:rbdd} due to Definition \ref{dfn2}.
For $Z \in \{\omega_\lambda, t_1, |\xi'|\}$, let us review the Volevich trick, which is the following equalities:
\begin{equation}\label{vt}
\begin{aligned}
	e^{-Z x_N} \hat \eta(\xi', 0) &= \int^\infty_0 Ze^{-Z(x_N + y_N)} \hat \eta(\xi, y_N)\,dy_N
	-\int^\infty_0 e^{-Z(x_N + y_N)} \widehat{\pd_N \eta}(\xi', y_N)\,dy_N,\\
	\CM_l(x_N) \hat \eta(\xi', 0) &= \int^\infty_0 t_l \CM_l(x_N+y_N) \hat \eta(\xi, y_N)\,dy_N
	+\int^\infty_0 \fr_l e^{-\omega_\lambda(x_N+y_N)} \hat \eta(\xi', y_N)\,dy_N\\
	&\enskip -\int^\infty_0 \CM_l(x_N+y_N) \widehat{\pd_N \eta}(\xi', y_N)\,dy_N,\\
	\CM_0(x_N) \hat \eta(\xi', 0) &= \int^\infty_0 t_2 \CM_0(x_N+y_N) \hat \eta(\xi, y_N)\,dy_N
	+\int^\infty_0 e^{-t_1(x_N+y_N)} \hat \eta(\xi', y_N)\,dy_N\\
	&\enskip -\int^\infty_0 \CM_0(x_N+y_N) \widehat{\pd_N \eta}(\xi', y_N)\,dy_N,
\end{aligned}
\end{equation}
where $l=1, 2$ and we have used 
\begin{align*}
	\pd_N \CM_l(z_N) &= -t_l\CM_l(z_N) - \fr_l(\xi', \lambda) e^{-\omega_\lambda z_N},
	\quad
	\fr_l(\xi', \lambda)=\frac{(s_l -\mu^{-1})(t_2+t_1)}{(s_2-s_1)(t_l + \omega_\lambda)},\\
	\pd_N \CM_0(z_N) &= -t_2 \CM_0(z_N)-e^{-t_1z_N}
\end{align*} 
for $z_N>0$, which is obtained by \eqref{notation}.

\subsection{The $\CR$-boundedness for $\CA_2(\lambda)$}\label{a2}
Recall the solution formula \eqref{sol}:
\[
\rho = \CF^{-1}_{\xi'}\left[\vartheta_1 |\xi'|^2 \fm(\xi', \lambda) e^{-t_1x_N} \hat \eta(0)\right](x')
	+ \CF^{-1}_{\xi'}\left[\vartheta_2 |\xi'|^2 \fm(\xi', \lambda)\CM_0(x_N) \hat \eta(0)\right](x'),
\]
with 
\begin{align*}
\vartheta_1(\xi', \lambda)&= \frac{t_1s_1s_2 (t_1+\omega_\lambda)(\omega_\lambda^2+|\xi'|^2)}{\mu \fl_1(\xi',\lambda)},\\
\vartheta_2(\xi', \lambda)&= \frac{s_1s_2 t_1^2(t_1+\omega_\lambda)(\omega_\lambda^2+|\xi'|^2)}{\mu \fl_1(\xi', \lambda)}.
\end{align*}
In view of \eqref{vt} and $|\xi'|^2=-\sum^{N-1}_{k=1}(i\xi_k)^2$, we set
\begin{align*}
	\rho=\CA_2(\lambda) \eta
	=\CA_2^1(\lambda)(-\Delta' \eta) + \CA_2^2(\lambda)(\nabla' \pd_N \eta), 
\end{align*}
where
\begin{align*}
	\CA_{2}^1(\lambda)(-\Delta' \eta)
	&=\int_0^\infty \CF^{-1}_{\xi'}[\vartheta_1(\xi', \lambda)\fm(\xi', \lambda) t_1 e^{-t_{1}(x_N+y_N)}\widehat{(-\Delta') \eta}(\xi', y_N)](x')\,dy_N\\
%	&=\int^\infty_0 \CF^{-1}_{\xi'}[\vartheta_1 (\xi', \lambda)\fm(\xi', \lambda) t_1 e^{-t_{1}(x_N+y_N)}\yhwidehat{(-\Delta') \eta}(\xi', y_N)](x')\,dy_N\\
	&\enskip + \int^\infty_0 \CF^{-1}_{\xi'}[\vartheta_2 (\xi', \lambda)\fm(\xi', \lambda) t_2 \CM_0 (x_N+y_N) \widehat{(-\Delta') \eta} (\xi', y_N)](x')\,dy_N\\
	&\enskip + \int^\infty_0 \CF^{-1}_{\xi'}[\vartheta_2 (\xi', \lambda)\fm(\xi', \lambda) e^{-t_1 (x_N+y_N)}\widehat{(-\Delta') \eta}(\xi', y_N)](x')\,dy_N,\\
	\CA_{2}^2(\lambda)(\nabla' \pd_N \eta)
	&=\sum_{k=1}^{N-1} \int^\infty_0 \CF^{-1}_{\xi'}[\vartheta_1 (\xi', \lambda)\fm(\xi', \lambda) i\xi_k 
	e^{-t_1 (x_N+y_N)}\widehat{\pd_k \pd_N \eta}(\xi', y_N)](x')\,dy_N\\
	&\enskip + \sum_{k=1}^{N-1} \int^\infty_0 \CF^{-1}_{\xi'}[\vartheta_2 (\xi', \lambda)\fm(\xi', \lambda) i\xi_k 	
	\CM_0 (x_N+y_N) \widehat{\pd_k \pd_N \eta}(\xi', y_N)](x')\,dy_N.
\end{align*}
Lemma \ref{est:fl} and Remark \ref{remarkm} imply that for any $\epsilon \in (\epsilon_*, \pi/2)$ for the same constant $\epsilon_*$ as in \eqref{fl}
\[
	\vartheta_1(\xi', \lambda) \in \bM^1_{-2}(\Sigma_{\epsilon, 0}), \quad \vartheta_2(\xi', \lambda) \in \bM^1_{-1}(\Sigma_{\epsilon, 0}).
\]
Then by Lemma \ref{est:fl} and Corollary \ref{fm}, together with Leibniz's rule, 
there exists $\lambda_0 \ge 1$ such that 
\begin{align*}
	\vartheta_2(\xi', \lambda) \fm(\xi', \lambda) t_2, \enskip  
	\vartheta_2 (\xi', \lambda)\fm(\xi', \lambda) i\xi_k &\in \bM^1_{-1}(\Sigma_{\epsilon, \lambda_0}),\\
	\vartheta_1(\xi', \lambda) \fm(\xi', \lambda) t_1, \enskip
	\vartheta_1 (\xi', \lambda)\fm(\xi', \lambda) i\xi_k, \enskip 
	\vartheta_2 (\xi', \lambda)\fm(\xi', \lambda), &\in \bM^1_{-2}(\Sigma_{\epsilon, \lambda_0})
\end{align*}
for $k=1, \ldots, N-1$.
By using Corollary \ref{cor:t1} $\thetag2$, it implies that $\CA_2(\lambda)$ satisfies \eqref{est:rbdd}.

\subsection{The $\CR$-boundedness for $\CB_2(\lambda)$}
The solution formulas \eqref{sol} for \eqref{r8} implies that 
\[
	u_J = \CF^{-1}_{\xi'} [n_{J1}(\xi', \lambda) \fm(\xi', \lambda) 
	e^{-\omega_\lambda x_N}|\xi'|^2 \hat \eta(0)](x')
	+ \sum^2_{l=1} \CF^{-1}_{\xi'} [n_{J2}^l(\xi', \lambda) \fm(\xi', \lambda) 
	\CM_l(x_N) |\xi'|^2 \hat \eta(0)](x'),
\]
where 
\begin{align*}
	n_{j1}(\xi', \lambda)&= \frac{i\xi_j}{2\mu \omega_\lambda^2}
	+\sum^2_{l=1} \frac{(-1)^{l+1}i\xi_j t_1 t_2 (\omega_\lambda^2+|\xi'|^2)\fa(\xi', \lambda)\fp_l(\xi', \lambda)}{2\mu \omega_\lambda^2 s_l \fl_l(\xi', \lambda)},\\
	n_{j2}^l(\xi', \lambda)&=
	\frac{(-1)^{l}i \xi_j t_1t_2 (\omega_\lambda^2 + |\xi'|^2)s_1s_2(t_l+\omega_\lambda)}{\mu s_l \fl_l(\xi, \lambda)},\\ 
	n_{N1}(\xi', \lambda)&= \frac{1}{2\mu \omega_\lambda} 	
	+ \sum^2_{l=1} \frac{(-1)^{l+1}t_1 t_2 (\omega_\lambda^2 +|\xi'|^2)\fa(\xi', \lambda) \fq_l(\xi', \lambda)}{2\mu \omega_\lambda s_l\fl_l(\xi', \lambda)}, \\
	n_{N2}^l(\xi', \lambda)&= \frac{(-1)^{l+1}t_1t_2 (\omega_\lambda^2 +|\xi'|^2)s_1s_2t_l(t_l+\omega_\lambda)}{\mu s_l \fl_l(\xi', \lambda)} 
\end{align*}
for $J=1, \ldots, N$, $j=1, \ldots, N-1$.
Repeat the same method as in subsection \ref{a2}.
Set
\begin{align*}
	u_J=\CB_{2J}(\lambda) \eta
	=\CB_{2J}^1(\lambda)(-\Delta' \eta) + \CB_{2J}^2(\lambda)(\nabla' \pd_N \eta), 
\end{align*}
where
\begin{align*}
	\CB_{2J}^1(\lambda)(-\Delta' \eta)
	&=\int^\infty_0 \CF^{-1}_{\xi'}[n_{J1} \fm(\xi', \lambda) \omega_\lambda e^{-\omega_\lambda (x_N+y_N)}\widehat{(-\Delta') \eta}(\xi', y_N)](x')\,dy_N\\
	&\enskip +\sum_{l=1}^2 \int^\infty_0 \CF^{-1}_{\xi'}[n_{J2}^l \fm(\xi', \lambda) t_l \CM_l (x_N+y_N) \widehat{(-\Delta') \eta}(\xi', y_N)](x')\,dy_N\\
	&\enskip +\sum_{l=1}^2 \int^\infty_0 \CF^{-1}_{\xi'}[n_{J2}^l \fm(\xi', \lambda) \fr_l(\xi, \lambda) e^{-\omega_\lambda (x_N+y_N)}\widehat{(-\Delta') \eta}(\xi', y_N)](x')\,dy_N,\\
	\CB_{2J}^2(\lambda)(\nabla' \pd_N \eta)
	&=\sum_{k=1}^{N-1} \int^\infty_0 \CF^{-1}_{\xi'}[n_{J1} \fm(\xi', \lambda) i\xi_k 
	e^{-\omega_\lambda (x_N+y_N)}\widehat{\pd_k \pd_N \eta}(\xi', y_N)](x')\,dy_N\\
	&\enskip + \sum_{k=1}^{N-1} \sum_{l=1}^2 \int^\infty_0 \CF^{-1}_{\xi'}[n_{J2}^l \fm(\xi', \lambda) i\xi_k 	
	\CM_l (x_N+y_N) \widehat{\pd_k \pd_N \eta}(\xi', y_N)](x')\,dy_N
\end{align*}
for $J=1, \ldots, N$.
Lemma \ref{est:fl} and Remark \ref{remarkm} imply that for any $\epsilon \in (\epsilon_*, \pi/2)$ for the same constant $\epsilon_*$ as in \eqref{fl}
\[
	n_{J1}(\xi', \lambda) \in \bM^1_{-1}(\Sigma_{\epsilon, 0}), \quad n_{J2}^l(\xi', \lambda), \enskip \fr_l(\xi', \lambda) \in \bM^1_0(\Sigma_{\epsilon, 0})
\]
for $J=0, \ldots, N$, $l=1, 2$. 
Then by Lemma \ref{est:fl} and Corollary \ref{fm}, together with Leibniz's rule, 
there exists $\lambda_0 \ge 1$ such that 
\begin{align*}
	n_{J1}(\xi', \lambda) \fm(\xi', \lambda) \omega_\lambda, \enskip 
	n_{J2}^l(\xi', \lambda) \fm(\xi', \lambda) \fr_l(\xi, \lambda), \enskip
	n_{J1} \fm(\xi', \lambda) i\xi_k &\in \bM^1_{-1}(\Sigma_{\epsilon, \lambda_0})\\
	n_{J2}^l(\xi', \lambda) \fm(\xi', \lambda) t_l, \enskip 
	n_{J2}^l(\xi', \lambda) \fm(\xi', \lambda) i\xi_k &\in \bM^1_0(\Sigma_{\epsilon, \lambda_0})
\end{align*}
for 
$J=0, \ldots, N$, $k=0, \ldots, N-1$, and $l=1, 2$. 
Thus Corollary \ref{cor:t1} $\thetag1$ implies that $\CB_2(\lambda)$ satisfies \eqref{est:rbdd}.

\subsection{The $\CR$-boundedness for $\CC_2(\lambda)$}
In view of \eqref{h}, we set $h$ as follows:
\begin{equation}\label{eh}
	h(x) = (\CC_2(\lambda)\eta)(x) = \vp(x_N) \CF^{-1}_{\xi'}[\fm(\xi', \lambda)e^{-|\xi'|x_N} \hat \eta(\xi', 0)](x')
\end{equation}
for $x=(x', x_N) \in \R^N_+$, where $\vp(x_N)\in C^\infty_0(\R)$ equals to $1$ for $x_N \in (-1, 1)$ and $0$
for $x_N \notin [-2, 2]$.
Applying the Volevich trick \eqref{vt} to \eqref{eh}, we have
\[
	h(x) = (\CC_2(\lambda)\eta)(x) = (\CC_2^1(\lambda)\eta)(x)+(\CC_2^2(\lambda)\eta)(x),
\]
where
\begin{align*}
	(\CC_2^1(\lambda)\eta)(x)&=\vp(x_N) \int^\infty_0 \CF^{-1}_{\xi'}[\fm(\xi', \lambda)|\xi'| e^{-|\xi'|(x_N+y_N)} \vp(y_N) \hat \eta(\xi', y_N)](x')\,dy_N,\\
	(\CC_2^2(\lambda)\eta)(x)&=-\vp(x_N) \int^\infty_0 \CF^{-1}_{\xi'}[\fm(\xi', \lambda) e^{-|\xi'|(x_N+y_N)} \pd_N(\vp(y_N) \hat \eta(\xi', y_N))](x')\,dy_N.
\end{align*}
First, we consider the $\CR$-boundedness for $\lambda^j\CC_2(\lambda)$ for $j=0, 1$.
Corollary \ref{fm} implies that for any $\epsilon \in (\epsilon_*, \pi/2)$ for the same constant $\epsilon_*$ as in \eqref{fl},
there exists $\lambda_0 \ge 1$ such that
$\lambda^j \fm(\xi', \lambda) \in \bM^1_0(\Sigma_{\epsilon, \lambda_0}) \subset \bM^2_0(\Sigma_{\epsilon, \lambda_0})$ for $j=0, 1$, then Lemma \ref{lem:t3} gives us
there exists a positive constant $r$ such that
\begin{equation} \label{rbddh1}
\begin{aligned}
&\CR_{\CL(L_q(\R^N_+))}
(\{(\tau \pd_\tau)^n \lambda^j \CC_2^1 (\lambda) \mid 
\lambda \in \Sigma_{\epsilon, \lambda_0}\}) 
\leq r,\\
&\CR_{\CL(H^1_q(\R^N_+), L_q(\R^N_+))}
(\{(\tau \pd_\tau)^n \lambda^j \CC_2^2 (\lambda) \mid 
\lambda \in \Sigma_{\epsilon, \lambda_0}\}) 
\leq r
\end{aligned}
\end{equation}
for $n = 0, 1$, $j=0, 1$.

Next, we consider the $\CR$-boundedness for $\lambda^j \pd_{x'}^{\alpha'} \pd_N^k \CC_2(\lambda)$ for $j=0, 1$ and $1\le |\alpha'|+k \le 3-j$.
In this case, we use
\[
	1=\frac{1+|\xi'|^2}{1+|\xi'|^2}
	=\frac{1}{1+|\xi'|^2}-\sum^{N-1}_{l=1} \frac{i\xi_l}{1+|\xi'|^2}i\xi_l.
\]
For $j=0, 1$, $|\alpha'|=1$, and $k=0$, we have
\begin{align*}
	(\lambda^j \pd_{x'}^{\alpha'} \CC_2^1(\lambda)\eta)(x)
	&=
	\vp(x_N) \int^\infty_0 \CF^{-1}_{\xi'}\Big{[}\fm(\xi', \lambda)\frac{\lambda^j (i\xi')^{\alpha'} |\xi'|}{1+|\xi'|^2}\\
	&\hspace{80pt} \times 
e^{-|\xi'|(x_N+y_N)} \vp(y_N) \widehat{(1-\Delta') \eta} (\xi', y_N)\Big{]}(x')\,dy_N,\\
	(\lambda^j \pd_{x'}^{\alpha'} \CC_2^2(\lambda)\eta)(x)
	&=-
	\vp(x_N) \int^\infty_0 \CF^{-1}_{\xi'}\Big{[}\fm(\xi', \lambda)\frac{\lambda^j (i\xi')^{\alpha'}}
{(1+|\xi'|^2)|\xi'|} \\
		&\hspace{80pt} \times |\xi'| e^{-|\xi'|(x_N+y_N)} \pd_N(\vp(y_N) \hat{\eta} (\xi', y_N))\Big{]}(x')\,dy_N\\
	&\enskip+
	\sum^{N-1}_{l=1} \vp(x_N) \int^\infty_0 \CF^{-1}_{\xi'}\Big{[}\fm(\xi', \lambda)\frac{\lambda^j (i\xi')^{\alpha'}(i\xi_l)}{(1+|\xi'|^2)}\\
		&\hspace{80pt} \times 
	e^{-|\xi'|(x_N+y_N)} \pd_N(\vp(y_N) \widehat{\pd_l \eta} (\xi', y_N))\Big{]}(x')\,dy_N.
\end{align*}
Furthermore, for $j=0, 1$, $|\alpha'|=0$, and $k=1$, we have
\begin{align*}
	(\lambda^j \pd_N \CC_2^1(\lambda)\eta)(x)
	&=
	(\pd_N \vp)(x_N) \int^\infty_0 \CF^{-1}_{\xi'}\Big{[}\fm(\xi', \lambda)\frac{\lambda^j}{1+|\xi'|^2} \\
		&\hspace{80pt} \times 
	|\xi'| e^{-|\xi'|(x_N+y_N)} \vp(y_N) \widehat{(1-\Delta') \eta} (\xi', y_N)\Big{]}(x')\,dy_N\\
	&\enskip-
	\vp(x_N) \int^\infty_0 \CF^{-1}_{\xi'}\Big{[}\fm(\xi', \lambda)\frac{\lambda^j |\xi'|^2}{1+|\xi'|^2}		
	e^{-|\xi'|(x_N+y_N)} \vp(y_N) \widehat{(1-\Delta') \eta} (\xi', y_N)\Big{]}(x')\,dy_N,\\
	(\lambda^j \pd_N \CC_2^2(\lambda)\eta)(x)
	&=-
	(\pd_N \vp)(x_N) \int^\infty_0 \CF^{-1}_{\xi'}\Big{[}\fm(\xi', \lambda)\frac{\lambda^j}{1+|\xi'|^2}%\\
		%&\hspace{80pt} \times 
	e^{-|\xi'|(x_N+y_N)} \pd_N(\vp(y_N) \hat{\eta} (\xi', y_N))\Big{]}(x')\,dy_N\\
	&\enskip +\sum^{N-1}_{l=1} 
	(\pd_N \vp)(x_N) \int^\infty_0 \CF^{-1}_{\xi'}\Big{[}\fm(\xi', \lambda)\frac{\lambda^j(i\xi_l)}{(1+|\xi'|^2)|\xi'|}\\
		&\hspace{80pt} \times |\xi'| e^{-|\xi'|(x_N+y_N)} \pd_N(\vp(y_N) \widehat{\pd_l \eta} (\xi', y_N))\Big{]}(x')\,dy_N\\
	&\enskip+
	\vp(x_N) \int^\infty_0 \CF^{-1}_{\xi'}\Big{[}\fm(\xi', \lambda)\frac{\lambda^j}{1+|\xi'|^2}%\\
%		&\hspace{80pt} \times 
	|\xi'| e^{-|\xi'|(x_N+y_N)} \pd_N(\vp(y_N) \hat{\eta} (\xi', y_N))](x')\,dy_N\\
	&\enskip-
	\sum^{N-1}_{l=1} \vp(x_N) \int^\infty_0 \CF^{-1}_{\xi'}\Big{[}\fm(\xi', \lambda)\frac{\lambda^j |\xi'|(i\xi_l)}{1+|\xi'|^2}
	e^{-|\xi'|(x_N+y_N)} \pd_N(\vp(y_N) \widehat{\pd_l \eta} (\xi', y_N))\Big{]}(x')\,dy_N.
\end{align*}
Then by Corollary \ref{fm} and Leibiz's rule, 
there exists $\lambda_0 \ge 1$ such that 
\begin{align*}
	&
	\fm(\xi', \lambda)\frac{\lambda^j (i\xi')^{\alpha'}|\xi'|}{1+|\xi'|^2}, \enskip 
	\fm(\xi', \lambda)\frac{\lambda^j (i\xi')^{\alpha'}}{(1+|\xi'|^2)|\xi'|},\enskip 
	\fm(\xi', \lambda)\frac{\lambda^j (i\xi')^{\alpha'} (i\xi_l)}{1+|\xi'|^2},\enskip
	\fm(\xi', \lambda)\frac{\lambda^j}{1+|\xi'|^2}, \\
	&\fm(\xi', \lambda)\frac{\lambda^j |\xi'|^2}{1+|\xi'|^2}, \enskip
	\fm(\xi', \lambda)\frac{\lambda^j (i\xi_l)}{(1+|\xi'|^2)|\xi'|}, \enskip %\fm(\xi', \lambda)\frac{\lambda^j |\xi'|}{(1+|\xi'|^2)|\xi'|},\enskip
	\fm(\xi', \lambda)\frac{\lambda^j |\xi'|(i\xi_l)}{1+|\xi'|^2}
\end{align*}
are class of $\bM^2_0(\Sigma_{\epsilon, \lambda_0})$ for $j=0, 1$ and $l=1, \ldots N-1$.
On the other hand, for $j=0, 1$ and $2\le |\alpha'|+k \le 3-j$, we have
\begin{align*}
	(\lambda^j \pd_{x'}^{\alpha'} \pd_N^k \CC_2^1(\lambda)\eta)(x)
	&=\sum_{k_1+k_2=k}
	\dbinom{k}{k_1}
	(\pd_N^{k_1} \vp)(x_N) \int^\infty_0 \CF^{-1}_{\xi'}\Big{[}\fm(\xi', \lambda)\frac{\lambda^j (i\xi')^{\alpha'} (-|\xi'|)^{k_2}}{1+|\xi'|^2}\\
	&\hspace{80pt} \times |\xi'| e^{-|\xi'|(x_N+y_N)} \vp(y_N) \widehat{(1-\Delta') \eta} (\xi', y_N)\Big{]}(x')\,dy_N,\\
	(\lambda^j \pd_{x'}^{\alpha'} \pd_N^k \CC_2^2(\lambda)\eta)(x)
	&=-\sum_{k_1+k_2=k}
	\dbinom{k}{k_1}
	(\pd_N^{k_1} \vp)(x_N) \int^\infty_0 \CF^{-1}_{\xi'}\Big{[}\fm(\xi', \lambda)\frac{\lambda^j (i\xi')^{\alpha'} (-|\xi'|)^{k_2}}{1+|\xi'|^2}\\
	&\hspace{80pt} \times e^{-|\xi'|(x_N+y_N)} \pd_N(\vp(y_N) \hat{\eta} (\xi', y_N))\Big{]}(x')\,dy_N\\
	&\enskip +\sum_{l=1}^{N-1} \sum_{k_1+k_2=k}
	\dbinom{k}{k_1}
	(\pd_N^{k_1} \vp)(x_N) \int^\infty_0 \CF^{-1}_{\xi'}\Big{[}\fm(\xi', \lambda)\frac{\lambda^j (i\xi')^{\alpha'} (-|\xi'|)^{k_2}(i\xi_l)}{(1+|\xi'|^2)|\xi'|}\\
	&\hspace{80pt} \times |\xi'| e^{-|\xi'|(x_N+y_N)} \pd_N(\vp(y_N) \widehat{\pd_l \eta} (\xi', y_N))\Big{]}(x')\,dy_N,
\end{align*}
then 
Corollary \ref{fm} and Leibiz's rule implies that
there exists $\lambda_0 \ge 1$ satisfying 
\[
	\fm(\xi', \lambda)\frac{\lambda^j (i\xi')^{\alpha'} (-|\xi'|)^{k_2}}{1+|\xi'|^2},
	\enskip \fm(\xi', \lambda)\frac{\lambda^j (i\xi')^{\alpha'} (-|\xi'|)^{k_2}(i\xi_l)}{(1+|\xi'|^2)|\xi'|} \in \bM^2_0(\Sigma_{\epsilon, \lambda_0}).
\]
Therefore Lemma \ref{lem:t3} gives us
there exists a positive constant $r$ such that
\begin{equation} \label{rbddh2}
\begin{aligned}
&\CR_{\CL(H^2_q(\R^N_+), L_q(\R^N_+))}
(\{(\tau \pd_\tau)^n \lambda^j \pd_{x'}^{\alpha'} \pd_N^k \CC_2^a (\lambda) \mid 
\lambda \in \Sigma_{\epsilon, \lambda_0}\}) 
\leq r
\end{aligned}
\end{equation}
for $n = 0, 1$, $j=0, 1$, $1\le |\alpha'|+k \le 3-j$ and $a=1, 2$.
Combining \eqref{rbddh1} and \eqref{rbddh2}, the operator $\CC_2(\lambda)$ satisfies \eqref{est:rbdd}.
This completes the proof of Proposition \ref{with}.
%%%%%%%%%%%%%%%%%%%%%%%%%%%%%%%%%%%%%%%%%%%%%%%%%%%%%%
\section{The result for the case $\gamma_* \in \R$}\label{general}
Assume that $\gamma_* \in \R$ and $\mu$, $\nu$, $\kappa$, and $\sigma$ satisfy the same condition \eqref{condi} as in Theorem \ref{main}.
We recall that the following resolvent problem.
\begin{equation}\label{r9}
\left\{
\begin{aligned}
	&\lambda \rho + \rho_* \dv \bu = d & \quad&\text{in $\R^N_+$}, \\
	&\rho_* \lambda \bu - \DV\{\bS(\bu) - (\gamma_* - \rho_* \kappa \Delta) \rho \bI\}=\bff& \quad&\text{in $\R^N_+$},\\
	&\{\bS(\bu) - (\gamma_* - \rho_* \kappa \Delta) \rho \bI\} \bn - \sigma \Delta' h \bn =\bg & \quad&\text{on $\R^N_0$},\\
	&\bn \cdot \nabla \rho = k & \quad&\text{on $\R^N_0$},\\
	&\lambda h - \bu \cdot \bn = \zeta & \quad&\text{on $\R^N_0$}.
\end{aligned}
\right.
\end{equation}
Note that \eqref{r9} is equivalent to \eqref{main prob} if $\gamma_* = 0$.
Using Theorem \ref{main}, we also obtain the existence of $\CR$-bounded operator families for \eqref{r9} as follows. 
\begin{theorem}\label{thm:general}
Let $1<q<\infty$.
Assume that $\mu, \nu, \kappa, \sigma >0$, and $\gamma_* \in \R$
are constants satisfying \eqref{condi}.
%Then there is a constant $\epsilon_* \in (\tilde \epsilon_*, \pi/2)$ such that for any $\epsilon \in (\epsilon_*, \pi/2)$
Let $\epsilon \in (\epsilon_*, \pi/2)$ for $\epsilon_*$ given in Theorem \ref{main}. 
Then there exists a constant $\lambda_0 \ge 1$ such that the following assertions hold true: 

$\thetag1$ 
For any $\lambda \in \Sigma_{\epsilon, \lambda_0}$ there exist operator families 
\begin{align*}
&\CA_{\gamma_{*}} (\lambda) \in 
{\rm Hol} (\Sigma_{\epsilon, \lambda_0}, 
\CL(\CX_q(\R^N_+), H^3_q(\R^N_+)))\\
&\CB_{\gamma_{*}} (\lambda) \in 
{\rm Hol} (\Sigma_{\epsilon, \lambda_0}, 
\CL(\CX_q(\R^N_+), H^2_q(\R^N_+)^N)),\\
&\CC_{\gamma_{*}} (\lambda) \in 
{\rm Hol} (\Sigma_{\epsilon, \lambda_0}, 
\CL(\CX_q(\R^N_+), W^{3-1/q}_q(\R^N_0)))
\end{align*}
such that 
for any $\bF=(d, \bff, \bg, k, \zeta) \in X_q(\R^N_+)$, 
\begin{equation*}
\rho = \CA_{\gamma_{*}} (\lambda) \CF_\lambda \bF, \quad
\bu = \CB_{\gamma_{*}} (\lambda) \CF_\lambda \bF, \quad
h = \CC_{\gamma_{*}} (\lambda) \CF_\lambda \bF
\end{equation*}
are unique solutions of problem \eqref{r9}.

$\thetag2$ 
There exists a positive constant $r$ such that
\begin{equation} \label{rbdd}
\begin{aligned}
&\CR_{\CL(\CX_q(\R^N_+), \fA_q(\R^N_+))}
(\{(\tau \pd_\tau)^n \CR_\lambda \CA_{\gamma_{*}} (\lambda) \mid 
\lambda \in \Sigma_{\epsilon, \lambda_0}\}) 
\leq r,\\
&\CR_{\CL(\CX_q(\R^N_+), \fB_q(\R^N_+))}
(\{(\tau \pd_\tau)^n \CS_\lambda \CB_{\gamma_{*}} (\lambda) \mid 
\lambda \in \Sigma_{\epsilon, \lambda_0}\}) 
\leq r,\\
&\CR_{\CL(\CX_q(\R^N_+), \fC_q(\R^N_+))}
(\{(\tau \pd_\tau)^n \CT_\lambda \CC_{\gamma_{*}} (\lambda) \mid 
\lambda \in \Sigma_{\epsilon, \lambda_0}\}) 
\leq r
\end{aligned}
\end{equation}
for $n = 0, 1$.
Here, above constants $\lambda_0$ and $r$ depend solely on $N$, $q$, $\epsilon$,
$\mu$, $\nu$, $\kappa$, $\gamma_*$ and $\sigma$. 
\end{theorem}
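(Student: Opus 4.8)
The approach is to regard \eqref{r9} as a lower-order perturbation of the $\gamma_*=0$ problem \eqref{main prob} and to close the argument with a Neumann series built on Theorem~\ref{main}. First, exactly as in the reduction of \eqref{r1} to \eqref{r2}, the rescaling replacing $(\rho,\mu,\nu,\kappa,\sigma)$ by $(\rho_*\rho,\rho_*\mu,\rho_*\nu,\kappa/\rho_*,\rho_*\sigma)$ turns \eqref{r9} into \eqref{r2} with a general $\gamma_*\in\R$ (the coefficients still satisfying \eqref{condi}), so it suffices to construct the solution operators for \eqref{r2} and undo that rescaling at the end. Bringing the first-order contributions of $\gamma_*$ to the right-hand side, one checks directly that $(\rho,\bu,h)$ solves \eqref{r2} if and only if it solves \eqref{main prob} with $\bF=(d,\bff,\bg,k,\zeta)$ replaced by $\tilde\bF(\rho)=(d,\bff-\gamma_*\nabla\rho,\bg+\gamma_*\rho\bn,k,\zeta)=\bF+\CQ(\rho)$, where $\CQ(\rho):=(0,-\gamma_*\nabla\rho,\gamma_*\rho\bn,0,0)\in X_q(\R^N_+)$. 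Since $\bn$ is constant, on the level of the data vector this reads $\CF_\lambda\tilde\bF(\rho)=\CF_\lambda\bF+\CF_\lambda\CQ(\rho)$, and $\CV_\lambda(\rho):=\CF_\lambda\CQ(\rho)\in\CX_q(\R^N_+)$ is the element whose $\bff$-, $\nabla\bg$- and $\lambda^{1/2}\bg$-slots are $-\gamma_*\nabla\rho$, $\gamma_*(\nabla\rho)\otimes\bn$ and $\gamma_*\lambda^{1/2}\rho\,\bn$, all other slots vanishing.

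Using the solution operators of Theorem~\ref{main}, we look for $(\rho,\bu,h)=\bigl(\CA_0(\lambda),\CB_0(\lambda),\CC_0(\lambda)\bigr)G$ with $G:=\CF_\lambda\tilde\bF(\rho)$; writing $T_\lambda:=\CV_\lambda\circ\CA_0(\lambda)\in\CL(\CX_q(\R^N_+))$, this amounts to the fixed-point equation $(I-T_\lambda)G=\CF_\lambda\bF$. The key point is that $\CV_\lambda$ only involves $\nabla\rho$ and $\lambda^{1/2}\rho$, which sit one order below the quantities controlled by Theorem~\ref{main}$\thetag2$: for $\rho=\CA_0(\lambda)G$ the third component of $\CR_\lambda\CA_0(\lambda)G$ is $\lambda\rho\in H^1_q(\R^N_+)$, so that $\nabla\rho=\lambda^{-1}\nabla(\lambda\rho)$ and $\lambda^{1/2}\rho=\lambda^{-1/2}(\lambda\rho)$, while the scalars $\lambda^{-1},\lambda^{-1/2}$ together with their $\tau\pd_\tau$-derivatives are bounded by $C\lambda_0^{-1/2}$ on $\Sigma_{\epsilon,\lambda_0}$ (recall $\lambda_0\ge1$ and $|\tau|\le|\lambda|$). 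Combining this with Theorem~\ref{main}$\thetag2$ and Lemma~\ref{lem:5.3}, one obtains that $\{(\tau\pd_\tau)^nT_\lambda\mid\lambda\in\Sigma_{\epsilon,\lambda_0}\}$ is $\CR$-bounded on $\CL(\CX_q(\R^N_+))$ with bound at most $C_0|\gamma_*|r\lambda_0^{-1/2}$ for $n=0,1$, where $C_0$ depends only on $N,q,\epsilon,\mu,\nu,\kappa,\sigma$.

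Now enlarge $\lambda_0$ so that $\delta:=C_0|\gamma_*|r\lambda_0^{-1/2}\le\tfrac12$. Then $I-T_\lambda$ is invertible on $\CX_q(\R^N_+)$, $(I-T_\lambda)^{-1}=\sum_{j\ge0}T_\lambda^j$, and differentiating the series termwise and using the Leibniz rule for $\tau\pd_\tau$ shows that $\{(\tau\pd_\tau)^n(I-T_\lambda)^{-1}\mid\lambda\in\Sigma_{\epsilon,\lambda_0}\}$ is again $\CR$-bounded (for $n=1$ the associated series of $\CR$-bounds is dominated by $\sum_j j\delta^j<\infty$). Set $G:=(I-T_\lambda)^{-1}\CF_\lambda\bF$; since $T_\lambda\CF_\lambda=\CF_\lambda\bT_\lambda$ for the operator $\bT_\lambda:=\CQ\circ\CA_0(\lambda)\circ\CF_\lambda$ on $X_q(\R^N_+)$, and since $\CF_\lambda$ is injective and bounded below for each $\lambda\in\Sigma_{\epsilon,\lambda_0}$ (hence has closed range), $G$ lies in the range of $\CF_\lambda$, say $G=\CF_\lambda\bF_*$ with $\bF_*\in X_q(\R^N_+)$. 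By Theorem~\ref{main}$\thetag1$, $(\rho,\bu,h):=(\CA_0(\lambda)G,\CB_0(\lambda)G,\CC_0(\lambda)G)$ solves \eqref{main prob} with data $\bF_*$, and from $G=\CF_\lambda\bF+\CV_\lambda(\rho)=\CF_\lambda(\bF+\CQ(\rho))$ and the injectivity of $\CF_\lambda$ we get $\bF_*=\tilde\bF(\rho)$, so $(\rho,\bu,h)$ solves \eqref{r2}. Undoing the rescaling of the first step and setting $\CA_{\gamma_*}(\lambda)=\CA_0(\lambda)(I-T_\lambda)^{-1}$, $\CB_{\gamma_*}(\lambda)=\CB_0(\lambda)(I-T_\lambda)^{-1}$, $\CC_{\gamma_*}(\lambda)=\CC_0(\lambda)(I-T_\lambda)^{-1}$, the holomorphy and the bounds \eqref{rbdd} follow from Lemma~\ref{lem:5.3}$\thetag1,\thetag2$ applied to $\CR_\lambda\CA_0(\lambda)(I-T_\lambda)^{-1}$, $\CS_\lambda\CB_0(\lambda)(I-T_\lambda)^{-1}$ and $\CT_\lambda\CC_0(\lambda)(I-T_\lambda)^{-1}$, and the mapping properties in $\thetag1$ then follow from Definition~\ref{dfn2}. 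For uniqueness, a solution of the homogeneous \eqref{r9} solves (after rescaling) \eqref{main prob} with data $\tilde\bF(\rho)=\CQ(\rho)$, so $\CF_\lambda\tilde\bF(\rho)=\CV_\lambda(\rho)$; by the uniqueness in Theorem~\ref{main}, $\rho=\CA_0(\lambda)\CF_\lambda\tilde\bF(\rho)$, whence $\CF_\lambda\tilde\bF(\rho)=T_\lambda\CF_\lambda\tilde\bF(\rho)$, i.e. $(I-T_\lambda)\CF_\lambda\tilde\bF(\rho)=0$, forcing $\CF_\lambda\tilde\bF(\rho)=0$ and hence $\rho=\bu=h=0$.

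The main obstacle is the $\CR$-bound on $T_\lambda$ in the second paragraph: one must verify that the lower-order terms $\gamma_*\nabla\rho$ in the momentum equation and $\gamma_*\rho\bn$ on the boundary, once expressed through $\CA_0(\lambda)$, genuinely gain a negative power of $\lambda_0$ \emph{in the $\CR$-bound} rather than merely in the operator norm, since this is what makes the Neumann series converge uniformly in $\lambda$. This exploits the precise structure of $\fA_q(\R^N_+)$ (in which $\lambda\rho$ lives in $H^1_q(\R^N_+)$, not only in $L_q(\R^N_+)$) together with the decay of $\lambda^{-1}$, $\lambda^{-1/2}$ and their $\tau\pd_\tau$-derivatives on $\Sigma_{\epsilon,\lambda_0}$; everything else is routine $\CR$-boundedness bookkeeping.
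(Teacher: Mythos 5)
Your proof is correct and follows the paper's strategy: treat the $\gamma_*$ terms as a lower-order perturbation of the $\gamma_*=0$ system, build the perturbation operator $\CQ\circ\CA_0(\lambda)\circ\CF_\lambda$ (the paper's $\CD(\lambda)$), and close by a Neumann series whose convergence rests on the $\CR$-bound gaining a factor $\lambda_0^{-1/2}$ because only $\nabla\rho$ and $\lambda^{1/2}\rho$ enter the perturbation. You merely set the series up on $\CX_q(\R^N_+)$ rather than on $X_q(\R^N_+)$ — equivalent via the intertwining $T_\lambda\CF_\lambda=\CF_\lambda\bT_\lambda$ — and spell out the smallness mechanism and the uniqueness argument that the paper defers to Saito \cite[Sec.~6]{S}.
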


\begin{proof}
For $\bF=(d, \bff, \bg, k, \eta) \in X_q (\R^N_+)$,
Theorem \ref{main} implies that %there exists operator families 
%$\CA_0(\lambda)$, $\CB_0(\lambda)$, and $\CC_0(\lambda)$ such that 
$\rho = \CA_0(\lambda) \CF_\lambda \bF$, 
$\bu = \CB_0(\lambda) \CF_\lambda \bF$, and $h = \CC_0(\lambda) \CF_\lambda \bF$
satisfy
\begin{equation*}\label{r3}
\left\{
\begin{aligned}
	&\lambda \rho + \dv \bu = d & \quad&\text{in $\R^N_+$}, \\
	&\lambda \bu - \DV\{\bS(\bu) - (\gamma_* - \kappa \Delta)  \rho\bI\}=\bff + \gamma_* \nabla \CA_0(\lambda) \CF_\lambda \bF& \quad&\text{in $\R^N_+$},\\
	&\{\bS(\bu) - (\gamma_* - \rho_* \kappa \Delta) \rho \bI\} \bn - \sigma \Delta' h \bn=\bg - \gamma_* (\CA_0(\lambda) \CF_\lambda \bF) \bn& \quad&\text{on $\R^N_0$},\\
	&\bn \cdot \nabla \rho = k & \quad&\text{on $\R^N_0$},\\
	&\lambda h - \bu \cdot \bn = \zeta & \quad&\text{on $\R^N_0$}.
\end{aligned}
\right.\end{equation*}
Denote 
\begin{align*}
	\CD(\lambda) \bF &=(0, -\gamma_* \nabla \CA_0(\lambda) \CF_\lambda \bF, \gamma_* (\CA_0(\lambda) \CF_\lambda \bF) \bn, 0, 0),\\
	%Y_{q, \lambda} (\R^N_+)&=\{\CF_\lambda \bF \in \CX_q(\R^N_+) \mid \bF \in X_q(\R^N_+), \enskip \lambda \in \Sigma_{\epsilon, \lambda_0}\}.
\end{align*}
%Note that $\CF_\lambda$ is a bijection from $X_q(\R^N_+)$ onto $Y_{q, \lambda}(\R^N_+)$.
Employing the same method as in \cite[Sec.6]{S}, we see that the existence of the inverse operator 
$(I-\CD(\lambda))^{-1} \in \CL(X_q(\R^N_+))$, and then we can construct the solution to \eqref{r9}:
\[
	\rho = \CA_0(\lambda) \CF_\lambda (I-\CD(\lambda))^{-1}\bF, \quad
	\bu = \CB_0(\lambda) \CF_\lambda (I-\CD(\lambda))^{-1}\bF, \quad
	h = \CC_0(\lambda) \CF_\lambda (I-\CD(\lambda))^{-1}\bF.
\]
Furthermore, setting $\CA_{\gamma_{*}}(\lambda) \CF_\lambda \bF=\CA_0(\lambda) \CF_\lambda (I-\CD(\lambda))^{-1}\bF$,
$\CB_{\gamma_{*}}(\lambda) \CF_\lambda \bF=\CB_0(\lambda) \CF_\lambda (I-\CD(\lambda))^{-1}\bF$,
$\CC_{\gamma_{*}}(\lambda) \CF_\lambda \bF=\CC_0(\lambda) \CF_\lambda (I-\CD(\lambda))^{-1}\bF$ for $\bF \in X_q(\R^N_+)$,
the operators $\CA_{\gamma_{*}}(\lambda)$, $\CB_{\gamma_{*}}(\lambda)$, and $\CC_{\gamma_{*}}(\lambda)$ satisfy \eqref{rbdd} 
by \eqref{rbdd1} and Lemma \ref{lem:5.3}.
The uniqueness of solutions to \eqref{r9} also follow from \eqref{rbdd1} with $n=0$ by using the same manner as in \cite[Sec.6]{S}.
This completes the proof of Theorem \ref{thm:general}.

\end{proof}
%%%%%%%%%%%%%%%%%%%%%%%%%%%%%%%%%%%%%%%%%%%%%%%%%%%

\section{Maximal $L_p$-$L_q$ regularity}\label{mr}
In this section, we consider the following linearized problems:
\begin{equation}\label{linear1}
\left\{
\begin{aligned}
	&\pd_t \rho + \rho_* \dv \bu = 0 & \quad&\text{in $\R^N_+ \times (0, \infty)$}, \\
	&\rho_* \pd_t  \bu - \DV\{\bS(\bu) - (\gamma_* - \rho_* \kappa \Delta) \rho \bI\}=0& \quad&\text{in $\R^N_+ \times (0, \infty)$},\\
	&\{\bS(\bu) - (\gamma_* - \rho_* \kappa \Delta) \rho \bI\} \bn - \sigma \Delta' h \bn =0 & \quad&\text{on $\R^N_0 \times (0, \infty)$},\\
	&\bn \cdot \nabla \rho = 0 & \quad&\text{on $\R^N_0 \times (0, \infty)$},\\
	&\pd_t  h - \bu \cdot \bn = 0 & \quad&\text{on $\R^N_0 \times (0, \infty)$},\\
	&(\rho, \bu, h)|_{t=0} = (\rho_0, \bu_0, h_0)& \quad&\text{in $\R^N_+$},
\end{aligned}
\right.
\end{equation}
\begin{equation}\label{linear2}
\left\{
\begin{aligned}
	&\pd_t \rho + \rho_* \dv \bu = d & \quad&\text{in $\R^N_+ \times (0, \infty)$}, \\
	&\rho_* \pd_t  \bu - \DV\{\bS(\bu) - (\gamma_* - \rho_* \kappa \Delta) \rho \bI\}=\bff& \quad&\text{in $\R^N_+ \times (0, \infty)$},\\
	&\{\bS(\bu) - (\gamma_* - \rho_* \kappa \Delta) \rho \bI\} \bn - \sigma \Delta' h \bn =\bg & \quad&\text{on $\R^N_0 \times (0, \infty)$},\\
	&\bn \cdot \nabla \rho = k & \quad&\text{on $\R^N_0 \times (0, \infty)$},\\
	&\pd_t  h - \bu \cdot \bn = \zeta & \quad&\text{on $\R^N_0 \times (0, \infty)$},\\
	&(\rho, \bu, h)|_{t=0} = (0, 0, 0)& \quad&\text{in $\R^N_+$}.
\end{aligned}
\right.
\end{equation}
First, we consider \eqref{linear1} in the semigroup setting.
Let
\[
	\X_q(\R^N_+) = H^1_q(\R^N_+) \times L_q(\R^N_+)^N \times W^{2-1/q}_q(\R^N_0)
\]
with the norm
\[
	\|(\rho, \bu, h)\|_{\X_q(\R^N_+)}=\|\rho\|_{H^1_q(\R^N_+)} + \|\bu\|_{L_q(\R^N_+)} + \|h\|_{W^{2-1/q}_q(\R^N_0)}
\]
and let $\CA$, $\CD(\CA)$, and $\|\cdot\|_{\CD(\CA)}$ be an operator, its domain, and the norm with
\begin{align*}
	\CD_q(\CA)&=\{(\rho, \bu, h) \in (H^3_q(\R^N_+) \times H^2_q(\R^N_+)^N \times W^{3-1/q}_q(\R^N_0) \\
	&\qquad \mid \{\bS(\bu) - (\gamma_* - \rho_* \kappa \Delta) \rho \bI\} \bn - \sigma \Delta' h \bn =0, \enskip
	\bn \cdot \nabla \rho = 0 \enskip \text{on $\R^N_0$} \},\\
	\|(\rho, \bu, h)\|_{\CD_q(\CA)}&=\|\rho\|_{H^3_q(\R^N_+)} + \|\bu\|_{H^2_q(\R^N_+)} + \|h\|_{W^{3-1/q}_q(\R^N_0)},\\
	\CA(\rho, \bu, h)&=(\rho_* \dv \bu, \enskip \rho_*^{-1} \DV\{\bS(\bu) - (\gamma_* - \rho_* \kappa \Delta) \rho \bI\}, \enskip
	\bu \cdot \bn) \enskip \text{for $(\rho, \bu, h) \in \CD_q(\CA)$}. 
\end{align*}
Then setting $\bU = (\rho, \bu, h)$ and $\bU_0=(\rho_0, \bu_0, h_0)$, \eqref{linear1} can be written by 
\[
	\pd_t \bU-\CA \bU=0 \enskip \text{in $\R^N_+ \times \R^N_0$, for $t \in (0, \infty)$},
	\quad
	\bU|_{t=0}=\bU_0.
\]
Theorem \ref{thm:general} implies that the resolvent set $\rho(\CA)$ contains $\Sigma_{\epsilon, \lambda_0}$.
Furthermore, since Definition \ref{dfn2} with $n = 1$ 
implies the uniform boundedness
of the operator family $\CT$, 
the solution $\bU$ of the following equation
\[
	\lambda \bU-\CA \bU=\bF \enskip \text{in $\R^N_+ \times \R^N_0$} 
\]
satisfies 
the resolvent estimate:
\begin{equation}\label{resolvent es}
	|\lambda| \|\bU\|_{\X_q(\R^N_+)} + \|\bU\|_{\CD_q(\R^N_+)} \leq C_r
	\|\bF\|_{\X_q(\R^N_+)}
\end{equation}
for any $\lambda \in \Sigma_{\epsilon, \lambda_0}$ and $\bF \in \X_q(\R^N_+)$.
Thus
the generation of an analytic semigroup follows from standard analytic semigroup arguments.

\begin{theorem}\label{thm:semi1}
Let $1 < q < \infty$.
Assume that $\mu, \nu, \kappa, \sigma >0$, and $\gamma_* \in \R$
are constants satisfying \eqref{condi}.
Then, the operator $\CA$ generates an analytic
semigroup $\{T(t)\}_{t\geq 0}$ on $\X_q(\R^N_+)$.  
Moreover, there exist
constants $\gamma_0 \geq 1$ and $C_{q, N, \gamma_0} > 0$
such that $\{T(t)\}_{t\geq 0}$ satisfies the estimates: 
\begin{align*}
	\|T(t) (\rho_0, \bu_0, h_0) \|_{\X_q(\R^N_+)}
	&\leq C_{q, N, \gamma_0} e^{\gamma_0 t} \|(\rho_0, \bu_0, h_0)\|_{\X_q(\R^N_+)},\\
	\|\pd_t T(t) (\rho_0, \bu_0, h_0) \|_{\X_q(\R^N_+)}
	&\leq C_{q, N, \gamma_0} e^{\gamma_0 t} t^{-1} \|(\rho_0, \bu_0, h_0)\|_{\X_q(\R^N_+)},\\
	\|\pd_t T(t) (\rho_0, \bu_0, h_0) \|_{\X_q(\R^N_+)}
	&\leq C_{q, N, \gamma_0} e^{\gamma_0 t} \|(\rho_0, \bu_0, h_0)\|_{\CD_q(\CA)}
\end{align*}
for any $t > 0$.
\end{theorem}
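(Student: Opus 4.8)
The plan is to deduce Theorem~\ref{thm:semi1} entirely from Theorem~\ref{thm:general} and the classical characterization of generators of analytic semigroups by sectorial resolvent estimates. First I would record that $\Sigma_{\epsilon, \lambda_0} \subset \rho(\CA)$: given $\lambda \in \Sigma_{\epsilon, \lambda_0}$ and $\bF \in \X_q(\R^N_+)$, viewed as a right member of \eqref{r9} with $\bg = 0$ and $k = 0$, Theorem~\ref{thm:general}$\thetag1$ furnishes $\bU = (\CA_{\gamma_{*}}(\lambda)\CF_\lambda\bF,\, \CB_{\gamma_{*}}(\lambda)\CF_\lambda\bF,\, \CC_{\gamma_{*}}(\lambda)\CF_\lambda\bF) \in \CD_q(\CA)$ solving $\lambda\bU - \CA\bU = \bF$ (the two boundary conditions defining $\CD_q(\CA)$ being exactly the homogeneous versions of the third and fourth lines of \eqref{r9}), and the stated uniqueness makes $(\lambda I - \CA)^{-1} = \bU$ well defined. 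Next I would turn the $\CR$-bounds of Theorem~\ref{thm:general}$\thetag2$ with $n = 0$ into the pointwise estimate \eqref{resolvent es}: $\CR$-boundedness of a family implies its uniform operator-norm boundedness (the single-operator case of Definition~\ref{dfn2}), and since $\CR_\lambda$, $\CS_\lambda$, $\CT_\lambda$ contain precisely $\lambda\rho$, $\lambda\bu$, $\lambda h$ and the top-order derivatives $\nabla^3\rho$, $\nabla^2\bu$, together with $h$ in $W^{3-1/q}_q(\R^N_0)$, summing the three lines of \eqref{rbdd} and using $|\lambda| \ge \lambda_0 \ge 1$ to absorb lower-order terms gives $|\lambda|\,\|\bU\|_{\X_q(\R^N_+)} + \|\bU\|_{\CD_q(\CA)} \le C_r\,\|\bF\|_{\X_q(\R^N_+)}$ for all $\lambda \in \Sigma_{\epsilon, \lambda_0}$.

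With \eqref{resolvent es} in hand, I would fix $\delta \in (0, \pi/2 - \epsilon)$, which is possible because $\epsilon < \pi/2$, and choose $\gamma_0 \ge \lambda_0$ large enough that $\{\lambda \in \C : |\arg(\lambda - \gamma_0)| < \pi/2 + \delta\} \subset \Sigma_{\epsilon, \lambda_0}$; on this shifted sector \eqref{resolvent es} yields $\|(\lambda I - \CA)^{-1}\|_{\CL(\X_q(\R^N_+))} \le C/|\lambda - \gamma_0|$. Observing in addition that $\CD_q(\CA)$ is dense in $\X_q(\R^N_+)$ by a standard approximation argument, the classical generation theorem for analytic semigroups, via the Dunford--Taylor representation $T(t) = \frac{1}{2\pi i}\int_\Gamma e^{\lambda t}(\lambda I - \CA)^{-1}\,d\lambda$ over a contour $\Gamma$ enclosing $\Sigma_{\epsilon, \lambda_0}$, shows that $\CA$ generates an analytic semigroup $\{T(t)\}_{t \ge 0}$ on $\X_q(\R^N_+)$ and (after possibly enlarging $\gamma_0$) supplies the first two estimates, $\|T(t)\bU_0\|_{\X_q(\R^N_+)} \le C e^{\gamma_0 t}\|\bU_0\|_{\X_q(\R^N_+)}$ and $\|\pd_t T(t)\bU_0\|_{\X_q(\R^N_+)} = \|\CA T(t)\bU_0\|_{\X_q(\R^N_+)} \le C e^{\gamma_0 t} t^{-1}\|\bU_0\|_{\X_q(\R^N_+)}$.

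The third estimate follows by writing $\pd_t T(t)\bU_0 = T(t)\CA\bU_0$ for $\bU_0 \in \CD_q(\CA)$ and bounding $\|\pd_t T(t)\bU_0\|_{\X_q(\R^N_+)} \le C e^{\gamma_0 t}\|\CA\bU_0\|_{\X_q(\R^N_+)} \le C e^{\gamma_0 t}\|\bU_0\|_{\CD_q(\CA)}$, the last inequality being the boundedness of $\CA : \CD_q(\CA) \to \X_q(\R^N_+)$ (the components $\rho_*\dv\bu$, $\rho_*^{-1}\DV\{\bS(\bu) - (\gamma_* - \rho_*\kappa\Delta)\rho\bI\}$ and $\bu\cdot\bn$ being controlled in $H^1_q$, $L_q$ and $W^{2-1/q}_q(\R^N_0)$ respectively), which is immediate from the definition of the graph norm together with the trace theorem.

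I expect no genuine obstacle here: once Theorem~\ref{thm:general} is available this is textbook analytic-semigroup theory. The only point meriting a little care is the bookkeeping in the first paragraph, since Theorem~\ref{thm:general} parametrizes its solution operators by the transformed data $\CF_\lambda\bF$ and rests on the trace/extension maps of the proof of Theorem~\ref{main}; one should check that composing with $\bF \mapsto \CF_\lambda\bF$ and with the extension $\CE$ on the $\zeta$-slot does not disturb the exact powers of $\lambda$ entering \eqref{resolvent es}. But in the present setting $\bg = k = 0$, so $\CF_\lambda\bF = (d, \bff, 0, 0, 0, 0, 0, \zeta)$ carries no $\lambda$-weight at all and $\tilde\CE$ merely replaces $\zeta$ by the bounded extension $\CE\zeta$, so this verification is direct.
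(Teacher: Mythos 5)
Your proposal is correct and takes essentially the same approach as the paper: the paper records the resolvent estimate \eqref{resolvent es} as a consequence of Theorem \ref{thm:general} (using the $n=1$ case of Definition \ref{dfn2} to turn the $\CR$-bound into a uniform operator-norm bound, as you do via the ``single-operator case'') and then simply says that ``the generation of an analytic semigroup follows from standard analytic semigroup arguments,'' which is exactly the textbook machinery you spell out. Your additional bookkeeping about $\bg=k=0$ and $\CF_\lambda\bF$ carrying no $\lambda$-weight is a correct and useful elaboration of that one-line appeal.
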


Now we state the maximal $L_p$-$L_q$ regularity estimates for \eqref{linear1}.
Let 
\[
	\CD_{q, p}(\R^N_+) = (\X_q(\R^N_+), \CD_q(\CA))_{1-1/p, p}
\]
with $\|(\rho, \bu, h)\|_{\CD_{q, p} (\R^N_+)}=\|\rho\|_{B^{3-2/p}_{q, p}(\R^N_+)}+\|\bu\|_{B^{2(1-1/p)}_{q, p}(\R^N_+)}+\|h\|_{B^{3-1/q-1/p}_{q, p}(\R^N_0)}$.
Combining Theorem \ref{thm:semi1} with a real interpolation method
(cf. Shibata and Shimizu \cite[Proof of Theorem 3.9]{SS2}), we have
the following result. 

\begin{theorem}\label{thm:semi2}
Let $1 < p, q < \infty$. 
Assume that $\mu, \nu, \kappa, \sigma >0$, and $\gamma_* \in \R$
are constants satisfying \eqref{condi}.
Then for any $(\rho_0, \bu_0, h_0) \in \CD_{q, p} (\R^N_+)$, 
\eqref{linear1}
admits a unique solution $(\rho, \bu, h) = T(t) (\rho_0, \bu_0, h_0)$
possessing the estimate:
\begin{equation}\label{semi2}
\begin{aligned}
	&\|e^{-\gamma t} \pd_t (\rho, \bu, h)\|_{L_p(\R_+, \X_q(\R^N_+))}
	+ \|e^{-\gamma t} (\rho, \bu, h)\|_{L_p(\R_+, \CD_q(\R^N_+))}\\
	&\leq C_{p, q, N, \gamma_0} 
	\|(\rho_0, \bu_0, h_0)\|_{\CD_{q, p} (\R^N_+)}
\end{aligned}
\end{equation}
for any $\gamma \geq \gamma_0$. 
\end{theorem}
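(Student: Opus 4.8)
The plan is to deduce Theorem~\ref{thm:semi2} from the analytic semigroup generated in Theorem~\ref{thm:semi1} together with the classical trace characterization of real interpolation spaces, in the spirit of \cite[Proof of Theorem~3.9]{SS2}. First I would record that, by Theorem~\ref{thm:semi1}, $\bU(t)=T(t)\bU_0$ with $\bU_0=(\rho_0,\bu_0,h_0)$ solves $\pd_t\bU-\CA\bU=0$, $\bU|_{t=0}=\bU_0$, which is exactly \eqref{linear1}. For uniqueness I would note that any solution lying in the asserted class satisfies $e^{-\gamma t}\bU\in L_p(\R_+,\CD_q(\CA))$ and $e^{-\gamma t}\pd_t\bU\in L_p(\R_+,\X_q(\R^N_+))$, hence on each bounded time interval lies in $L_p\cap H^1_p$ and therefore in $C([0,\infty);\CD_{q,p}(\R^N_+))$ by the trace embedding; thus $\bU(0)=\bU_0$, $\bU$ is a strong solution of the abstract Cauchy problem, and $\bU(t)=T(t)\bU_0$ by uniqueness for analytic semigroups. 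It then remains to establish the a priori bound \eqref{semi2}.

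For the a priori bound I would use that $\CA-\gamma_0$ generates a bounded analytic semigroup $e^{-\gamma_0 t}T(t)$ on $\X_q(\R^N_+)$ and invoke the characterization of the real interpolation space: for $\bU_0\in\CD_{q,p}(\R^N_+)=(\X_q(\R^N_+),\CD_q(\CA))_{1-1/p,p}$,
\[
\int_0^1\bigl\|\CA T(t)\bU_0\bigr\|_{\X_q(\R^N_+)}^p\,dt\le C\,\|\bU_0\|_{\CD_{q,p}(\R^N_+)}^p,
\]
the left side being, together with $\|\bU_0\|_{\X_q(\R^N_+)}$, an equivalent norm on $\CD_{q,p}(\R^N_+)$; here one uses $(\X_q,\CD_q(\CA))_{1-1/p,p}=(\X_q,\CD_q(\CA-\gamma_0))_{1-1/p,p}$ and that $e^{-\gamma_0 t}$ is bounded above and below on $(0,1)$. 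Since $\pd_tT(t)\bU_0=\CA T(t)\bU_0$, this controls the integral over $(0,1)$ of $\|e^{-\gamma t}\pd_t\bU\|_{\X_q(\R^N_+)}^p$. For $t\ge1$ I would write $\CA T(t)\bU_0=(\CA T(1))\,T(t-1)\bU_0$ and use $\CA T(1)\in\CL(\X_q(\R^N_+))$ together with $\|T(t-1)\bU_0\|_{\X_q(\R^N_+)}\le Ce^{\gamma_0(t-1)}\|\bU_0\|_{\X_q(\R^N_+)}$ (both from Theorem~\ref{thm:semi1}), so that the weighted integral over $(1,\infty)$ is finite for $\gamma\ge\gamma_0$ (enlarging $\gamma_0$ by a fixed amount if a strict inequality is wanted). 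To upgrade the $\X_q(\R^N_+)$-control to the full $\CD_q(\CA)$-control I would apply the resolvent estimate \eqref{resolvent es} with a fixed $\lambda=\lambda_0$ to $\bU=T(t)\bU_0$, giving $\|T(t)\bU_0\|_{\CD_q(\CA)}\le C\bigl(\|T(t)\bU_0\|_{\X_q(\R^N_+)}+\|\CA T(t)\bU_0\|_{\X_q(\R^N_+)}\bigr)$, and estimate both terms on the right as above. Collecting the pieces yields \eqref{semi2}.

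I would also remark briefly that the inhomogeneous version \eqref{linear2} is handled by taking the Laplace transform in $t$, which reduces it to the resolvent problem \eqref{r9} with $\lambda=\gamma+i\tau$, and then by Weis's operator-valued Fourier multiplier theorem \cite{W}: for $\gamma\ge\lambda_0$ and $\epsilon$ close to $\pi/2$ so that $\gamma+i\R\subset\Sigma_{\epsilon,\lambda_0}$, the $\CR$-boundedness of $\{(\tau\pd_\tau)^n\CR_\lambda\CA_{\gamma_*}(\lambda)\}$, $\{(\tau\pd_\tau)^n\CS_\lambda\CB_{\gamma_*}(\lambda)\}$, $\{(\tau\pd_\tau)^n\CT_\lambda\CC_{\gamma_*}(\lambda)\}$ from Theorem~\ref{thm:general} and the UMD property of $L_q(\R^N_+)$ give maximal $L_p$-$L_q$ regularity for \eqref{linear2}, the initial data being then incorporated through the trace/right-inverse argument above. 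I would present the interpolation route as the main argument and keep this remark short.

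The step I expect to be the main obstacle is the bookkeeping in the a priori estimate: justifying the interpolation-space characterization for the concrete operator $\CA$ with its boundary conditions and the stated Besov-product norm on $\CD_{q,p}(\R^N_+)$, splitting the time integral at $t=1$ and absorbing the tail with the exponential weight, and passing from the $\X_q(\R^N_+)$-norm to the $\CD_q(\CA)$-norm via \eqref{resolvent es}. The uniqueness part, resting on the continuity up to $t=0$ of maximal-regularity solutions and on uniqueness for the analytic semigroup, should be routine.
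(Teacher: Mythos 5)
Your proposal is correct and follows essentially the same route as the paper, which invokes "a real interpolation method (cf.\ Shibata and Shimizu \cite[Proof of Theorem~3.9]{SS2})" combined with Theorem~\ref{thm:semi1} without giving details. You have supplied the standard trace-method characterization of $(\X_q,\CD_q(\CA))_{1-1/p,p}$ for the rescaled bounded analytic semigroup $e^{-\gamma_0 t}T(t)$, the splitting of the time integral at $t=1$ with the smoothing property to handle the tail, and the passage from $\X_q$-control of $\CA T(t)\bU_0$ to $\CD_q(\CA)$-control via \eqref{resolvent es}, all of which is exactly what the cited argument does; the closing remark about the inhomogeneous problem \eqref{linear2} is superfluous here, as that is the content of the separate Theorem~\ref{thm:mr}.
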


Second, we consider \eqref{linear2}.
To prove the maximal $L_p$-$L_q$ regularity for \eqref{linear2}, 
the key tool is the Weis's
operator valued Fourier multiplier theorem.
Let $\CD(\BR,X)$ and $\CS(\BR,X)$ be the set of all $X$ 
valued $C^{\infty}$ functions having compact support 
and the Schwartz space of rapidly decreasing $X$ 
valued functions, respectively,
while $\CS'(\BR,X)= \CL(\CS(\BR),X)$. 
Given $M \in L_{1,\rm{loc}}(\BR \backslash \{0\}, \CL(X, Y))$, 
we define the operator $T_{M} : \CF^{-1} \CD(\BR,X)\rightarrow \CS'(\BR,Y)$ 
by
\begin{align}\label{eqTM}
T_M \phi=\CF^{-1}[M\CF[\phi]],\quad (\CF[\phi] \in \CD(\BR,X)). 
\end{align}

\begin{theorem}[Weis \cite{W}]\label{Weis}
Let $X$ and $Y$ be two UMD Banach spaces and $1 < p < \infty$. 
Let $M$ be a function in $C^{1}(\BR \backslash \{0\}, \CL(X,Y))$ such that 
\begin{align*}
	&\CR_{\CL(X,Y)} (\{M(\tau) \mid
 	\tau \in \BR \backslash \{0\}\}) \leq \beta_1 < \infty,\\
	&\CR_{\CL(X,Y)} (\{\tau M'(\tau) \mid
 	\tau \in \BR \backslash \{0\}\}) \leq \beta_2 < \infty
\end{align*}
with some constants $\beta_1$ and $\beta_2$. 
Then, the operator $T_{M}$ defined in \eqref{eqTM} 
is extended to a bounded linear operator from
 $L_{p}(\BR,X)$ into $L_{p}(\BR,Y)$. 
Moreover, denoting this extension by $T_{M}$, we have 
\begin{align*}
\|T_{M}\|_{\CL(L_p(\BR,X),L_p(\BR,Y))} \leq C(\beta_1+\beta_2)
\end{align*}
for some positive constant $C$ depending on $p$, $X$ and $Y$. 
\end{theorem}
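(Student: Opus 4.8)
The plan is to prove Theorem~\ref{Weis} along the classical lines of Weis's argument: combine a dyadic Littlewood--Paley decomposition of the multiplier with a randomization that converts the $\CR$-bounds on $M$ and $\tau M'$ into genuine $L_p$-operator bounds. This is the operator-valued analogue of Bourgain's vector-valued Marcinkiewicz multiplier theorem, and the $\mathrm{UMD}$ hypothesis on $X$ and $Y$ is consumed exactly where vector-valued square-function (Littlewood--Paley) estimates are needed. First I would fix a smooth dyadic partition of unity $\{\varphi_j\}_{j\in\BZ}$ subordinate to the annuli $I_j=\{2^{j-1}<|\tau|<2^{j+1}\}$, with $\sum_{j}\varphi_j\equiv 1$ on $\BR\setminus\{0\}$, together with a fatter family $\{\tilde\varphi_j\}$ satisfying $\tilde\varphi_j\equiv 1$ on $\operatorname{supp}\varphi_j$ and $\operatorname{supp}\tilde\varphi_j\subset I_{j-1}\cup I_j\cup I_{j+1}$. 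It suffices to establish the a priori bound $\|T_M\phi\|_{L_p(\BR,Y)}\le C(\beta_1+\beta_2)\|\phi\|_{L_p(\BR,X)}$ for $\phi$ in the dense class $\CF^{-1}\CD(\BR,X)$, after which $T_M$ extends by density; since $\|M(\tau)\|_{\CL(X,Y)}\le\beta_1$, all multiplier operators below are well defined on this class.

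The heart of the matter is to control, uniformly in $j$, the oscillation of $M$ on $I_j$ in terms of the $\CR$-bounds. Writing $\tau_j=2^j\operatorname{sgn}\tau$ and using the fundamental theorem of calculus,
\[
M(\tau)=M(\tau_j)+\int_{\tau_j}^{\tau}M'(s)\,ds=M(\tau_j)+\int_{\tau_j}^{\tau}\bigl(sM'(s)\bigr)\,\frac{ds}{s}\qquad(\tau\in I_j),
\]
and since $\bigl|\int_{\tau_j}^{\tau}\tfrac{ds}{s}\bigr|\le\log 2$ on $I_j$, the integral term is at most $\log 2$ times an average of the family $\{sM'(s):s\neq 0\}$. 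Hence, after splitting into positive and negative frequency halves, each block $M\varphi_j$ is the product of a scalar Mikhlin-type multiplier $\varphi_j$ with an operator lying in the closed absolutely convex $\CR$-bounded hull $\CT$ of $\{M(\tau)\mid\tau\neq 0\}\cup\{\tau M'(\tau)\mid\tau\neq 0\}$, with $\CR_{\CL(X,Y)}(\CT)\le C(\beta_1+\beta_2)$ by the permanence properties of $\CR$-boundedness under unions, absolutely convex hulls and averaging. Now I would write $T_M\phi=\sum_j T_{\tilde\varphi_j}\bigl(T_{M\varphi_j}\phi\bigr)$, apply the two-sided randomized Littlewood--Paley equivalence valid in the $\mathrm{UMD}$ space $Y$,
\[
\|g\|_{L_p(\BR,Y)}\sim\Bigl(\int_0^1\Bigl\|\sum_j r_j(u)\,T_{\tilde\varphi_j}g\Bigr\|_{L_p(\BR,Y)}^p\,du\Bigr)^{1/p},
\]
then Kahane's contraction principle together with the block representation to pass to a Rademacher sum in $X$ weighted by operators from $\CT$, then the defining inequality of $\CR$-boundedness of $\CT$, and finally the Littlewood--Paley equivalence in $X$ applied to $\phi$. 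Tracking the constants yields $\|T_M\phi\|_{L_p(\BR,Y)}\le C(\beta_1+\beta_2)\|\phi\|_{L_p(\BR,X)}$ with $C=C(p,X,Y)$.

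The main obstacle is the vector-valued Littlewood--Paley step: the two-sided randomized square-function estimate on $L_p(\BR,Z)$ for a $\mathrm{UMD}$ space $Z$ is itself a substantial theorem (Bourgain), resting on the boundedness of the Hilbert transform on $L_p(\BR,Z)$ and a Marcinkiewicz-type argument, and it is here that both UMD hypotheses are genuinely used. The second delicate point is the block-representation bookkeeping --- making precise the heuristic that $T_{\tilde\varphi_j}T_{M\varphi_j}$ behaves like a Littlewood--Paley projection composed with a fixed operator drawn from $\CT$ --- which requires keeping the finite-overlap constant of $\{\tilde\varphi_j\}$ and the $s$-averaging uniform in $j$ so that no $j$-dependent factors survive. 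The remaining ingredients (Kahane's inequality to move between $L_2$ and $L_p$ norms of Rademacher sums, the permanence properties of $\CR$-bounds, and the density/extension step for $T_M$) are routine.
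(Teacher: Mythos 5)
The paper does not prove Theorem~\ref{Weis}: it is quoted verbatim from Weis \cite{W} and used as a black box in the maximal-regularity argument of Section~\ref{mr}, so there is no internal proof to compare your sketch against. Taking your proposal on its own terms, it follows the standard route (dyadic Littlewood--Paley decomposition with UMD hypotheses feeding the randomized square-function estimates, fundamental-theorem-of-calculus control of $M$ across dyadic annuli, permanence of $\CR$-boundedness under absolutely convex hulls), which is indeed the right road.

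There is, however, one load-bearing claim stated loosely enough to be false as written. You assert that each block $M\varphi_j$ is ``the product of a scalar Mikhlin-type multiplier $\varphi_j$ with an operator lying in the closed absolutely convex $\CR$-bounded hull $\CT$.'' That cannot be literally true, since $M(\tau)$ genuinely varies over the annulus $I_j$; if $M\varphi_j$ were such a product, $M$ would be piecewise constant. The pointwise observation that $M(\tau)\in\CT$ for each $\tau\in I_j$ is already contained in the hypothesis $\CR(\{M(\tau)\})\le\beta_1$ and by itself buys nothing. What the FTC step must actually deliver is an \emph{integral} representation of the form
\begin{equation*}
M(\tau)\varphi_j(\tau)=M(\tau_j)\varphi_j(\tau)+\int_{I_j}\bigl(sM'(s)\bigr)\,\mathbf{1}_{[\tau_j,\tau]}(s)\,\varphi_j(\tau)\,\frac{ds}{s},
\end{equation*}
in which, for each \emph{fixed} $s$, the operator coefficient $sM'(s)$ is constant and the scalar factor $\mathbf{1}_{[\tau_j,\tau]}(s)\varphi_j(\tau)$ is a legitimate Littlewood--Paley-type multiplier in $\tau$. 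It is the Fubini/Minkowski interchange---pulling the $ds$-integral outside the Fourier-multiplier operator in $\tau$---that lets you apply the defining inequality of $\CR$-boundedness with a fixed operator for each $s$, uniformly in $j$ because the total measure $\int_{I_j}\,ds/|s|$ is $\log 4$. You do flag this as the ``delicate point,'' but the correct resolution is this integral representation, not a hull-membership statement; as sketched, the passage from an $\CR$-bounded symbol to $\CR$-bounded operators acting on Littlewood--Paley blocks is a genuine gap. The remaining ingredients (Bourgain's two-sided randomized square-function estimate in UMD spaces, Kahane's contraction principle, density extension) are as you describe.
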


To state the maximal $L_p$-$L_q$ regularity for \eqref{linear2},
we introduce the following notation:
\begin{align*}
	\CF_{p, q, \gamma}
	=\{(d, \bff, \bg, k, \zeta) \mid &d \in L_{p, \gamma, 0}(\R, H^1_q(\R^N_+)), \enskip
	\bff \in L_{p, \gamma, 0}(\R, L_q(\R^N_+)^N),\\
	&\bg \in L_{p, \gamma, 0}(\R, H^1_q(\R^N_+)^N) \cap H^{1/2}_{p, \gamma, 0}(\R, L_q(\R^N_+)^N), \\
	&k \in H^1_{p, \gamma, 0}(\R, L_q(\R^N_+)) \cap L_{p, \gamma, 0}(\R, H^2_q(\R^N_+)), \\
	&\zeta \in L_{p, \gamma, 0}(\R, W^{2-1/q}_q(\R^N_0))\}
\end{align*}
with the norm
\begin{align*}
	&\|(d, \bff, \bg, k, \zeta)\|_{\CF_{p, q, \gamma}}\\
	&\enskip=\|e^{-\gamma t} d \|_{L_p(\R, H^1_q(\R^N_+))} 
	+ \|e^{-\gamma t} (\bff, \Lambda^{1/2}_\gamma \bg) \|_{L_p(\R, L_q(\R^N_+))}
	+ \|e^{-\gamma t} \bg \|_{L_p(\R, H^1_q(\R^N_+))}\\
	&\enskip+ \|e^{-\gamma t} (\pd_t k, \Lambda^{1/2}\nabla k)\|_{L_p(\R, L_q(\R^N_0))}
	+ \|e^{-\gamma t} k \|_{L_p(\R, H^2_q(\R^N_0))}
	+ \|e^{-\gamma t} \zeta \|_{L_p(\R, W^{2-1/q}_q(\R^N_0))}.
\end{align*}
\begin{theorem}\label{thm:mr} 
Let $1 < p, q < \infty$.
Assume that $\mu, \nu, \kappa, \sigma >0$, and $\gamma_* \in \R$
are constants satisfying \eqref{condi}. 
Then
there exists a constant $\gamma_1 \ge 1$ such that for any $(d, \bff, \bg, k, \zeta) \in \CF_{p, q, \gamma}$,
%\begin{align*}
%	&d \in L_{p, \gamma_0, 0}(\R, H^1_q(\R^N_+)), \quad
%	\bff \in L_{p, \gamma_0, 0}(\R, L_q(\R^N_+)^N),\\
%	&\bg \in L_{p, \gamma_0, 0}(\R, H^1_q(\R^N_+)^N) \cap H^{1/2}_{p, \gamma_0, 0}(\R, L_q(\R^N_+)^N), \\
%	&k \in H^1_{p, \gamma_0, 0}(\R, L_q(\R^N_+)) \cap L_{p, \gamma_0, 0}(\R, H^2_q(\R^N_+)), \\
%	&\eta \in L_{p, \gamma_0, 0}(\R, W^{2-1/q}_q(\R^N_0)),
%\end{align*}
\eqref{linear2} admits a unique solution 
$(\rho, \bu, h)$ with 
\begin{align*}
	&\rho \in L_{p, \gamma_1, 0}(\R, H^3_q(\R^N_+)) \cap H^1_{p, \gamma_1, 0}(\R, H^1_q(\R^N_+)), \\
	&\bu \in L_{p, \gamma_1, 0}(\R, H^2_q(\R^N_+)^N) \cap H^1_{p, \gamma_1, 0}(\R, L_q(\R^N_+)^N),\\
	&h \in L_{p, \gamma_1, 0}(\R, W^{3-1/q}_q(\R^N_0)) \cap H^1_{p, \gamma_1, 0}(\R, W^{2-1/q}_q(\R^N_0)),
\end{align*}
possessing the estimate
\begin{equation}\label{est:mr}
\begin{aligned}
	&\|e^{-\gamma t} \pd_t \rho\|_{L_p(\R, H^1_q(\R^N_+))} 
	+ \sum_{j=0}^3 \|e^{-\gamma t} \Lambda^{j/2} \rho\|_{L_p(\R, H^{3-j}_q(\R^N_+))}\\
	&+ \|e^{-\gamma t} \pd_t \bu\|_{L_p(\R, L_q(\R^N_+))}
	+ \sum_{j=0}^2 \|e^{-\gamma t} \Lambda^{j/2} \bu\|_{L_p(\R, H^{2-j}_q(\R^N_+))}\\
	&+ \|e^{-\gamma t} \pd_t h\|_{L_p(\R, W^{2-1/q}_q(\R^N_0))}
	+ \|e^{-\gamma t} h\|_{L_p(\R, W^{3-1/q}_q(\R^N_0))}\\
	&\leq C\|(d, \bff, \bg, k, \zeta)\|_{\CF_{p, q, \gamma}}
\end{aligned}
\end{equation}
for any $\gamma \ge \gamma_1$ with some constant $C$ depending 
on $N$, $p$, $q$, and $\gamma_1$. 
\end{theorem}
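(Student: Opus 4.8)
The plan is to reduce \eqref{linear2} to the resolvent problem \eqref{r9} by the Laplace transform in $t$, and then to transfer the $\CR$-boundedness provided by Theorem \ref{thm:general} into the $L_p$-in-time estimate \eqref{est:mr} via Weis's operator-valued Fourier multiplier theorem (Theorem \ref{Weis}); this is the now-standard scheme for maximal regularity (cf. \cite{SS2}). Set $\gamma_1:=\lambda_0$, where $\lambda_0\ge1$ is the constant of Theorem \ref{thm:general}. Then for every $\gamma\ge\gamma_1$ and every $\tau\in\R$ the point $\lambda=\gamma+i\tau$ lies in $\Sigma_{\epsilon,\lambda_0}$, since $\Re\lambda=\gamma\ge\lambda_0$ and $|\arg\lambda|<\pi/2<\pi-\epsilon$. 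Throughout I use the $\gamma$-shifted transform $\CL_\gamma[f](\tau):=\CF_t[e^{-\gamma t}f](\tau)=\CL[f](\gamma+i\tau)$, under which $e^{-\gamma t}\pd_t f$ corresponds to multiplication of $\CL_\gamma[f]$ by $\lambda=\gamma+i\tau$; since $L_q(\R^N_+)$, $H^m_q(\R^N_+)$ and $W^s_q(\R^N_0)$ with $1<q<\infty$ are UMD spaces, Theorem \ref{Weis} applies to operator families on them.

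First I would construct the solution. Given $\bF=(d,\bff,\bg,k,\zeta)\in\CF_{p,q,\gamma}$, introduce the data vector
\[
	\mathbf G:=\bigl(d,\ \bff,\ \nabla\bg,\ \Lambda^{1/2}_\gamma\bg,\ \nabla^2k,\ \nabla\Lambda^{1/2}_\gamma k,\ \pd_t k,\ \zeta\bigr),
\]
which, by the very definition of $\CF_{p,q,\gamma}$, satisfies $e^{-\gamma t}\mathbf G\in L_p(\R,\CX_q(\R^N_+))$ with $\|e^{-\gamma t}\mathbf G\|_{L_p(\R,\CX_q(\R^N_+))}\le C\|\bF\|_{\CF_{p,q,\gamma}}$, and $\CL_\gamma[\mathbf G](\tau)=\CF_\lambda\bigl(\CL_\gamma[\bF](\tau)\bigr)$ with $\lambda=\gamma+i\tau$. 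Define
\[
	\rho=\CL_\gamma^{-1}\bigl[\CA_{\gamma_*}(\lambda)\CL_\gamma[\mathbf G]\bigr],\qquad
	\bu=\CL_\gamma^{-1}\bigl[\CB_{\gamma_*}(\lambda)\CL_\gamma[\mathbf G]\bigr],\qquad
	h=\CL_\gamma^{-1}\bigl[\CC_{\gamma_*}(\lambda)\CL_\gamma[\mathbf G]\bigr].
\]
Since for each $\tau$ the triple of values of $\CA_{\gamma_*}(\lambda),\CB_{\gamma_*}(\lambda),\CC_{\gamma_*}(\lambda)$ at $\CL_\gamma[\mathbf G](\tau)$ solves \eqref{r9} with right member $\CL_\gamma[\bF](\tau)$ (Theorem \ref{thm:general}, first assertion), and since $\CL_\gamma$ intertwines $\pd_t$ with multiplication by $\lambda$, this triple solves \eqref{linear2}. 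To make this rigorous one first establishes \eqref{est:mr} for data in a dense subclass (for instance $C^\infty_0$ in $t$ and $x$, vanishing near $t=0$), where all inverse transforms converge absolutely and membership in the regularity class is immediate, and then passes to the general case by density, using \eqref{est:mr} to produce a Cauchy sequence.

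The core is the estimate \eqref{est:mr}. Reading off from \eqref{linear2} that $\pd_t\rho$ and $\pd_t h$ correspond to $\lambda\rho$ and $\lambda h$, one writes each summand on the left of \eqref{est:mr} as $T_M$ applied to $e^{-\gamma t}\mathbf G$, where $M(\tau)$ is the corresponding entry of $\CR_\lambda\CA_{\gamma_*}(\lambda)$, $\CS_\lambda\CB_{\gamma_*}(\lambda)$ or $\CT_\lambda\CC_{\gamma_*}(\lambda)$ (for instance $\nabla^3\CA_{\gamma_*}$, $\lambda^{1/2}\nabla^2\CA_{\gamma_*}$, $\lambda\CA_{\gamma_*}$; $\nabla^2\CB_{\gamma_*}$, $\lambda^{1/2}\nabla\CB_{\gamma_*}$, $\lambda\CB_{\gamma_*}$; $\CC_{\gamma_*}$, $\lambda\CC_{\gamma_*}$), evaluated at $\lambda=\gamma+i\tau$. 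Because $M(\tau)=\mathcal M(\gamma+i\tau)$ for the holomorphic family $\mathcal M(\lambda)$ in question, one has $\tau M'(\tau)=\bigl[(\tau\pd_\tau)\mathcal M\bigr](\gamma+i\tau)$, so the two hypotheses of Theorem \ref{Weis} — $\CR$-boundedness of $\{M(\tau)\}$ and of $\{\tau M'(\tau)\}$ — are exactly \eqref{rbdd} with $n=0$ and $n=1$; hence each $T_M$ is bounded on the relevant $L_p(\R,\cdot)$, with bound $\le Cr$, and summing gives \eqref{est:mr}. The terms of mixed order that do not appear literally among the bundled operators (e.g. $\Lambda^{1/2}_\gamma\nabla\rho$ in $L_p(L_q)$) are obtained by invoking the equations \eqref{linear2} themselves — for instance $\pd_t\rho=d-\rho_*\dv\bu$ converts time-fractional derivatives of $\rho$ into $d$ and space derivatives of $\bu$ that are already controlled — together with the mixed-derivative embedding $L_p(\R,H^s_q)\cap H^r_p(\R,H^{s'}_q)\hookrightarrow H^{\theta r}_p(\R,H^{(1-\theta)s+\theta s'}_q)$ for $\theta\in[0,1]$, valid in the UMD setting.

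For uniqueness, if $(\rho,\bu,h)$ belongs to the stated regularity class and solves \eqref{linear2} with $(d,\bff,\bg,k,\zeta)=0$, then $e^{-\gamma t}(\rho,\bu,h)\in L_p(\R,\cdot)$ for $\gamma$ large, its $\CL_\gamma$-transform solves \eqref{r9} with zero right member for every $\tau$, and the uniqueness part of Theorem \ref{thm:general} forces $\CL_\gamma[(\rho,\bu,h)](\tau)=0$ for all $\tau$, whence $(\rho,\bu,h)=0$. I expect the main obstacle to be the bookkeeping in the estimate step: identifying every summand on the left of \eqref{est:mr} with a Fourier multiplier whose symbol and whose logarithmic derivative $\tau\pd_\tau$ are $\CR$-bounded — immediate from \eqref{rbdd} for the critical derivatives but requiring the reductions via \eqref{linear2} and the mixed-derivative theorem for the intermediate ones — together with the density/limiting argument needed to certify the inverse Laplace transform as a genuine solution in the claimed class.
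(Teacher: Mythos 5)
Your plan follows the same route as the paper: Laplace transform to reduce \eqref{linear2} to the resolvent problem \eqref{r9}, invoke Theorem \ref{thm:general} for the solution operators, transfer the $\CR$-bounds \eqref{rbdd} to $L_p$-in-time bounds via Theorem \ref{Weis}, and read off uniqueness from the resolvent uniqueness. One step you do not address, which the paper carries out explicitly, is verifying that the constructed $\bU=(\rho,\bu,h)$ vanishes for $t<0$ and hence actually satisfies the zero initial condition of \eqref{linear2}: the paper does this with the auxiliary estimate $\gamma\|\bU\|_{L_p((-\infty,0),\X_q)}\le\gamma\|e^{-\gamma t}\bU\|_{L_p(\R,\X_q)}\le C\|\bF\|_{\CF_{p,q,\gamma_1}}$ and then lets $\gamma\to\infty$; you should add this to your density/limiting step, since it is not automatic from Weis's theorem. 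A minor difference: you take $\gamma_1=\lambda_0$, whereas the paper takes $\gamma_1>\max\{\lambda_0,\gamma_0\}$ because its uniqueness argument passes through the semigroup result Theorem \ref{thm:semi2}; your direct Laplace-transform uniqueness argument avoids this and is in fact cleaner. Finally, your flag about the mixed-order summands not literally present in $\CR_\lambda,\CS_\lambda,\CT_\lambda$ is well taken — most of the intermediate cases are recovered using $|\lambda|\ge\lambda_0\ge1$ to trade down powers of $\lambda$, but the $j=3$ term $\Lambda^{3/2}\rho$ must be recast via $\lambda\rho=d-\rho_*\dv\bu$ and the already-controlled $\lambda^{1/2}\nabla\bu$; the paper states \eqref{est:mr} without working any of this out.
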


\begin{proof}
Let $\gamma_1 > \max\{\lambda_0, \gamma_0\}$, where $\lambda_0$ and $\gamma_0$ are
the same constants as in Theorem \ref{thm:general} and Theorem \ref{thm:semi2}, respectively.
%Let $\bF=(d, \bff, \bg, k, \eta) \in \CF_{p, q, \gamma}$.
Let $\bF=(d, \bff, \bg, k, \zeta) \in \CF_{p, q, \gamma}$ for $\gamma \ge \gamma_1$.
Applying Laplace transform to \eqref{linear2} yields
\begin{equation}\label{resolvent}
\left\{
\begin{aligned}
	&\lambda \hat \rho + \rho_* \dv \hat \bu = \hat d & \quad&\text{in $\R^N_+$}, \\
	&\rho_* \lambda \hat \bu - \DV\{\bS(\hat \bu) - (\gamma_* - \rho_* \kappa \Delta) \hat \rho \bI\}=\hat \bff& \quad&\text{in $\R^N_+$},\\
	&\{\bS(\hat \bu) - (\gamma_* - \rho_* \kappa \Delta) \hat \rho \bI\} \bn - \sigma \Delta' \hat h \bn =\hat \bg & \quad&\text{on $\R^N_0$},\\
	&\bn \cdot \nabla \hat \rho = \hat k & \quad&\text{on $\R^N_0$},\\
	&\lambda \hat h - \hat \bu \cdot \bn = \hat \zeta & \quad&\text{on $\R^N_0$},\\
\end{aligned}
\right.
\end{equation}
where we have set $\hat f = \CL[f]$.
Let $\bF_\lambda=(d, \bff, \nabla \bg, \Lambda^{1/2} \bg, \nabla^2 k, \Lambda^{1/2} \nabla k, \pd_t k, \zeta)$.
%Let $\gamma_0 > \lambda_0$, where $\lambda_0$ is the same constant as in Theorem \ref{thm:general}.
Theorem \ref{thm:general} yields that
\begin{equation}\label{sol of resolvent}
	\CL[\bU] = (\hat \rho, \hat \bu, \hat h) = (\CA_{\gamma_*}(\lambda) \widehat \bF_\lambda,
	\CB_{\gamma_*}(\lambda) \widehat \bF_\lambda,
	\CC_{\gamma_*}(\lambda) \widehat \bF_\lambda)
\end{equation}
is a solution of \eqref{resolvent} for $\lambda = \gamma + i\tau \in \C$ with $\gamma \ge \gamma_1$ and $\tau \in \R$.
Here we note that
since $\bF_\lambda=0$ if $t<0$, $\widehat \bF_\lambda$ is holomorphic for $\lambda$ if $\gamma \ge \gamma_1$,
thus the representation
\eqref{sol of resolvent} is independent of $\gamma$ by Cauchy's theorem.
Therefore the solution of \eqref{linear2} has the form:
\begin{equation}\label{sol of linear2}
	\bU=(\rho, \bu, h) = (\CL^{-1}[\CA_{\gamma_*}(\lambda) \widehat \bF_\lambda],
	\CL^{-1}[\CB_{\gamma_*}(\lambda) \widehat \bF_\lambda],
	\CL^{-1}[\CC_{\gamma_*}(\lambda) \widehat \bF_\lambda]).
\end{equation}
Note that the Laplace transform $\CL$ and the Laplace inverse transform $\CL^{-1}$ are written by Fourier transform
$\CF$ and Fourier inverse transform $\CF^{-1}$ in $\R$ as 
\[
\CL[f](\lambda) = \CF[e^{-\gamma t}f(t)](\tau),
\quad \CL^{-1}[g](t) = e^{\gamma t}\CF^{-1}[g(\tau)](t),
\]
where $\lambda = \gamma+i\tau \in \C$. Thus \eqref{sol of linear2} is rewritten as
\[
	e^{-\gamma t}\bU = (\CF^{-1}[\CA_{\gamma_*}(\lambda) \CF[e^{-\gamma t} \bF_\lambda]],
	\CF^{-1}[\CB_{\gamma_*}(\lambda) \CF[e^{-\gamma t} \bF_\lambda]],
	\CF^{-1}[\CC_{\gamma_*}(\lambda) \CF[e^{-\gamma t} \bF_\lambda]]).
\]
By Theorem \ref{thm:general} and Theorem \ref{Weis}, 
we have
\eqref{est:mr} for any $\gamma \ge \gamma_1$.

Next, we verify $(\rho, \bu, h)=(0, 0, 0)$ if $t<0$.
Note that
\[
	\gamma \bU = (\CL^{-1}[(\gamma/\lambda) \lambda \CA_{\gamma_*}(\lambda) \widehat \bF_\lambda],
	\CL^{-1}[(\gamma/\lambda) \lambda \CB_{\gamma_*}(\lambda) \widehat \bF_\lambda],
	\CL^{-1}[(\gamma/\lambda) \lambda \CC_{\gamma_*}(\lambda) \widehat \bF_\lambda]).
\]
Since $|(\tau \pd_\tau)^\ell\gamma/\lambda| \leq 1$ for $\lambda
\in \Sigma_{\epsilon, \lambda_0}$, we have
\begin{align*}
	\gamma \|\bU\|_{L_p((-\infty, 0), \X_q(\R^N_+))}
	&\leq \gamma \|e^{-\gamma t} \bU\|_{L_p(\R, \X_q(\R^N_+))}
	\leq \gamma \|e^{-\gamma_1 t} \bU\|_{L_p(\R, \X_q(\R^N_+))}\\
	&\leq C\|\bF\|_{\CF_{p, q, \gamma_1}}
\end{align*}
for any $\gamma \ge \gamma_1$, which follows from \cite[Proposition 3.6]{DHP}. 
Thus letting $\gamma \to \infty$
yields that $\|\bU\|_{L_p((-\infty, 0), \X_q(\R^N_+))}=0$.
%which implies that $(\rho, \bu, h)=(0, 0, 0)$ if $t<0$. 
By trace theorem in
theory of real interpolation, $\bU \in C(\R, B^{3-2/p}_{q, p}(\R^N_+) \times B^{2(1-1/p)}_{q, p}(\R^N_+) \times B^{3-1/q-1/p}_{q, p}(\R^N_0))$, therefore 
we have $\bU(\cdot, t)=0$ for $t < 0$, which implies that $\bU$ satisfies the initial condition.

Finally, we prove the uniqueness of solutions.  
Let us consider the homogeneous equation:
\begin{equation}\label{homo U}
	\pd_t \bU-\CA \bU=0 \enskip \text{in $\R^N_+ \times \R^N_0$, for $t \in (0, \infty)$},
	\quad
	\bU|_{t=0}=0.
\end{equation}
By Theorem \ref{thm:semi2}, we know that \eqref{homo U} has a solution $\bU$ with
\begin{equation}\label{space U}
	e^{-\gamma_0 t} \bU \in H^1_p((0, \infty), \X_q(\R^N_+)) \cap L_p((0, \infty), \CD_q(\CA)).
\end{equation}
Let $\bV$ be the zero extension of $\bU$ to $t<0$.
Then \eqref{homo U} implies that $\bV$ satisfies  
\[
	\pd_t \bV-\CA \bV=0 \enskip \text{in $\R^N_+ \times \R^N_0$, for $t \in \R$}.
\]
For any $\lambda \in \C$ with $\Re \lambda=\gamma \ge \gamma_1$, 
we set 
\[
	\widehat \bU(\lambda) = \int^\infty_{-\infty} e^{-\lambda t} \bV(t)\,dt
	=\int^\infty_0 e^{-\lambda t} \bU(t)\,dt.
\]
H\"older inequality and \eqref{space U} implies that
\begin{align*}
	\|\widehat \bU(\lambda)\|_{\CD_q(\CA)}
	&\le \left( \int^\infty_0 e^{-(\gamma - \gamma_0)tp'}\,dt \right)^{1/p'}
	\|e^{-\gamma_0 t}\bU\|_{L_p((0, \infty), \CD_q(\CA))}\\
	&=\{(\gamma - \gamma_0)p'\}^{-1/p'} \|e^{-\gamma_0 t}\bU\|_{L_p((0, \infty), \CD_q(\CA))}.
\end{align*}
Since $\lambda \widehat U = \int^\infty_0 e^{-\lambda t}\pd_t \bU\,dt$,
we also have
\begin{align*}
	\|\lambda \widehat \bU(\lambda)\|_{\X_q(\R^N_+)}
	&\le \{(\gamma - \gamma_0)p'\}^{-1/p'} \|e^{-\gamma_0 t} \pd_t\bU\|_{L_p((0, \infty), \X_q(\R^N_+))}.
\end{align*}
Therefore $\widehat \bU \in \CD_q(\CA)$ satisfies the resolvent problem:
\begin{equation}\label{resolvent U}
	\lambda \widehat \bU -\CA \widehat \bU = 0 \enskip \text{in $\R^N_+ \times \R^N_0$}.
\end{equation}
Theorem \ref{thm:general} implies that \eqref{resolvent U} has a unique solution for $\lambda \in \Sigma_{\epsilon, \lambda_0}$, thus we have $\widehat \bU(\lambda)=0$
for any $\lambda \in \C$ with $\gamma \ge \gamma_1$.
Applying the Laplace inverse transform to $\widehat \bU(\lambda)=0$, we have
$\bV(t)=0$ for $t \in \R$. Therefore we have $\bU(t)=0$ for $t>0$, which shows the uniqueness of \eqref{homo U}. 
This completes the proof of Theorem \ref{thm:mr}.
\end{proof}

\section{Conclusion}\label{sec13}

In this article, we consider $\CR$-boundedness of the solution operator families of the generalized resolvent problem for Korteweg type with surface tension in the half-space case. This $\CR$-boundedness can be a potential tool to investigate the maximal $L_p$-$L_q$ regularity using Weis's operator valued multiplier theorem. For future research, it can be analyze the local well-posedness of the model problem based on the result of the paper.

%In some disciplines use of Discussion or 'Conclusion' is interchangeable. It is not mandatory to use both. Please refer to Journal-level guidance for any specific requirements. 

\backmatter

%\bmhead{Supplementary information}

%If your article has accompanying supplementary file/s please state so here. 

%Authors reporting data from electrophoretic gels and blots should supply the full unprocessed scans for key as part of their Supplementary information. This may be requested by the editorial team/s if it is missing.

%Please refer to Journal-level guidance for any specific requirements.

\bmhead{Acknowledgments}

The first author thank to Institute of Research and Community services for the support through International Research Collaboration (IRC)'s scheme 2023 contract number 27.55/UN23.37/PT.01.03/II/2023. The second author is partially supported by JSPS Grants-in-Aid for Early-Career Scientists 21K13819 and Grants-in-Aid for Scientific Research (B) 22H01134.
%Acknowledgments are not compulsory. Where included they should be brief. Grant or contribution numbers may be acknowledged.

%Please refer to Journal-level guidance for any specific requirements.

\section*{Declarations}

\begin{itemize}
%\item Funding
\item {\bf Conflict of interest} There is no conflict of interests in this article 
\item {\bf Ethics approval}  Not applicable
%\item Consent to participate
%\item Consent for publication
%\item Availability of data and materials
%\item Code availability 
\item {\bf Authors' contributions} All authors have been personally and actively involved in substantial work leading to the paper, and will take public responsibility for its content. Moreover, all authors are contributed equally in this work.
\end{itemize}


\begin{thebibliography}{9}
{\small
\bibitem{BDL} D.~Bresch, B.~Desjardins and C.~K.~Lin, 
 {\it On some compressible fluid models: Korteweg, lubrication
and shallow water systems}, 
Comm. Partial Differential Equations,
{\bf 28} (2003) 843--868.
%%%%%%%%%%%%%%%%%%%%
%%%%%%%%%%%%%%%%%%%%%%%%%%%%%%
\bibitem{CK}N.~Chikami and T.~Kobayashi,
{\it Global well-posedness and time-decay estimates of the compressible Navier-Stokes-Korteweg system in critical Besov spaces},
 J. Math. Fluid Mech., {\bf 21} (2019), no. 2, Art. 31.
%%%%%%%%%%%%%%%%%
%%%%%%%%%%%%%%%%%%%%
\bibitem{DD} R.~Danchin, B.~Desjardins, 
 {\it Existence of solutions for compressible fluid models of Korteweg
type}, 
Ann. Inst. Henri Poincare Anal. Nonlinear,
{\bf 18} (2001) 97--133.
%%%%%%%%%%%%%%%%%%%%%%%%%%%%%%%%%%%%%
\bibitem{DHP} R.~Denk, M.~Hieber and J.~Pr\"u\ss, 
{\em $\CR$-boundedness, Fourier multipliers and problems of 
elliptic and parabolic type}. \newblock Memoirs of AMS. Vol 166. 
No. 788. 2003.
%%%%%%%%%%%%%%%%
\bibitem{DS} J.~E.~Dunn and J.~Serrin, 
 {\it On the thermomechanics of interstital working}, 
Arch. Ration. Mech. Anal.,
{\bf 88} (1985) 95--133.
%%%%%%%%%%%%%%
%%%%%%%%%%%%%%%%%%%%%%%%%%%%%%
\bibitem{H} E.~Hanzawa, {\it Classical solutions of the Stefan problem}, Tohoku Math. J. {\bf 33}
(1981) 297–335.
%%%%%%%%%%%%%%
\bibitem{H2011} B.~Haspot, 
 {\it Existence of global weak solution for compressible fluid models of Korteweg type}, 
J. Math. Fluid Mech.,
{\bf 13} (2011) 223--249.
%%%%%%%%%%%%%%%%%
\bibitem{HL1} H.~Hattori, D.~Li,
 {\it Solutions for two dimensional systems for materials of Korteweg type}, SIAM
J. Math. Anal., {\bf 25} (1994) 85--98.
%%%%%%%%%%%%%%%%%%%%
%%%%%%%%%%%%%%%%%%
\bibitem{HL2} H.~Hattori, D.~Li,
 {\it Golobal solutions of a high dimensional systems for Korteweg materials}, 
J. Math. Anal. Appl., {\bf 198} (1996) 84--97.
%%%%%%%%%%%%%%%%%%%%%%%%%%%%%%%%%%%%%%%
\bibitem{HPZ} X.~Hou, H.~Peng and C.~Zhu,
 {\it Global classical solutions to the 3D Navier-Stokes-Korteweg
equations with small initial energy},
Anal. Appl., {\bf 16 (1)} (2018) 55--84.
%%%%%%%%%%%%%%%%%
\bibitem{KSX} S.~Kawashima, Y.~Shibata, J.~Xu,
{\it The $L_p$ energy methods and decay for the compressible Navier–Stokes equations
with capillarity}, J. Math. Pures Appl. {\bf 9 (154)} (2021) 146--184 .
%%%%%%%%%%%%%%%%%%%
\bibitem{KMS} T.~Kobayashi, M.~Murata, and H.~Saito,
 {\it Resolvent estimates for the compressible fluid model of Korteweg type and their application}
J. Math. Fluid Mech., {\bf 24 (1)} (2022) Paper No.12. 
%%%%%%%%%%%%%%%%%%
\bibitem{KT} T.~Kobayashi and K.~Tsuda,
 {\it 
Global existence and time decay estimate of solutions to the compressible Navier-Stokes-Korteweg system under critical condition}.
Asymptot. Anal., {\bf 121 (2)} (2021) 195–-217.
%%%%%%%%%%%%%%%%%
\bibitem{KM} T.~Kobayashi and M.~Murata,
{\it The global well-posedness of the compressible 
fluid model of Korteweg type for the critical case} 
Differ. Integral Equ., {\bf 34(5/6)}, (2021) 245–-264.
%%%%%%%%%%%%%%
\bibitem{K} D.~J.~Korteweg,
 {\it Sur la forme que prennent les \'equations du mouvement des fluides si lfon tient compte des forces capillaires caus\'ees par des variations de densit\'e consid\'erables mais continues et sur la th\'eorie de la capillarite dans lfhypoth\'ese dfune variation continue de la densit\'e},
Archives N\'eerlandaises des sciences exactes et naturelles, (1901) 1--24.
%%%%%%%%%%%%%%%%%%%%%%%%%
%%%%%%%%%%%%%%%%%%%%
\bibitem{K2008} M.~Kotschote, {\it Strong solutions for a compressible fluid model of Korteweg type}, Ann. Inst. H. Poincar\'e Anal. Non
Lin\'eaire {\bf 25} (4) (2008) 679--696.
%%%%%%%%%%%%%%%%%%%
\bibitem{K2010} M.~Kotschote, {\it Strong well-posedness for a Korteweg-type model for the dynamics of a compressible non-isothermal
fluid}, J. Math. Fluid Mech. {\bf 12 (4)} (2010) 473–-484.
%%%%%%%%%%%%%%%%%%%
\bibitem{K2012} M.~Kotschote, {\it Dynamics of compressible non-isothermal fluids of non-Newtonian Korteweg type}, SIAM J. Math. Anal.
{\bf 44}(1), (2012) 74--101.
%%%%%%%%%%%%%%%%%%%%
\bibitem{K2014} M.~Kotschote, {\it Existence and time-asymptotics of global strong solutions to dynamic Korteweg models}, Indiana Univ.
Math. J. {\bf 63} (2014) 21--51.
%%%%%%%%%%%%%%%%%%%%
%%%%%%%%%%%%%%%%%%%%
\bibitem{MS} M.~Murata and Y.~Shibata,
{\it The global well-posedness for the compressible 
fluid model of Korteweg type},
SIAM J. Math. Anal., {\bf 52 (6)} (2020) 6313--6337.
%%%%%%%%%%%%%%%%%%%%%%%%%%%%
\bibitem{S2020} H.~Saito,
{\it On the maximal $L_p$-$L_q$ regularity
for a compressible fluid model
of Korteweg type on general domains},
J. Differential Equations, {\bf 268 (6)} (2020) 2802--2851.
%%%%%%%%%%%%%%%%%%%%%%%%%%%%%%%%
\bibitem{S} H.~Saito, {\it Compressible fluid model of Korteweg Type with free boundary condition: model problem}, 
Funk. Ekvac., {\bf 62} (2019) 337--386. 
%%%%%%%%%%%%%%%%%%%%
\bibitem{SS} Y.~Shibata and S.~Shimizu, {\it On the maximal $L_p$-$L_q$ regularity of the Stokes problem with first order boundary condition; model problems},
J. Math. Soc. Japan, {\bf 64} (2012), 561--626.
Funk. Ekvac., {\bf 62} (2019) 337--386. 
%%%%%%%%%%%%%%%%%%%%
\bibitem{Shi2020} Y.~Shibata,
{\it $\CR$ boundedness, maximal regularity and free boundary problems for the Navier Stokes equations},
Mathematical analysis of the Navier-Stokes equations, 
Lecture Notes in Math., {\bf 2254}, Fond. CIME/CIME Found. Subser., Springer, 
(2020) 193--462. 
%%%%%%%%%%%%%%%%%%%%
\bibitem{S2016} Y.~Shibata, {\it On $\CR$-bounded solution operators in the study of free boundary problem for the Navier-Stokes equations},
Springer Proceedings in Mathematics $\&$ Statistics, vol.183 (Mathematical Fluid Dynamics, Present and Future, Tokyo, 2016), 203--285. 
%%%%%%%%%%%%%%%%%%%%%%%%%%%%

\bibitem{SS2} Y.~Shibata and S.~Shimizu, 
{\it On the $L_p$-$L_q$ maximal regularity of the Neumann 
problem for the Stokes equations in a bounded domain},
J.~Reine Angew. Math. {\bf 615} (2008), 157--209.

%%%%%%%%%%%%%%%%%%%%%%%
%%%%%%%%%%%%%%%%%%%%%%%
%\bibitem{S2016-2} Y.~Shibata, 
%{\it Local well-posedness of free surface problems for the Navier-Stokes equations in a general domain}, Discrete Contin. Dyn. Syst., Ser. S {\bf 9(1)} (2016) 315--342. 
%%%%%%%%%%%%%%%%%%%%%%%%%
\bibitem{TW} Z.~Tan, H.~Q.~Wang,
{\it Large time behavior of solutions to the isentropic 
compressible fluid
models of Korteweg type in $\BR^3$}, 
Commun. Math. Sci.
{\bf 10 (4)} (2012) 1207--1223.
%%%%%%%%%%%%%%%%%%%%%%%%%%%%%%%%% 
%%%%%%%%%%%%%%%%%%%%%%
\bibitem{TWX} Z.~Tan, H.~Q.~Wang, and J.~K.~Xu,
{\it Global existence and optimal $L^2$ decay rate for the strong
solutions to the compressible 
fluid models of Korteweg type}, 
J. Math. Anal. Appl.,
{\bf 390} (2012) 181--187.
%%%%%%%%%%%%%%%%%%%%%%%%%%%%%%%%% 
%%%%%%%%%%%%%%%%%%%%%%
\bibitem{TZ} Z.~Tan and R.~Zhang,
{\it Optimal decay rates of the compressible 
fluid models of Korteweg type}, 
Z. Angew. Math. Phys.
{\bf 65} (2014) 279--300.
%%%%%%%%%%%%%%%%%%%%%%%%%
\bibitem{WT} Y.~J.~Wang, Z.~Tan, {\it Optimal decay rates for the compressible fluid models of Korteweg type}, 
J. Math. Anal. Appl., {\bf 379} (2011) 256--271.
%%%%%%%%%%%%%%%%%%%%%%%%%%%%%%%%%%
\bibitem{Wa}  J.~D.~Van der Waals,
{\it Th\'eorie thermodynamique de la capillarit\'e, dans lfhypoth\'ese dfune variation continue de la densit\'e. }, 
Archives N\'eerlandaises des sciences exactes et naturelles
{\bf XXVIII} (1893) 121--209.
%%%%%%%%%%%%%%%%%%%%%%%%%%%%%%%%% 
\bibitem{W} L.~Weis, 
{\it Operator-valued Fourier multiplier 
theorems and maximal $L_p$-regularity}. 
Math. Ann. {\bf 319} (2001) 735--758.
}\end{thebibliography}
\end{document}